\newtheorem{theorem}{Theorem}[section]
\newtheorem{lemma}[theorem]{Lemma}
\newtheorem{proposition}[theorem]{Proposition}
\newtheorem{corollary}[theorem]{Corollary}
\theoremstyle{remark}
\newtheorem{remark}[theorem]{Remark}
\numberwithin{equation}{section}
\numberwithin{figure}{section}
\numberwithin{table}{section}
\crefname{example}{Example}{Examples}
\crefname{hypothesis}{Hypothesis}{Hypotheses}
\crefname{conj}{Conjecture}{Conjectures}
\newcommand{\R}{\mathbb{R}}
\newcommand{\C}{\mathbb{C}}
\newcommand{\Z}{\mathbb{Z}}
\newcommand{\vsigma}{\varsigma}
\newcommand{\mcalL}{\mathcal{L}}
\newcommand{\vx}{\mathbf{x}}
\newcommand{\rmd}{\mathrm{d}}
\newcommand{\vep}{\varepsilon}
\newcommand{\vphi}{\varphi}
\newcommand{\phiu}[1]{\vphi{(#1)}}
\newcommand{\psihn}[1]{\psi^{\langle #1 \rangle}}
\begin{document}

\title[Optimal error bounds on time-splitting methods for the NLSE]{Optimal error bounds on time-splitting methods for the nonlinear Schr\"odinger equation with low regularity potential and nonlinearity}
 \author[W. Bao]{Weizhu Bao}
 \address{Department of Mathematics, \\
	National University of Singapore, Singapore 119076, Singapore}
\email{matbaowz@nus.edu.sg}

\author[Y. Ma]{Ying Ma}
\address{Department of Mathematics, \\
	School of Mathematics, Statistics and Mechanics, \\ 
	Beijing University of Technology, Beijing 100124, China}
\email{maying@bjut.edu.cn}

\author[C. Wang]{Chushan Wang}
\address{Department of Mathematics, \\
	National University of Singapore, Singapore 119076, Singapore}
\email{e0546091@u.nus.edu}

\begin{abstract}
We establish optimal error bounds on time-splitting methods for the nonlinear Schr\"odinger equation with low regularity potential and typical power-type nonlinearity $ f(\rho) = \rho^\sigma $, where $ \rho:=|\psi|^2 $ is the density with $ \psi $ the wave function and $ \sigma > 0 $ the exponent of the nonlinearity. For the first-order Lie-Trotter time-splitting method, optimal $ L^2 $-norm error bound is proved for $L^\infty$-potential and $ \sigma > 0 $, and optimal $H^1$-norm error bound is obtained for $ W^{1, 4} $-potential and $ \sigma \geq 1/2 $. For the second-order Strang time-splitting method, optimal $ L^2 $-norm error bound is established for $H^2$-potential and $ \sigma \geq 1 $, and optimal $H^1$-norm error bound is proved for $H^3$-potential and $ \sigma \geq 3/2 $ (or $\sigma = 1$). Compared to those error estimates of time-splitting methods in the literature, our optimal error bounds either improve the convergence rates under the same regularity assumptions or significantly relax the regularity requirements on potential and nonlinearity for optimal convergence orders. A key ingredient in our proof is to adopt a new technique called \textit{regularity compensation oscillation} (RCO), where low frequency modes are analyzed by phase cancellation, and high frequency modes are estimated by regularity of the solution. Extensive numerical results are reported to confirm our error estimates and to demonstrate that they are sharp. 
\end{abstract}

\keywords{nonlinear Schr\"odinger equation, low regularity potential, low regularity nonlinearity, time-splitting method, optimal error bound, regularity compensation oscillation (RCO)}

\subjclass{35Q55, 65M15, 65M70, 81Q05}

\thanks{The work of the first author was partially supported by the Ministry of Education of Singapore under its AcRF Tier 2 funding MOE-T2EP20122-0002 (A-8000962-00-00). } 

\maketitle

\section{Introduction}\label{sec1}

In this paper, we consider the following nonlinear Schr\"odinger equation (NLSE)
\begin{equation}\label{NLSE}
	\left\{
	\begin{aligned}
		&i \partial_t \psi(\vx, t) = -\Delta \psi(\vx, t) + V(\vx) \psi(\vx, t) + f(|\psi(\vx, t)|^2) \psi(\vx, t), \quad  \vx \in \Omega, \  t>0, \\
		&\psi(\vx, 0) = \psi_0(\vx), \quad \vx \in \overline{\Omega},
	\end{aligned}
	\right.
\end{equation}
where $t$ is time, $\vx\in \R^d$ ($d=1, 2, 3$) is the spatial coordinate, $ \psi:=\psi(\vx, t) $ is a complex-valued wave function, $ \Omega = \Pi_{i=1}^d (a_i, b_i) \subset \R^d $ is a bounded domain equipped with periodic boundary condition. Here, $ V: \Omega \rightarrow \R $ is a time-independent real-valued potential which is assumed to be purely bounded, and $f$ is the power-type nonlinearity given as 
\begin{equation}\label{eq:f}
	f(\rho) = \beta \rho^\sigma, \quad \rho:=|\psi|^2 \geq 0,
\end{equation}
where $\beta \in \R$ is a given constant and $\sigma > 0$ is the exponent of the nonlinearity.

When $V(\vx) = |\vx|^2 / 2$ and $f(\rho) = \beta \rho$, the NLSE reduces to the cubic NLSE with harmonic potential (or the NLSE with smooth potential and nonlinearity), also known as the Gross-Pitaevskii equation (GPE), which is widely adopted for modeling and simulation in quantum mechanics, nonlinear optics, and Bose-Einstein condensation (BEC) \cite{review_2013, ESY, NLS}. For the GPE, many accurate and efficient numerical methods have been proposed and analyzed in last two decades, including finite difference time domain (FDTD) methods \cite{FD,review_2013,Ant,henning2017}, exponential wave integrators (EWI) \cite{ExpInt,SymEWI,bao2023_EWI}, time-splitting methods \cite{BBD,lubich2008,review_2013,schratz2016,Ant,splitting_low_reg,ji2023,RCO_SE}, 
and, recently, low regularity (resonance based Fourier) integrators (LRI) \cite{LRI,LRI_sinum,LRI_error,LRI_general,LRI_fulldisc,LRI_sec,tree1,tree2} designed for the cubic NLSE with low regularity initial data. 

While the cubic NLSE or GPE is prevalent, diverse physics applications require the incorporation of low regularity potential and nonlinearity into the NLSE \cref{NLSE}. Typical examples include the square-well potential, which is discontinuous, the disorder potential considered in the study of Anderson localization \cite{poten_Josephson,poten_anderson}, and the non-integer power nonlinearity present in the Lee-Huang-Yang correction \cite{LHY} for modeling quantum droplets \cite{QD1,QD2,QD3,QD4}. For more applications involving low regularity potential and nonlinearity, we refer the readers to \cite{bao2023_semi_smooth,bao2023_EWI,henning2017,zhao2021} and references therein. 

Most numerical methods for the cubic NLSE with smooth potential can be extended straightforwardly to solve the NLSE \cref{NLSE} with aforementioned low regularity potential and/or nonlinearity, e.g., the FDTD method \cite{henning2017}, the time-splitting method \cite{ignat2011,choi2021,bao2023_semi_smooth}, the EWI \cite{bao2023_EWI} and the LRI \cite{zhao2021,tree2,bronsard2022}. However, the error analysis of them with low regularity potential and/or nonlinearity is a very subtle and challenging question, which requires new techniques and in-depth analysis. For low regularity potential, the first error estimate concerning $L^\infty$-potential was obtained in \cite{henning2017} for the Crank-Nicolson Galerkin method. Recently, some low regularity integrators \cite{zhao2021,tree2,bronsard2022} are designed, aiming at reducing the regularity requirements on potential and the exact solution at the same time. In these works, low regularity nonlinearity is not taken into account. Very recently, a first-order EWI was analyzed in \cite{bao2023_EWI} for both low regularity potential and nonlinearity.

In terms of time-splitting methods, the error estimates for the NLSE with smooth potential and nonlinearity have been well-established, and we refer to \cite{BBD,lubich2008,review_2013,schratz2016,splitting_low_reg,ji2023} for the details. In the presence of low regularity potential and nonlinearity (especially the non-integer power nonlinearity), the first-order Lie-Trotter time-splitting method was analyzed in \cite{ignat2011,choi2021,bao2023_semi_smooth}. In particular, under the assumption of $H^2$-solution, the first-order $L^2$-norm error bound in time requires $\sigma \geq 1/2$ and $H^2$-potential \cite{bao2023_semi_smooth}. This is essentially due to the Laplacian operator in the commutator bound, which takes two additional derivatives on potential and nonlinearity (as well as the exact solution). When considering low regularity nonlinearity with $0<\sigma<1/2$, only $1/2+\sigma$ order convergence in $L^2$-norm can be proved \cite{bao2023_semi_smooth}. Moreover, for low regularity $L^\infty$-potential, there are no convergence results at any order available. However, for the NLSE to be well-posed in $H^2$, it suffices to assume $L^\infty$-potential and $\sigma>0$ (see \cref{rem:regularity}). Then the expected ``optimal" error bound should be able to provide first-order temporal convergence (and second-order spatial convergence when considering full discretizations) in $L^2$-norm under the assumptions of $L^\infty$-potential, $\sigma>0$ and $H^2$-solution. In other words, the optimal error bound needs to satisfy: (i) the convergence order is optimal with respect to the order of the numerical method and the regularity of the exact solution, and (ii) the regularity requirements on potential and nonlinearity for the optimal convergence order should be optimally weak, i.e., in line with the regularity needed for the well-posedness of the equation. In this aspect, all the aforementioned error estimates of time-splitting methods for low regularity potential and nonlinearity are certainly not ``optimal". However, it is worth noting that the first-order $L^2$-norm error bound in time is observed numerically for $\sigma > 0$ in \cite{bao2023_semi_smooth}, which motivates us to improve the error estimates of time-splitting methods and to establish ``optimal" error bounds. Surprisingly, our improved error bounds are also valid for low regularity $L^\infty$-potential in addition to low regularity nonlinearity. Moreover, the new analysis techniques can also be extended to relax the regularity requirements on potential and nonlinearity for higher-order error bounds on time-splitting methods in the literature.

The main aim of this paper is to improve the error estimates of time-splitting methods and establish optimal error bounds under much weaker regularity assumptions on potential and nonlinearity. Main results are summarized in \cref{sec:main_results}. Compared to the error estimates of time-splitting methods in the literature \cite{BBD,lubich2008,review_2013,choi2021,ignat2011,bao2023_semi_smooth}, to obtain optimal convergence rates, our results significantly relax the regularity requirements on both potential and nonlinearity: 
\begin{enumerate}[(i)]
	\item for the first-order Lie-Trotter time-splitting method (\cref{thm:LT}): 
	\begin{itemize}
		\item optimal $L^2$-norm error bound is proved for $L^\infty$-potential and $\sigma > 0$, which relaxes the assumption of $H^2$-potential and $\sigma \geq 1/2$ in \cite{bao2023_semi_smooth,ignat2011};
		\item optimal $H^1$-norm error bound is obtained for $W^{1,4}$-potential and $\sigma \geq 1/2$, which weakens the requirement of $H^3$-potential and $\sigma \geq 1$ in \cite{bao2023_semi_smooth}. 
	\end{itemize}
	\item for the second-order Strang time-splitting method (\cref{thm:ST}): 
	\begin{itemize}
		\item optimal $L^2$-norm error bound is established for $H^2$-potential and $ \sigma \geq 1 $, which improved the assumption of $H^4$-potential and $ \sigma \geq 3/2$ (or $\sigma=1$) in \cite{BBD,lubich2008,review_2013};
		\item optimal $H^1$-norm error bound is proved for $H^3$-potential and $\sigma \geq 3/2$ (or $\sigma = 1$), which relaxes the need of $H^5$-potential and $\sigma \geq 2$ (or $\sigma=1$) in \cite{review_2013}. 
	\end{itemize}
\end{enumerate}
Roughly speaking, the differentiability requirement on potential is reduced by two orders and that on nonlinearity is reduced by one order (or $1/2$ in terms of $\sigma$). The price to pay is introducing a CFL-type time step size restriction $\tau \lesssim h^2$ where $\tau$ and $h$ are the time step size and mesh size, respectively. Such time step size restriction is natural in terms of the balance between temporal and spatial errors. Moreover, it can be observed numerically when potential is of low regularity, indicating that it cannot be removed or improved! 

Here, we briefly explain the idea of our analysis. As highlighted in \cite{bao2023_semi_smooth}, to obtain the optimal $L^2$-norm error bound for the Lie-Trotter time-splitting method for any $\sigma >0$, one must be able to capture the error cancellation between different steps. To analyze the error cancellation, inspired by a recently developed technique called regularity compensation oscillation (RCO) \cite{RCO_SE}, we truncate the frequency according to the time step size and use summation by parts formula in the estimate of accumulation of dominant local errors. As a result, the Laplacian operator in the commutator, which requires the strongest regularity on potential and nonlinearity, is replaced by a first-order temporal derivative, thereby reducing the differentiability requirement on (time-independent) potential by two orders and on nonlinearity by one order. The same analysis can be naturally generalized to relax the regularity requirements of optimal $H^1$-norm error bound for the first-order Lie-Trotter splitting and the optimal $L^2$- and $H^1$-norm error bounds for the second-order Strang splitting. Actually, the idea of substituting higher order spatial derivatives with lower order temporal derivatives can be traced back to Kato \cite{kato1987}, and has been extensively employed in the analysis of the NLSE from PDE perspectives \cite{cazenave2003,sinum2019}. More recently, this approach has also been adopted in the numerical analysis of the NLSE \cite{bao2023_EWI}. 

The rest of the paper is organized as follows. In Section 2, we present the time-splitting methods and state our main results. Sections 3 and 4 are devoted to the error estimates of the first-order Lie-Trotter time-splitting method and the second-order Strang time-splitting method, respectively. Numerical results are reported in Section 5 to confirm our error estimates. Finally, some conclusions are drawn in Section 6. Throughout the paper, we adopt standard Sobolev spaces as well as their corresponding norms, and denote by $ C $ a generic positive constant independent of the mesh size $ h $ and time step size $ \tau $, and by $ C(\alpha) $ a generic positive constant depending only on the parameter $ \alpha $. The notation $ A \lesssim B $ is used to represent that there exists a generic constant $ C>0 $, such that $ |A| \leq CB $. 

\section{Time-splitting methods and main results}
In this section, we introduce the first-order Lie-Trotter and second-order Strang time-splitting methods to solve the NLSE with low regularity potential and nonlinearity. We also state our main results here. For simplicity of the presentation and to avoid heavy notations, we only carry out the analysis in one dimension (1D) and take $ \Omega = (a, b) $. Generalizations to two dimension (2D) and three dimension (3D) are straightforward. In fact, the only dimension sensitive estimates are the Sobolev embedding and inverse inequalities. In our analysis, we only use the embeddings that hold for $d=1, 2, 3$, and we explicitly show the dependence of dimension in inverse inequalities. 

We define periodic Sobolev spaces as (see, e.g. \cite{bronsard2022}, for the definition in phase space)
\begin{equation*}
	H_\text{per}^m(\Omega) := \{\phi \in H^m(\Omega) : \phi^{(k)}(a) = \phi^{(k)}(b), \ k=0, \cdots, m-1\}, \quad m \geq 1, \quad m \in \mathbb{N}. 
\end{equation*}

The operator splitting techniques are based on a decomposition of the flow of \cref{NLSE}: 
\begin{equation*}
	\partial_t \psi = A(\psi) + B(\psi),
\end{equation*}
where $A (\psi) = i \Delta \psi$ and 
\begin{equation}\label{eq:B}
	B(\psi) = -i\phiu{\psi}\psi := -i (V + f(|\psi|^2)) \psi \quad \text{with} \quad \phiu{\phi} = V + f(|\phi|^2). 
\end{equation}
Then the NLSE \cref{NLSE} can be decomposed into two sub-problems. The first one is
\begin{equation}
	\left\{
	\begin{aligned}
		&\partial_t \psi(x, t) = A(\psi) = i \Delta \psi(x, t), \quad x \in \Omega, \quad t>0, \\
		&\psi(x, 0) = \psi_0(x), \quad x \in \overline{\Omega},
	\end{aligned}
	\right.
\end{equation}
which can be formally integrated exactly in time as
\begin{equation}\label{eq:linear_step}
	\psi(\cdot, t) = e^{i t \Delta} \psi_0(\cdot), \qquad t \geq 0.
\end{equation}
The second one is to solve
\begin{equation}
	\left\{
	\begin{aligned}
		&\partial_t \psi(x, t) = B(\psi) = -iV(\vx)\psi(x, t) -i f(|\psi(x, t)|^2)\psi(x, t), \quad  t>0, \\
		&\psi(x, 0) = \psi_0(x), \quad x \in \overline{\Omega},
	\end{aligned}
	\right.
\end{equation}
which, by noting
$|\psi(x, t)|=|\psi_0(x)|$ for $t \geq 0$, can be integrated exactly in time as
\begin{equation}\label{eq:nonl_step}
	\psi(x, t) = \Phi_B^t (\psi_0)(x) := \psi_0(x)e^{- i t (V(x) + f(|\psi_0(x)|^2))}, \quad x \in \overline{\Omega}, \quad t \geq 0. 
\end{equation}
Different combinations of the linear step \cref{eq:linear_step} and the nonlinear step \cref{eq:nonl_step} will yield different time-splitting schemes. 

\subsection{Lie-Trotter time-splitting Fourier spectral (LTFS) method}
Choose a time step size  $ \tau > 0 $, denote time steps as $ t_n = n \tau $ for $ n = 0, 1, ... $, and let $ \psi^{[n]}(\cdot) $ be the approximation of $ \psi(\cdot, t_n) $ for $ n \geq 0 $. Then a first-order semi-discretization of the NLSE \cref{NLSE} via the Lie-Trotter splitting is given as:
\begin{equation}\label{eq:LT}
	\psi^{[n+1]} = e^{i \tau \Delta} \Phi_B^\tau\left (\psi^{[n]} \right ), \quad n \geq 0, 
\end{equation}
with $ \psi^{[0]}(x) = \psi_0(x) $ for $x\in\overline{\Omega}$. 

Then we further discretize the semi-discrete scheme \cref{eq:LT} in space by the Fourier spectral method to obtain a fully discrete scheme. 

We remark that usually, the Fourier pseudospectral method is used for spatial discretization, which can be efficiently implemented with FFT. However, due to the low regularity of potential and/or nonlinearity, it is very hard to establish error estimates of the Fourier pseudospectral method, and it is impossible to obtain optimal error bounds in space as order reduction can be observed numerically \cite{bao2023_EWI}. 

Choose a mesh size $ h=(b-a)/N $ with $ N $ being a positive even integer and denote grid points as
\begin{equation*}
	x_j = a + jh, \quad j = 0, 1, \cdots, N.
\end{equation*}
Define the index sets
\begin{equation*}
	\mathcal{T}_N = \left\{-\frac{N}{2}, \cdots, \frac{N}{2}-1 \right\}, 
\end{equation*}
and denote
\begin{equation}
	X_N = \text{span}\left\{e^{i \mu_l(x - a)}: l \in \mathcal{T}_N\right\}, \quad \mu_l = \frac{2 \pi l}{b-a}.  
\end{equation}
Let $ P_N:L^2(\Omega) \rightarrow X_N $ be the standard $ L^2 $-projection onto $ X_N $ as 
\begin{equation}
	(P_N u)(x) = \sum_{l \in \mathcal{T}_N} \widehat u_l e^{i \mu_l(x - a)}, 
\end{equation}
where $ u \in L^2(\Omega) $, and $\widehat{u}_l$ are the Fourier coefficients of $u$ defined as 
\begin{equation}
	\widehat{u}_l = \frac{1}{b-a} \int_a^b u(x) e^{-i \mu_l (x-a)} \rmd x, \qquad l \in \Z. 
\end{equation}

Let $ \psi^n(\cdot) $ be the numerical approximations of $ \psi(\cdot, t_n) $ for $ n \geq 0 $. Then the first-order Lie-Trotter time-splitting Fourier spectral method (\textit{LTFS}) reads
\begin{equation}\label{full_discrete_LT}
	\begin{aligned}
		&\psi^{(1)}(x) = e^{- i \tau (V(x) + f(|\psi^n(x)|^2))} \psi^n(x), \\
		&\psi^{n+1}(x) = \sum_{l \in \mathcal{T}_N} e^{- i \tau \mu_l^2} \widehat{(\psi^{(1)})}_l e^{i \mu_l(x-a)}, 
	\end{aligned}
	\quad x \in \Omega, \qquad n \geq 0, 
\end{equation}
where $ \psi^0 = P_N \psi_0 $ in \cref{full_discrete_LT}. 

Let $\mathcal{S}_1^\tau:X_N \rightarrow X_N$ be the numerical integrator associated with the LTFS method: 
\begin{equation}\label{eq:S1S2_S1}
	\mathcal{S}_1^\tau(\phi) := e^{i \tau \Delta} P_N \Phi_B^\tau(\phi), \qquad \phi \in X_N. 
\end{equation}
Then $\psi^n \ (n \geq 0)$ obtained from the LTFS \cref{full_discrete_LT} satisfy
\begin{equation}\label{LTFS}
	\begin{aligned}
		&\psi^{n+1} = \mathcal{S}_1^\tau(\psi^n) = e^{i \tau \Delta} P_N \Phi_B^\tau(\psi^n), \quad n \geq 0, \\
		&\psi^0 = P_N \psi_0. 
	\end{aligned}
\end{equation}

\subsection{Strang time-splitting Fourier spectral (STFS) method}
Similar to the first-order semi-discretization \cref{eq:LT}, we can obtain a second-order semi-discretization via the Strang splitting: 
\begin{equation}\label{eq:ST}
	\psi^{[n+1]} = e^{i \frac{\tau}{2} \Delta} \Phi_B^\tau \left (e^{i \frac{\tau}{2} \Delta} \psi^{[n]} \right ), \quad n \geq 0, 
\end{equation}
with $ \psi^{[0]}(x) = \psi_0(x) $ for $x\in\overline{\Omega}$. 

By using the Fourier spectral method to further discretize \cref{eq:ST}, we get the second-order Strang time-splitting Fourier spectral method (\textit{STFS}): 
\begin{equation}\label{full_discrete_ST}
	\begin{aligned}
		&\psi^{(1)}(x) = \sum_{l \in \mathcal{T}_N} e^{- i \frac{\tau}{2} \mu_l^2} \widehat{(\psi^{n})}_l e^{i \mu_l(x-a)}, \\
		&\psi^{(2)}(x) = e^{- i \tau (V(x) + f(|\psi^{(1)}(x)|^2))} \psi^{(1)}(x), \\
		&\psi^{n+1}(x) = \sum_{l \in \mathcal{T}_N} e^{- i \frac{\tau}{2} \mu_l^2} \widehat{(\psi^{(2)})}_l e^{i \mu_l(x-a)},  
	\end{aligned}
	\quad x \in \Omega, \qquad n \geq 0, 
\end{equation}
where $\psi^0 = P_N \psi_0$ in \cref{full_discrete_ST}. 

Introduce a numerical flow $\mathcal{S}_2^\tau:X_N \rightarrow X_N$ associated with the STFS scheme as 
\begin{equation}\label{eq:S1S2_S2}
	\mathcal{S}_2^\tau(\phi) := e^{i \frac{\tau}{2} \Delta} P_N \Phi_B^\tau \left (e^{i \frac{\tau}{2} \Delta}\phi \right ), \qquad \phi \in X_N. 
\end{equation}
Note that $\psi^n \ (n \geq 0)$ obtained from the STFS \cref{full_discrete_ST} satisfy
\begin{equation}\label{STFS}
	\begin{aligned}
		&\psi^{n+1} = \mathcal{S}_2^\tau(\psi^n) = e^{i \frac{\tau}{2} \Delta} P_N \Phi_B^\tau \left (e^{i \frac{\tau}{2} \Delta}\psi^n \right ), \quad n \geq 0, \\
		&\psi^0 = P_N \psi_0. 
	\end{aligned}
\end{equation}

\subsection{Main results}\label{sec:main_results}
For $ \psi^n \ (n \geq 0) $ obtained from the first-order LTFS method \cref{full_discrete_LT}, we have
\begin{theorem}\label{thm:LT}
	Under the assumptions that $ V \in L^\infty(\Omega) $, $ \sigma > 0 $ and the exact solution $ \psi \in C([0, T]; H^2_\text{\rm per}(\Omega)) \cap C^1([0, T]; L^2(\Omega)) $, there exists $ h_0 > 0 $ sufficiently small such that when $ 0 < h < h_0 $ and $ \tau < h^2/\pi $, we have
	\begin{equation}\label{eq:LT_L2}
		\| \psi(\cdot, t_n) - \psi^n  \|_{L^2} \lesssim \tau + h^2, \quad \| \psi(\cdot, t_n) - \psi^n  \|_{H^1} \lesssim \sqrt{\tau} + h, \quad 0 \leq n \leq T/\tau. 
	\end{equation}
	In addition, if $ V \in W^{1, 4}(\Omega) \cap H^1_\text{\rm per}(\Omega) $, $ \sigma \geq 1/2 $ and the solution $ \psi \in C([0, T]; H^3_\text{\rm per}(\Omega)) \cap C^1([0, T]; H^1(\Omega)) $, we have
	\begin{equation}\label{eq:LT_H1}
		\| \psi(\cdot, t_n) - \psi^n \|_{H^1} \lesssim \tau + h^2, \quad 0 \leq n \leq T/\tau. 
	\end{equation}
\end{theorem}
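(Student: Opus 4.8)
The plan is to split the global error $e^n := \psi(\cdot, t_n) - \psi^n$ into a spatial projection part and a temporal part via the standard triangle inequality $\|e^n\|_X \leq \|\psi(\cdot,t_n) - P_N\psi(\cdot,t_n)\|_X + \|P_N\psi(\cdot,t_n) - \psi^n\|_X$. The first term is controlled by classical projection error estimates: in $H^1$-norm it is $O(h^2)$ using $\psi \in H^3_{\mathrm{per}}$ for the $L^2$-case and $\psi \in H^2_{\mathrm{per}}$ already for the weaker bound, but the sharpened $H^1$-bound needs $H^3$ regularity. The bulk of the work is the second term. Writing $\mathcal{S}_1^\tau$ for the discrete flow and $\Phi^\tau$ for the exact flow of \cref{NLSE}, I would set $e^n_N := P_N\psi(\cdot,t_n) - \psi^n$ and derive the recursion
\begin{equation*}
	e^{n+1}_N = \mathcal{S}_1^\tau(P_N\psi(\cdot,t_n)) - \mathcal{S}_1^\tau(\psi^n) + \mathcal{L}^n,
\end{equation*}
where $\mathcal{L}^n := \mathcal{S}_1^\tau(P_N\psi(\cdot,t_n)) - P_N\psi(\cdot,t_{n+1})$ is the local truncation error. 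Because $e^{i\tau\Delta}$ and $P_N$ are $L^2$- and $H^1$-isometries (respectively, bounded), and because $\Phi_B^\tau$ is locally Lipschitz on bounded sets of $H^1$ (here one uses $V \in W^{1,4} \hookrightarrow W^{1,\infty}$ fails in 1D but $W^{1,4}(\Omega)\hookrightarrow L^\infty$ does, and $\sigma\geq 1/2$ guarantees $f(|\cdot|^2)|\cdot|$ is $H^1$-Lipschitz on bounded sets), the stability step gives $\|\mathcal{S}_1^\tau(P_N\psi(\cdot,t_n)) - \mathcal{S}_1^\tau(\psi^n)\|_{H^1} \leq (1+C\tau)\|e^n_N\|_{H^1}$, contingent on an a priori $H^1$-bound on $\psi^n$ that I would establish by a bootstrap/induction argument together with the CFL condition $\tau < h^2/\pi$ (the latter enters through an inverse inequality to absorb the $e^{i\tau\Delta}$-induced $\mu_l^2$ factors when passing between $H^1$ and $H^2$).

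**Handling the local error via RCO.** The naive estimate $\|\mathcal{L}^n\|_{H^1} = O(\tau^2)$ requires $H^3$-regularity of $\psi$ *and* two extra derivatives on $V$ and $f$ through the commutator $[A,B]$, which is exactly what we want to avoid. Instead, following the regularity-compensation-oscillation idea flagged in the introduction, I would not estimate each $\mathcal{L}^n$ individually but rather the accumulated sum $\sum_{k=0}^{n-1} e^{i(n-k)\tau\Delta}\mathcal{L}^k$. I expand $\mathcal{L}^k$ to first order: the dominant term is (schematically) $-i\tau\int_0^1 e^{i\tau\Delta}\bigl(e^{-is\tau\Delta} - 1\bigr)(\mathcal{N}(\psi(\cdot,t_k)))\,\rmd s$-type expressions where $\mathcal{N}(\psi) = (V + f(|\psi|^2))\psi$, capturing the mismatch between $e^{i\tau\Delta}\Phi_B^\tau$ and the true flow. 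Projecting onto frequencies, I would split $\mathcal{T}_N = \{|\mu_l| \leq \tau^{-1/2}\} \cup \{|\mu_l| > \tau^{-1/2}\}$: on the high-frequency part the factor $\mu_l^{-2}$ (or $\mu_l^{-1}$) gained from $(e^{is\tau\mu_l^2}-1)$ being bounded is compensated by the $H^2$ (resp. $H^3$) regularity of $\psi$, yielding the required $O(\tau)$ after summation; on the low-frequency part $|\mu_l|\leq \tau^{-1/2}$ the phase $e^{i\tau\Delta}$ oscillates slowly, so I apply summation by parts in $k$ to the telescoping-type sum, trading the would-be $O(\tau^2)$-per-step against a single $O(\tau)$ boundary term plus a sum of differences $\mathcal{N}(\psi(\cdot,t_{k+1})) - \mathcal{N}(\psi(\cdot,t_k))$, each of size $O(\tau)$ by the $C^1([0,T];H^1)$-regularity hypothesis (this is precisely where the first-order temporal derivative replaces the Laplacian). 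The point is that $\partial_t\psi$ satisfies \cref{NLSE}, so $\partial_t\mathcal{N}(\psi)$ only costs one spatial derivative on $V$ and $f$, not two — hence $W^{1,4}$-potential and $\sigma \geq 1/2$ suffice.

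**Main obstacle and closing the argument.** The hard part will be making the summation-by-parts estimate on the low-frequency block rigorous in the $H^1$-norm while keeping track of the Lipschitz dependence of $\mathcal{N}$ on $\psi$ versus $\psi^n$ (since the difference of nonlinearities evaluated at exact versus numerical solutions must be reabsorbed into $\|e^k_N\|_{H^1}$, not merely $\|e^k_N\|_{L^2}$), and in verifying that the frequency truncation at $\tau^{-1/2}$ genuinely balances: one needs $\|(I - P_{\tau^{-1/2}})\psi\|_{H^1} \lesssim \tau \|\psi\|_{H^3}$, which is why $H^3$-regularity is the natural threshold for the optimal $H^1$-bound \cref{eq:LT_H1}. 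After establishing $\|\sum_{k} e^{i(n-k)\tau\Delta}\mathcal{L}^k\|_{H^1} \lesssim \tau$ uniformly in $n \leq T/\tau$, I combine it with the stability recursion and apply the discrete Grönwall inequality to get $\|e^n_N\|_{H^1} \lesssim \tau$; adding the $O(h^2)$ projection error and verifying the induction hypothesis on $\|\psi^n\|_{H^1}$ closes the bootstrap (for $h < h_0$ small, the error is smaller than $1$, so $\psi^n$ stays in the ball where all Lipschitz constants are valid). The $W^{1,4}$ rather than $W^{1,\infty}$ requirement on $V$ comes from needing $\|V\phi\|_{H^1} \lesssim \|V\|_{W^{1,4}}\|\phi\|_{H^1}$ via Hölder and the 1D embedding $H^1 \hookrightarrow L^\infty$, and controlling $\|V\,\partial_t\psi\|$-type terms in the summation-by-parts remainder.
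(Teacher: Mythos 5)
Your overall architecture matches the paper's: split off the $P_N$-projection error, write the recursion $e^{n+1}=e^{i\tau\Delta}e^n+Q^n+\mathcal{E}^n$, isolate the dominant local error $\mathcal{E}_2^n=-e^{i\tau\Delta}\int_0^\tau(I-e^{-is\Delta})P_NB(P_N\psi(t_n))\,\rmd s$, estimate the accumulated sum $\sum_k e^{i(n-k)\tau\Delta}\mathcal{E}_2^k$ by summation by parts in $k$ so that the Laplacian is traded for $\partial_t\psi$, and close with discrete Gr\"onwall plus an induction using the inverse inequality $\|\phi\|_{L^\infty}\lesssim h^{-d/2}\|\phi\|_{L^2}$. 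The identification of the first-order time derivative replacing two spatial derivatives on $V$ and $f$ is exactly the paper's mechanism, as is the use of the product estimate $\|Vw\|_{H^1}\lesssim\|V\|_{W^{1,4}}\|w\|_{H^1}$ for the $H^1$ bound.

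The genuine gap is in your treatment of the high-frequency block. First, the frequency truncation must be applied to $B(P_N\psi)=-i(V+f(|P_N\psi|^2))P_N\psi$, which is the argument of $(I-e^{-is\Delta})$ in $\mathcal{E}_2^n$, not to $\psi$ itself; your closing paragraph invokes $\|(I-P_{\tau^{-1/2}})\psi\|_{H^1}\lesssim\tau\|\psi\|_{H^3}$, which controls the wrong object. Second, and more fundamentally, when $V$ is only in $L^\infty$ the product $V\psi$ has no Sobolev regularity beyond $L^2$, so the tail $\|(I-P_{\tau^{-1/2}})B(P_N\psi)\|_{L^2}$ cannot be made $O(\tau)$ by any regularity of the solution: multiplication by a rough potential spreads energy to all frequencies, and this is precisely the obstruction the theorem is designed to circumvent. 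The paper's resolution is that no frequency splitting is performed at all. Under the CFL condition $\tau<h^2/\pi$, every retained frequency satisfies $\tau\mu_l^2<\pi$, so the geometric sums $S_{n,l}=\sum_{k=0}^ne^{ik\tau\mu_l^2}$ obey $|S_{n,l}|\lesssim(\tau\mu_l^2)^{-1}$ for all $0\neq l\in\mathcal{T}_N$ (no resonance $\tau\mu_l^2\approx2\pi$ can occur inside $\mathcal{T}_N$), whence $|\delta_lS_{n,l}|\lesssim\tau$ uniformly in $l$, and the summation-by-parts bound then needs only $\phi^k=P_NB(P_N\psi(t_k))\in L^2$ together with $\|\phi^{k+1}-\phi^k\|_{L^2}\lesssim\tau\|\partial_t\psi\|_{L^\infty([t_k,t_{k+1}];L^2)}$ (respectively the $H^1$ analogues for the second claim). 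In other words, the CFL condition's primary role is to render every discrete frequency ``low'' for the phase-cancellation estimate, not merely to absorb inverse-inequality factors in the induction as you suggest. Once you drop the $\tau^{-1/2}$ splitting and run the summation by parts over the whole of $\mathcal{T}_N$ using the CFL condition, the rest of your outline goes through.
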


For $ \psi^n \ (n \geq 0) $ obtained from the second-order STFS method \cref{full_discrete_ST}, we have
\begin{theorem}\label{thm:ST}
	Under the assumptions that $ V \in H_\text{\rm per}^2(\Omega) $, $ \sigma \geq 1 $ and the exact solution $ \psi \in C([0, T]; H^4_\text{\rm per}(\Omega)) \cap C^1([0, T]; H^2(\Omega)) \cap C^2([0, T]; L^2(\Omega)) $, there exists $ h_0 > 0 $ sufficiently small such that when $ 0 < h < h_0 $ and $ \tau < h^2/\pi $, we have
	\begin{equation}\label{eq:ST_L2}
		\| \psi(\cdot, t_n) - \psi^n \|_{L^2} \lesssim \tau^2 + h^4, \quad \| \psi(\cdot, t_n) - \psi^n \|_{H^1} \lesssim \tau^\frac{3}{2} + h^3, \quad 0 \leq n \leq T/\tau. 
	\end{equation}
	In addition, if $ V \in H^3_\text{\rm per}(\Omega) $, $ \sigma \geq \frac{3}{2} $ (or $\sigma = 1$) and the solution $ \psi \in C([0, T]; H^5_\text{\rm per}(\Omega)) \cap C^1([0, T]; H^3(\Omega)) \cap C^2([0, T]; H^1(\Omega)) $, we have
	\begin{equation}\label{eq:ST_H1}
		\| \psi(\cdot, t_n) - \psi^n \|_{H^1} \lesssim \tau^2 + h^4, \quad 0 \leq n \leq T/\tau. 
	\end{equation}
\end{theorem}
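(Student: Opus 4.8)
The plan is to mirror the structure used for the Lie–Trotter method in Theorem 2.1, carried out at second order, with the regularity compensation oscillation (RCO) technique applied to the dominant local error term of the Strang splitting. I would start from the standard Lady–Windermere fan argument: write the global error $e^n := \psi(\cdot,t_n)-\psi^n$ and split it into the projection error $(I-P_N)\psi(\cdot,t_n)$, which is controlled by $\|\psi(\cdot,t_n)\|_{H^m}\, h^m$ for the appropriate $m$ (here $m=4$ in $L^2$ and $m=3$ in $H^1$, using the $H^4$- resp.\ $H^5$-regularity of $\psi$), and the evolution error $\eta^n := P_N\psi(\cdot,t_n)-\psi^n$. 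For $\eta^n$ I would use the telescoping identity $\eta^{n} = \sum_{k=0}^{n-1} \mathcal{S}_2^\tau \cdots \mathcal{S}_2^\tau \big(\mathcal{R}^k\big)$, where $\mathcal{R}^k := \mathcal{S}_2^\tau(P_N\psi(\cdot,t_k)) - P_N\psi(\cdot,t_{k+1})$ is the local truncation error, together with the $L^2$-stability (and, for the $H^1$ estimate, $H^1$-stability on the discrete space $X_N$, where the CFL condition $\tau<h^2/\pi$ together with an inverse inequality is used to absorb the low regularity of $V$ and $f$) of the numerical flow $\mathcal{S}_2^\tau$ on a ball in $H^2_{\mathrm{per}}$ around the exact solution. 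A standard bootstrap/induction then keeps $\psi^n$ in such a ball.

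The heart of the matter is estimating the accumulated local error $\sum_{k=0}^{n-1}\mathcal{R}^k$. A naïve bound $\|\mathcal{R}^k\|\lesssim\tau^3$ summed over $O(\tau^{-1})$ steps already gives $\tau^2$, but only at the cost of the classical Strang commutator $[A,[A,B]]$ and $[B,[B,A]]$, which involve $\Delta^2 V$, i.e.\ $H^4$-potential, and high powers of $\psi$ forcing $\sigma\ge 3/2$ (or the special algebraic case $\sigma=1$). The RCO idea is instead to Taylor-expand $\mathcal{R}^k$ and isolate the \emph{dominant} $O(\tau^2)$ part, which carries a factor like $[A,B]$ evaluated along the flow; one then decomposes this term in Fourier space into low modes $|\mu_l|\lesssim \tau^{-1/2}$ and high modes $|\mu_l|\gtrsim\tau^{-1/2}$. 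On high modes the extra frequency powers are converted into $H^m$-regularity of $\psi$ (this is where the $H^5$, $H^4$ and $H^3$ hypotheses on $\psi$, and $C^1$/$C^2$ in time, get consumed), giving a small contribution. On low modes one performs summation by parts in $k$: the oscillatory factors $e^{-i\tau\mu_l^2}$ produce phase cancellation, trading one power of $\tau$ (the would-be second spatial derivative $\mu_l^2$) for a discrete time difference of the remaining smooth data; this is exactly the "replace a Laplacian by a first-order time derivative" mechanism, and it is what brings the potential requirement down to $H^2$ (resp.\ $H^3$ for $H^1$-norm) and the nonlinearity down to $\sigma\ge1$ (resp.\ $\sigma\ge3/2$ or $\sigma=1$). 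I expect to need the chain-rule estimates for $\rho\mapsto\rho^\sigma$ composed with $|\psi|^2$ — available for $\sigma\ge1$ in $H^2$ and $\sigma\ge3/2$ in $H^3$ — precisely at this point, and the case $\sigma=1$ handled separately since then $f$ is smooth.

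The main obstacle, and where the second-order analysis genuinely differs from the first-order one, is the bookkeeping of the \emph{second} dominant term in the Strang expansion: unlike Lie–Trotter there are two nested half-steps $e^{i\frac{\tau}{2}\Delta}$, so the local error has the schematic form $\tau^2\,c_1[A,B] + \tau^3(\text{double commutators}) + \dots$, and after the leading symmetric cancellation one must show that the surviving $O(\tau^2)$ piece still admits the frequency splitting and summation-by-parts treatment without requiring more than one temporal derivative of $\psi$ in the low-frequency regime and no more than the stated spatial regularity in the high-frequency regime. Concretely I would (i) derive an exact integral (Duhamel) representation of $\mathcal{R}^k$ via the variation-of-constants formula applied twice, (ii) Taylor-expand the nonlinear phase $\Phi_B^\tau$ and the exponentials to the required order with remainders measured in $L^2$ and $H^1$ using the solution's time regularity, (iii) collect the $O(\tau^2)$ frequency-localized terms and run the RCO argument, and (iv) feed the resulting $O(\tau^2)$ (resp.\ $O(\tau^{3/2})$ in $H^1$ without the extra regularity) bound on $\|\sum_k\mathcal{R}^k\|$ into the stability estimate. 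The $H^1$-with-$O(\tau^2)$ statement under $H^5$-potential-free but $H^5$-solution and $H^3$-potential hypotheses should then follow by running the same argument one derivative up, using $H^1$-stability of $\mathcal{S}_2^\tau$ on $X_N$ and the $W^{1,\infty}$-type control of $V$ and $f(|\psi|^2)$ that $H^3\hookrightarrow W^{1,\infty}$ and $H^2\hookrightarrow W^{1,\infty}$ (in $d\le3$) provide.
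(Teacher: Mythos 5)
Your proposal takes essentially the same route as the paper: projection error plus a Lady--Windermere iteration with $L^\infty$-conditional stability, an induction closed by the inverse inequality under $\tau<h^2/\pi$, and the accumulated dominant local error treated by summation by parts in time against the phases $e^{ik\tau\mu_l^2}$, trading one factor of $\mu_l^2$ for a discrete time difference controlled by $\|\partial_t\psi\|_{H^2}$ (resp.\ $H^3$). The only imprecision is your identification of the dominant piece as an $O(\tau^2)$ term carrying $[A,B]$: after the symmetric cancellation the term the paper must handle by RCO is the Peano-kernel (midpoint-rule) remainder $\tau^3\Delta^2\int_0^1\ker(\theta)e^{i(1-\theta)\tau\Delta}P_NB\bigl(e^{i\theta\tau\Delta}P_N\psi(t_n)\bigr)\,\mathrm{d}\theta$, to which the bound $|\tau\mu_l^2S_{n,l}|\lesssim 1$ is applied --- but carrying out your steps (i)--(iii) leads to exactly this term.
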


\begin{remark}\label{rem:regularity}
	Our regularity assumptions on the exact solution $\psi$ are compatible with the assumptions on potential and nonlinearity. By Corollary 4.8.6 of \cite{cazenave2003} (see also \cite{kato1987,bao2023_semi_smooth,bao2023_EWI}), the $H^2$-regularity can be propagated by the NLSE \cref{NLSE} with $ V \in L^\infty(\Omega) $ and $\sigma > 0$ (corresponding to the case of \cref{eq:LT_L2}), i.e. it can be expected that $\psi \in C([0, T]; H^2_\text{per}(\Omega)) \cap C^1([0, T]; L^2(\Omega))$ if $\psi_0 \in H^2_\text{per}(\Omega)$. The assumptions for \cref{eq:LT_H1,eq:ST_L2,eq:ST_H1} are compatible with the assumption for \cref{eq:LT_L2} in the sense that the increment of the Sobolev exponent of $\psi$ is the same as the increment of the differentiability order of potential and nonlinearity. 
\end{remark}

\begin{remark}
	According to our error estimates in \cref{thm:LT,thm:ST}, the time step size restriction $\tau \lesssim h^2$ is natural in terms of the balance of spatial errors and temporal errors. Moreover, we can clearly observe this time step size restriction in the numerical results in \cref{sec:numerical experiments}, indicating that it is necessary and optimal. 
\end{remark}

\begin{remark}\label{rem:H1_bound}
	The $H^1$-norm error bounds in \cref{eq:LT_L2,eq:ST_L2} follow directly from the corresponding $L^2$-norm error bounds with $ \tau \lesssim h^2 $, standard projection error estimates of $P_N$, and the inverse estimate $\| \phi \|_{H^1} \lesssim h^{-1} \| \phi \|_{L^2}$ for all $\phi \in X_N$ \cite{book_spectral}. 
\end{remark}

\section{Proof of \cref{thm:LT} for the LTFS \cref{full_discrete_LT}}
In this section, we shall show the optimal error bounds for the LTFS method \cref{full_discrete_LT}. We start with the optimal $L^2$-norm error bound and present the proof of \cref{eq:LT_L2} in \cref{thm:LT}. In the rest of this section, we assume that $ V \in L^\infty(\Omega) $, $ \sigma > 0 $ and $\psi \in C([0, T]; H_\text{per}^2(\Omega)) \cap C^1([0, T]; L^2(\Omega))$, and define a constant 
\begin{equation*}
	M_2 = \max\left \{ \| V \|_{L^\infty}, \| \psi \|_{L^\infty([0, T]; H^2(\Omega))}, \| \partial_t \psi \|_{L^\infty([0, T]; L^2(\Omega))}, \| \psi \|_{L^\infty([0, T]; L^\infty(\Omega))} \right \}. 
\end{equation*}
\subsection{Some estimates for the operator $ B $}
For the operator $B$ defined in \cref{eq:B}, we have

\begin{lemma}\label{lem:B}
	Under the assumptions $ V \in L^\infty(\Omega) $ and $ \sigma > 0 $, for any $ v, w \in L^2(\Omega) $ satisfying $ \| v \|_{L^\infty} \leq M $ and $ \| w \|_{L^\infty} \leq M $, we have
	\begin{equation}\label{lem:diff_B1}
		\| B(v) - B(w) \|_{L^2} \leq C(\| V \|_{L^\infty}, M) \| v - w \|_{L^2}. 
	\end{equation}
	In particular, when $w = 0$, we have
	\begin{equation}\label{lem:B1}
		\| B(v) \|_{L^2} \leq C(\| V \|_{L^\infty}, M) \| v \|_{L^2}. 
	\end{equation}
\end{lemma}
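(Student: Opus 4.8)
The plan is to bound $B(v)-B(w) = -i\big(\phiu{v}v - \phiu{w}w\big)$ by splitting into the potential contribution and the nonlinear contribution. Writing $\phiu{v}v - \phiu{w}w = V(v-w) + \big(f(|v|^2)v - f(|w|^2)w\big)$, the potential term is immediate: $\|V(v-w)\|_{L^2} \le \|V\|_{L^\infty}\|v-w\|_{L^2}$. So everything reduces to controlling the nonlinear term $g(v) - g(w)$ where $g(z) := |z|^{2\sigma}z$ (absorbing $\beta$), pointwise, and then taking $L^2$ norms.

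For the nonlinear term I would prove the pointwise Lipschitz-type bound $|g(v(x)) - g(w(x))| \le C(\sigma, M)\,|v(x)-w(x)|$ valid whenever $|v(x)|, |w(x)| \le M$. The clean way is to observe that $g(z) = |z|^{2\sigma}z$ restricted to the closed disk $\{|z|\le M\} \subset \C \cong \R^2$ is Lipschitz: for $\sigma \ge 1/2$ the map $z\mapsto |z|^{2\sigma}z$ is $C^1$ on all of $\C$ and hence Lipschitz on the compact disk with constant $\sup_{|z|\le M}\|Dg(z)\| \le (2\sigma+1)M^{2\sigma}$; for $0<\sigma<1/2$ one cannot differentiate at the origin, but $|z|^{2\sigma}$ is still Hölder-type controlled and the standard elementary inequality
\begin{equation*}
	\big|\,|v|^{2\sigma}v - |w|^{2\sigma}w\,\big| \le (2\sigma+1)\max\{|v|,|w|\}^{2\sigma}\,|v-w|
\end{equation*}
holds for all $v,w\in\C$ (proved, e.g., by writing the difference as $|v|^{2\sigma}(v-w) + (|v|^{2\sigma}-|w|^{2\sigma})w$ and using $\big||v|^{2\sigma}-|w|^{2\sigma}\big| \le |\,|v|-|w|\,| \cdot \max\{|v|,|w|\}^{2\sigma-1}\cdot\text{const}$ when $2\sigma\ge 1$, or the subadditivity $|\,|v|^{2\sigma}-|w|^{2\sigma}| \le |\,|v|-|w|\,|^{2\sigma} \le (2M)^{2\sigma-1}|\,|v|-|w|\,|$-type estimate when $2\sigma<1$... actually the cleanest uniform statement is $|a^{2\sigma}-b^{2\sigma}|\le c_\sigma \max\{a,b\}^{2\sigma-1}|a-b|$ for $2\sigma\ge1$ and $|a^{2\sigma}-b^{2\sigma}| \le |a-b|^{2\sigma}$ for $2\sigma\le 1$, combined with $|a-b|\le 2M$). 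In either regime one lands on a pointwise bound of the form $|g(v(x))-g(w(x))| \le C(\sigma,M)|v(x)-w(x)|$.

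Having the pointwise bound, I take $L^2$ norms over $\Omega$: $\|g(v)-g(w)\|_{L^2} \le C(\sigma,M)\|v-w\|_{L^2}$. Combining with the potential estimate gives $\|B(v)-B(w)\|_{L^2} \le \big(\|V\|_{L^\infty} + C(\sigma,M)\big)\|v-w\|_{L^2} =: C(\|V\|_{L^\infty},M)\|v-w\|_{L^2}$, which is \cref{lem:diff_B1}. Then \cref{lem:B1} is just the special case $w=0$, noting $B(0)=0$ and $\|0\|_{L^\infty}=0\le M$ trivially, so the hypothesis is satisfied and the constant depends only on $\|V\|_{L^\infty}$, $M$ (and $\sigma$, which is a fixed parameter of the equation).

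The only mildly delicate point — the "main obstacle", though it is fairly routine — is the low-regularity range $0<\sigma<1/2$, where $z\mapsto|z|^{2\sigma}z$ fails to be $C^1$ at the origin and the naive mean value argument breaks down. This is handled by the elementary pointwise inequality above rather than calculus, so no real difficulty remains; the boundedness assumptions $\|v\|_{L^\infty},\|w\|_{L^\infty}\le M$ are exactly what is needed to convert the Hölder-type estimate on $|z|^{2\sigma}$ into a genuine Lipschitz estimate on the compact disk. I would also remark that the same argument, applied with $w$ replaced by a translate, underlies the later $H^1$ estimates, but for this lemma the $L^2$ statement is all that is claimed.
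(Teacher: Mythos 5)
Your overall strategy is sound and is the standard one: split $B(v)-B(w)=-i\big[V(v-w)+\beta(|v|^{2\sigma}v-|w|^{2\sigma}w)\big]$, bound the potential part by $\|V\|_{L^\infty}\|v-w\|_{L^2}$, and reduce the nonlinear part to the pointwise inequality $\big||v|^{2\sigma}v-|w|^{2\sigma}w\big|\lesssim_\sigma \max\{|v|,|w|\}^{2\sigma}|v-w|$, which the $L^\infty$ bounds convert into an $L^2$ Lipschitz estimate. The paper itself does not prove this lemma but defers to Lemma 3.2 of \cite{bao2023_semi_smooth}, where essentially this argument is used, so there is no methodological divergence to report.

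There is, however, a genuine (if localized) gap in your justification of the pointwise inequality precisely in the regime $0<2\sigma<1$ that you single out as the main obstacle. The decomposition $|v|^{2\sigma}(v-w)+(|v|^{2\sigma}-|w|^{2\sigma})w$ is the right start, but the route ``$|a^{2\sigma}-b^{2\sigma}|\le|a-b|^{2\sigma}$ combined with $|a-b|\le 2M$'' does not close: it gives $\big||v|^{2\sigma}-|w|^{2\sigma}\big|\,|w|\le M\,\big||v|-|w|\big|^{2\sigma}$, and $s^{2\sigma}$ is \emph{not} $O(s)$ as $s\to 0$ when $2\sigma<1$, so no Lipschitz bound follows; the a priori bound $|a-b|\le 2M$ only caps $|a-b|^{2\sigma}$ by a constant, not by $C|a-b|$. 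The repair is elementary and needs no case distinction on which modulus is larger: with $a=\max\{|v|,|w|\}$, $b=\min\{|v|,|w|\}$ and $t=b/a\in[0,1]$, the monotonicity $t^{2\sigma}\ge t$ for $2\sigma\le 1$ gives
\begin{equation*}
\big||v|^{2\sigma}-|w|^{2\sigma}\big|\,\max\{|v|,|w|\}=a^{2\sigma+1}\big(1-t^{2\sigma}\big)\le a^{2\sigma+1}(1-t)=a^{2\sigma}(a-b)\le \max\{|v|,|w|\}^{2\sigma}\,|v-w|,
\end{equation*}
which dominates the second term of your decomposition. With this substitution the proof is complete; the $2\sigma\ge 1$ case, the passage to $L^2$ norms, the $w=0$ specialization (noting $B(0)=0$ since $\sigma>0$), and the remark that the constant also depends on the fixed parameters $\sigma$ and $\beta$ are all fine as written.
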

Let $ dB(\cdot)[\cdot] $ be the G\^ateaux derivative defined as
\begin{equation}\label{eq:Gateaux}
	dB(v)[w]:= \lim_{\varepsilon \rightarrow 0} \frac{B(v + \varepsilon w) - B(v)}{\vep}, 
\end{equation}
where the limit is taken for real $\vep$ (see also \cite{bao2023_semi_smooth}). 
Introduce a continuous function $G:\C \rightarrow \C$ as 
\begin{equation}\label{eq:G_def}
	G(z) = \left\{
	\begin{aligned}
		& f^\prime(|z|^2) z^2=\beta \sigma |z|^{2\sigma-2} z^2, &z \neq 0, \\
		&0, &z=0,
	\end{aligned}
	\right. \qquad z \in \C.
\end{equation}
Plugging the expression of $B$ \cref{eq:B} into \cref{eq:Gateaux}, we obtain
\begin{align}\label{eq:dB_def}
	dB(v)[w]=-i \left[ Vw + (1+\sigma)f(|v|^2) w + G(v) \overline{w} \right], 
\end{align}
where we use $f^\prime(|z|^2)|z|^2 = \sigma f(|z|^2) $ for all $ z \in \C $ and define $G(v)(x) := G(v(x))$. 
Then we have
\begin{lemma}\label{lem:dB1}
	Under the assumptions $ V \in L^\infty(\Omega) $ and $ \sigma > 0 $, for any $ v, w \in L^2(\Omega) $ satisfying $ \| v \|_{L^\infty} \leq M $, we have
	\begin{equation*}
		\| dB(v)[w] \|_{L^2} \leq C(\| V \|_{L^\infty}, M) \| w \|_{L^2}. 
	\end{equation*}
\end{lemma}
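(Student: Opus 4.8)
The plan is to bound each of the three terms in the explicit formula \cref{eq:dB_def} for $dB(v)[w]$ separately in $L^2$, using only that $V$ is bounded and that $v$ is bounded by $M$. First I would write
\begin{equation*}
	\| dB(v)[w] \|_{L^2} \leq \| Vw \|_{L^2} + (1+\sigma) \| f(|v|^2) w \|_{L^2} + \| G(v)\overline{w} \|_{L^2}.
\end{equation*}
The first term is immediate: $\| Vw \|_{L^2} \leq \| V \|_{L^\infty} \| w \|_{L^2}$. For the second term, recall $f(|v|^2) = \beta |v|^{2\sigma}$, so $\| f(|v|^2) \|_{L^\infty} = |\beta| \, \| v \|_{L^\infty}^{2\sigma} \leq |\beta| M^{2\sigma}$, which gives $\| f(|v|^2) w \|_{L^2} \leq |\beta| M^{2\sigma} \| w \|_{L^2}$. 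For the third term, from \cref{eq:G_def} we have $|G(z)| = |\beta| \sigma |z|^{2\sigma}$ for $z \neq 0$ and $G(0) = 0$, so $|G(v(x))| \leq |\beta| \sigma M^{2\sigma}$ pointwise, and hence $\| G(v) \overline{w} \|_{L^2} \leq |\beta| \sigma M^{2\sigma} \| w \|_{L^2}$ (note $|\overline{w}| = |w|$). Collecting the three bounds yields the claim with $C(\| V \|_{L^\infty}, M) = \| V \|_{L^\infty} + (1+\sigma)|\beta| M^{2\sigma} + |\beta|\sigma M^{2\sigma}$.

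There is essentially no real obstacle here; the only point requiring a word of care is that when $\sigma < 1$ the functions $\rho \mapsto \rho^\sigma$ and $z \mapsto G(z)$ are merely continuous (not Lipschitz) at the origin, so one should not attempt to exploit smoothness of $f$ or $G$ — but since we only need an $L^\infty$ bound on $f(|v|^2)$ and $G(v)$, and $t \mapsto t^{2\sigma}$ is bounded on $[0, M]$ for every $\sigma > 0$, the pointwise estimates above are all that is needed. One should also note that $f(|v|^2) \in L^\infty(\Omega)$ and $G(v) \in L^\infty(\Omega)$ are well-defined measurable functions because $|v| \leq M$ a.e., so the products with $w \in L^2(\Omega)$ indeed lie in $L^2(\Omega)$. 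This makes the statement consistent with \cref{lem:B}, whose proof presumably proceeds along the same lines.
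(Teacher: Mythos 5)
Your proof is correct: the paper itself omits the argument, deferring to Lemmas 3.2 and 3.3 of \cite{bao2023_semi_smooth}, and your direct term-by-term $L^2$ estimate of \cref{eq:dB_def} using the pointwise bounds $|f(|z|^2)|+|G(z)|\lesssim |z|^{2\sigma}\leq M^{2\sigma}$ is exactly the standard argument intended there. Your remark that only boundedness (not Lipschitz continuity) of $\rho\mapsto\rho^\sigma$ near the origin is needed, so that $\sigma>0$ suffices, correctly identifies the one point of care.
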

The proofs of \cref{lem:B,lem:dB1} can be found in \cite{bao2023_semi_smooth} (Lemmas 3.2 and 3.3) and we shall omit them for brevity. From the definition of the nonlinear flow $\Phi_B^\tau$ in \cref{eq:nonl_step}, we immediately have
\begin{lemma}\label{lem:Phi_B^tau1}
	Under the assumptions $ V \in L^\infty(\Omega) $ and $ \sigma > 0 $, for any $ v \in L^\infty(\Omega) $ and $ w \in L^\infty(\Omega) $, we have
	\begin{equation*}
		\| \Phi_B^\tau(v) \|_{L^2} = \| v \|_{L^2}, \quad \| \Phi_B^\tau(w) \|_{L^\infty} = \| w \|_{L^\infty}. 
	\end{equation*}
\end{lemma}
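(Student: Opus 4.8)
The plan is to read the claim off directly from the closed-form expression \cref{eq:nonl_step} for the nonlinear subflow, with no estimates required. Writing
$\Phi_B^\tau(v)(x) = v(x)\, e^{-i\tau(V(x) + f(|v(x)|^2))}$, the key point is that the phase $\theta_v(x) := \tau\bigl(V(x) + f(|v(x)|^2)\bigr)$ is a real-valued function that is finite for a.e.\ $x \in \Omega$: by hypothesis $V$ is real-valued and belongs to $L^\infty(\Omega)$, while $f(|v|^2) = \beta |v|^{2\sigma}$ is real-valued and, since $\sigma > 0$ and $v \in L^\infty(\Omega)$, also lies in $L^\infty(\Omega)$. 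Therefore $\bigl|e^{-i\theta_v(x)}\bigr| = 1$ for a.e.\ $x$, and the first step is simply to record the pointwise identity $|\Phi_B^\tau(v)(x)| = |v(x)|$ for a.e.\ $x \in \Omega$.

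From this pointwise modulus identity both claimed equalities follow at once. Squaring and integrating over $\Omega$ yields $\|\Phi_B^\tau(v)\|_{L^2}^2 = \int_\Omega |v(x)|^2 \, \rmd x = \|v\|_{L^2}^2$, and taking the essential supremum (applying the same pointwise identity with $v$ replaced by $w$) yields $\|\Phi_B^\tau(w)\|_{L^\infty} = \|w\|_{L^\infty}$. In other words, the nonlinear flow $\Phi_B^\tau$ acts as a pointwise-modulus-preserving multiplication operator, hence is an isometry on both $L^2(\Omega)$ and $L^\infty(\Omega)$.

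There is essentially no obstacle in this lemma; the only point needing a word of care is verifying that the exponent $\theta_v$ is a genuine a.e.-finite real number, so that the exponential factor has modulus exactly one — and this is precisely where the boundedness of $V$ together with the condition $\sigma > 0$ and $v, w \in L^\infty(\Omega)$ enter. No commutator bounds, Sobolev embeddings, or regularity of $v$ beyond $L^\infty$ are needed, which is why the statement is recorded separately as a building block for the later error analysis.
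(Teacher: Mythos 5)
Your argument is correct and is exactly what the paper has in mind: the paper states this lemma without proof, noting only that it follows immediately from the definition of $\Phi_B^\tau$ in \cref{eq:nonl_step}, and your pointwise observation that the real-valued phase makes the exponential factor have modulus one is precisely that immediate justification.
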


\subsection{Local truncation error and stability estimates}
We shall establish the local truncation error and stability estimates for the first-order Lie-Trotter splitting. In the rest of this paper, we always abbreviate $\psi(\cdot, t)$ by $\psi(t)$ for simplicity of notations when there is no confusion. 

Define the local truncation error of the Lie-Trotter time-splitting method as
\begin{equation}\label{eq:E^n_def}
	\mathcal{E}^n = P_N \psi(t_{n+1}) - \mathcal{S}_1^\tau(P_N\psi(t_n)), \quad 0 \leq n \leq T/\tau-1. 
\end{equation}
Then the local truncation error can be decomposed into two parts based on different regularity requirements on potential and nonlinearity. 
\begin{proposition}\label{prop:dominant_error_LT_L2}
	Assuming that $ V \in L^\infty(\Omega) $, $ \sigma > 0 $ and $\psi \in C([0, T]; H_\text{per}^2(\Omega)) \cap C^1([0, T]; L^2(\Omega))$, we have
	\begin{equation*}
		\mathcal{E}^n = \mathcal{E}_1^n + \mathcal{E}_2^n, 
	\end{equation*}
	where
	\begin{equation*}
		\| \mathcal{E}_1^n \|_{L^2} \lesssim \tau^2 + \tau h^2, \quad \mathcal{E}_2^n = -e^{i\tau\Delta}\int_0^\tau (I-e^{-is\Delta}) P_N B(P_N \psi(t_n)) \rmd s.
	\end{equation*}
\end{proposition}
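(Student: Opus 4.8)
The plan is to use the variation-of-constants (Duhamel) formula to expand both the exact flow over one step and the numerical flow $\mathcal{S}_1^\tau$, subtract them, and isolate the term that carries the weakest regularity requirement — which will be precisely $\mathcal{E}_2^n$ — while bounding everything else by $\tau^2 + \tau h^2$. First I would write the exact solution at $t_{n+1}$ via Duhamel as $\psi(t_{n+1}) = e^{i\tau\Delta}\psi(t_n) + \int_0^\tau e^{i(\tau - s)\Delta} B_{\mathrm{full}}(\psi(t_n + s))\,\rmd s$, where $B_{\mathrm{full}}$ denotes the full nonlinear/potential part; and expand the numerical step $\mathcal{S}_1^\tau(P_N\psi(t_n)) = e^{i\tau\Delta} P_N\Phi_B^\tau(P_N\psi(t_n))$ by writing $\Phi_B^\tau(\phi) = \phi + \int_0^\tau B(\Phi_B^s(\phi))\,\rmd s$. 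Since $P_N$ commutes with $e^{i\tau\Delta}$ and is a projection, applying $P_N$ to the Duhamel formula for $\psi$ and subtracting gives
\begin{equation*}
	\mathcal{E}^n = e^{i\tau\Delta}\int_0^\tau \left[ e^{-is\Delta} P_N B_{\mathrm{full}}(\psi(t_n+s)) - P_N B(\Phi_B^s(P_N\psi(t_n))) \right]\rmd s.
\end{equation*}

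Next I would peel this integrand apart in stages. The dominant piece is obtained by replacing $s$-dependent quantities by their values at $s=0$: write $B_{\mathrm{full}}(\psi(t_n+s)) = B(P_N\psi(t_n)) + \big(B_{\mathrm{full}}(\psi(t_n+s)) - B(P_N\psi(t_n))\big)$ and similarly $B(\Phi_B^s(P_N\psi(t_n))) = B(P_N\psi(t_n)) + \big(B(\Phi_B^s(P_N\psi(t_n))) - B(P_N\psi(t_n))\big)$. The leading term then produces exactly $-e^{i\tau\Delta}\int_0^\tau (I - e^{-is\Delta}) P_N B(P_N\psi(t_n))\,\rmd s = \mathcal{E}_2^n$ (up to the sign bookkeeping: the $I$ comes from the numerical side, the $e^{-is\Delta}$ from the exact side). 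Everything left over I would collect into $\mathcal{E}_1^n$ and show it is $O(\tau^2 + \tau h^2)$ in $L^2$: for the numerical remainder $B(\Phi_B^s(P_N\psi(t_n))) - B(P_N\psi(t_n))$, I use the local Lipschitz bound \cref{lem:diff_B1} (valid since $\Phi_B^s$ preserves the $L^\infty$-norm by \cref{lem:Phi_B^tau1}, so the $L^\infty$-bounds needed are uniform) together with $\|\Phi_B^s(\phi) - \phi\|_{L^2} \lesssim s\,\|B(\phi)\|_{L^2} \lesssim s$ from \cref{lem:B1}, giving an $O(\tau^2)$ contribution after integrating $s$ over $[0,\tau]$; for the exact remainder $B_{\mathrm{full}}(\psi(t_n+s)) - B(P_N\psi(t_n))$, I split further into $B_{\mathrm{full}}(\psi(t_n+s)) - B_{\mathrm{full}}(\psi(t_n))$, controlled by $\|\psi(t_n+s)-\psi(t_n)\|_{L^2} \lesssim s\,\|\partial_t\psi\|_{L^\infty L^2} \lesssim s$ (using $C^1([0,T];L^2)$ regularity and the Lipschitz bound), and the projection discrepancy $B(\psi(t_n)) - B(P_N\psi(t_n))$ controlled by $\|(I-P_N)\psi(t_n)\|_{L^2} \lesssim h^2\,\|\psi(t_n)\|_{H^2}$ via standard spectral projection estimates, contributing $O(\tau h^2)$ after the $s$-integration. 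Here I should be a little careful that $B_{\mathrm{full}}$ and $B$ differ only in whether the argument is projected, so after these splittings all remaining pieces compare arguments within a fixed $L^\infty$-ball, making \cref{lem:diff_B1} applicable throughout (the $L^\infty$-bound on $P_N\psi(t_n)$ follows since $\psi(t_n) \in H^2 \hookrightarrow L^\infty$ in 1D and $\|P_N\psi(t_n)\|_{L^\infty} \lesssim \|\psi(t_n)\|_{H^1}$, or one argues via $H^2$ directly).

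The only subtlety I anticipate — and the main thing to handle with care rather than a true obstacle — is the exponent range $\sigma > 0$ including $\sigma \in (0,1/2)$, where $f(\rho) = \beta\rho^\sigma$ is not Lipschitz in $\psi$ in the classical sense. But \cref{lem:diff_B1} is stated for exactly this range of $\sigma$ (its proof in \cite{bao2023_semi_smooth} handles the non-Lipschitz case using $\big||v|^{2\sigma}v - |w|^{2\sigma}w\big| \lesssim (|v|^{2\sigma} + |w|^{2\sigma})|v-w|$ on an $L^\infty$-ball), so as long as I only ever invoke that lemma — and never differentiate $B$ or use $dB$ in this part of the argument — the restriction $\sigma > 0$ suffices and no further regularity of $f$ is needed. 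I would also note that no Laplacian or spatial derivative of $V$ or $f$ ever appears in this decomposition: the ``bad'' oscillatory factor $I - e^{-is\Delta}$ is simply carried along untouched into $\mathcal{E}_2^n$, to be dealt with later by the RCO summation-by-parts argument, which is the whole point of isolating it here rather than estimating it naively (which would cost two derivatives and force $H^2$-potential).
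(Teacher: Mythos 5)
Your proposal is correct and follows the same overall strategy as the paper: expand the exact solution by Duhamel's formula and the numerical flow via the nonlinear sub-flow $\Phi_B^\tau$, subtract, carry the term $-e^{i\tau\Delta}\int_0^\tau (I-e^{-is\Delta})P_N B(P_N\psi(t_n))\,\rmd s$ untouched into $\mathcal{E}_2^n$, and bound everything else by the $L^\infty$-conditional Lipschitz estimate \cref{lem:diff_B1}, the $C^1([0,T];L^2(\Omega))$ regularity in time, and the spectral projection error. The execution differs in one respect: the paper expands both flows to second order --- the iterated Duhamel formula \cref{eq:duhamel} and the Taylor expansion \cref{eq:taylor} of $\Phi_B^\tau$, whose $O(\tau^2)$ remainders $e_1$ and $e_2$ in \cref{e3} involve the G\^ateaux derivative $dB$ and are estimated via \cref{lem:dB1} --- and then splits the remaining first-order difference $e_3$ into exactly the three pieces you describe. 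You instead keep first-order integral remainders throughout, bounding $\Phi_B^s(\phi)-\phi$ pointwise by $s\,|B(\phi)|$ and $\psi(t_n+s)-\psi(t_n)$ by $s\,\|\partial_t\psi\|_{L^\infty([0,T];L^2)}$, which lets you dispense with $dB$ and \cref{lem:dB1} entirely for this proposition. Both routes produce the same $\mathcal{E}_2^n$ and the same $O(\tau^2+\tau h^2)$ bound on $\mathcal{E}_1^n$; yours is marginally more economical here, while the paper's second-order expansion is the template it reuses for the Strang splitting, where the $dB$ terms genuinely matter. Your closing observations --- that for $\sigma\in(0,1/2)$ only the non-Lipschitz-safe estimate \cref{lem:diff_B1} is ever invoked, and that no derivative ever falls on $V$ or $f$ because the oscillatory factor is deferred to the RCO summation-by-parts step --- match the paper's reasoning.
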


\begin{proof}
	The proof proceeds similarly to the proof of Proposition 3.6 in \cite{bao2023_semi_smooth} and we only sketch it here for the convenience of the reader. By Duhamel's formula, one has
	\begin{align}\label{eq:duhamel}
		&P_N\psi(t_{n+1}) 
		= e^{i \tau \Delta} P_N \psi(t_n) + \int_0^\tau e^{i (\tau - s )\Delta} P_N B(e^{is \Delta} \psi(t_n)) \rmd s \notag\\
		&\quad + \int_0^\tau \int_0^s e^{i (\tau - s) \Delta} P_N\left( dB(e^{i(s-\vsigma)\Delta}\psi(t_n+\vsigma)) [e^{i(s - \vsigma)\Delta} B(\psi(t_n+\vsigma))] \right) \rmd \vsigma \rmd s. 
	\end{align}
	Applying the first-order Taylor expansion
	\begin{equation}\label{eq:taylor}
		\Phi_B^\tau(w) = w + \tau B(w) + \tau^2 \int_0^1 (1-\theta) dB(\Phi_B^{\theta \tau}(w)) [B(\Phi_B^{\theta \tau}(w))] \rmd \theta
	\end{equation}
	to $\mathcal{S}_1^\tau(P_N\psi(t_n))$, we have
	\begin{align}\label{eq:S1}
		&\mathcal{S}_1^\tau(P_N\psi(t_n)) 
		= e^{i \tau \Delta} P_N \psi(t_n) + \tau e^{i \tau \Delta} P_N B(P_N \psi(t_n)) \notag\\
		&\quad + \tau^2 \int_0^1 (1-\theta) e^{i \tau \Delta} P_N \left( dB(\Phi_B^{\theta\tau}(P_N \psi(t_n))) [B(\Phi_B^{\theta\tau}(P_N \psi(t_n)))]  \right) \rmd \theta.  
	\end{align}
	Subtracting \cref{eq:S1} from \cref{eq:duhamel}, recalling \cref{eq:E^n_def}, one obtain
	\begin{equation}\label{eq:error_decomp}
		\mathcal{E}^n = P_N\psi(t_{n+1}) - \mathcal{S}_1^\tau(P_N\psi(t_n)) = e_1 + e_2 + e_3, 
	\end{equation}
	where
	\begin{equation}\label{e3}
		\begin{aligned}
			&e_1 = \int_0^\tau \int_0^s e^{i (\tau - s) \Delta} P_N\left( dB(e^{i(s-\vsigma)\Delta}\psi(t_n+\vsigma)) [e^{i(s - \vsigma)\Delta} B(\psi(t_n+\vsigma))] \right) \rmd \vsigma \rmd s, \\
			&e_2 = -\tau^2 \int_0^1 (1-\theta) e^{i \tau \Delta} P_N \left( dB(\Phi_B^{\theta\tau}(P_N \psi(t_n))) [B(\Phi_B^{\theta\tau}(P_N \psi(t_n)))]  \right) \rmd \theta, \\
			&e_3 = \int_0^\tau e^{i (\tau - s )\Delta} P_N B(e^{is \Delta} \psi(t_n)) \rmd s - \tau e^{i \tau \Delta} P_N B(P_N \psi(t_n)). 
		\end{aligned}
	\end{equation}
	By the boundedness of $e^{i t \Delta}$ and $P_N$, and using \cref{lem:B,lem:dB1,lem:Phi_B^tau1} and Sobolev embedding, we have
	\begin{equation}
		\| e_1 \|_{L^2} \lesssim \tau^2, \quad \| e_2 \|_{L^2} \lesssim \tau^2. 
	\end{equation}
	Then we shall estimate $ e_3 $. From $e_3$ in \cref{e3}, we have
	\begin{align}\label{eq:decomp_e3}
		e_3 
		&= \int_0^\tau \left[ e^{i (\tau - s )\Delta} P_N B(e^{is \Delta} \psi(t_n)) - e^{i \tau \Delta} P_N B(P_N \psi(t_n)) \right] \rmd s \notag\\
		&= \int_0^\tau e^{i (\tau - s )\Delta} P_N (B(e^{is \Delta} \psi(t_n)) - B(\psi(t_n))) \rmd s \notag \\
		&\quad + \int_0^\tau e^{i (\tau - s )\Delta} P_N (B(\psi(t_n)) - B(P_N \psi(t_n))) \rmd s \notag\\
		&\quad - e^{i \tau \Delta} \int_0^\tau (I - e^{- i s \Delta}) P_N B(P_N \psi(t_n)) \rmd s =: e_3^1 + e_3^2 + e_3^3. 
	\end{align}
	By \cref{lem:diff_B1}, the boundedness of $ P_N $ and $ e^{i t \Delta} $, standard Fourier projection error estimates of $P_N$ \cite{book_spectral} and 
	$\| (I - e^{i t \Delta}) \phi \|_{L^2} \lesssim t \| \phi \|_{H^2}$ for all $ \phi \in H^2_\text{per}(\Omega) $ \cite{BBD,bao2022}, we have
	\begin{equation*}
		\| e_3^1 \|_{L^2} \lesssim \tau^2, \quad \| e_3^2 \|_{L^2} \lesssim \tau h^2. 
	\end{equation*}
	The conclusion follows by letting $ \mathcal{E}_1^n = e_1 + e_2 + e_3^1 + e_3^2 $ and $ \mathcal{E}_2^n = e_3^3 $. 
\end{proof}

For the nonlinear flow, we have the following $ L^\infty $-conditional $ L^2 $-stability estimate. 
\begin{proposition}\label{prop:stability_L2}
	Let $ v, w \in X_N $ such that $ \| v \|_{L^\infty} \leq M $, $ \| w \|_{L^\infty} \leq M $. Then 
	\begin{equation*}
		\| P_N (\Phi_B^\tau(v) - v) - P_N (\Phi_B^\tau(w) - w) \|_{L^2} \leq C(\| V \|_{L^\infty}, M) \tau \| v - w \|_{L^2}. 
	\end{equation*}
\end{proposition}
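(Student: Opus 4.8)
The plan is to write the difference as an integral of the G\^ateaux derivative along a path and exploit the fact that $\Phi_B^\tau$ preserves the modulus pointwise, so the whole expression is a controlled perturbation of order $\tau$. First I would note that, from the explicit formula \cref{eq:nonl_step}, one has $\Phi_B^\tau(v) - v = v\,(e^{-i\tau\phi(v)} - 1)$ with $\phi(v) = V + f(|v|^2)$, and similarly for $w$; since $|e^{-i\theta}-1| \le |\theta|$ this already gives $\|\Phi_B^\tau(v)-v\|_{L^2} \lesssim \tau\|v\|_{L^2}$, but that crude bound does not suffice because we need the \emph{difference} of the two increments. Instead I would use the Taylor expansion \cref{eq:taylor} (with $\theta$-integral remainder) applied to both $v$ and $w$, which yields
\begin{equation*}
	\Phi_B^\tau(v) - v = \tau B(v) + \tau^2 R(v), \qquad R(v) = \int_0^1 (1-\theta)\, dB(\Phi_B^{\theta\tau}(v))[B(\Phi_B^{\theta\tau}(v))]\,\rmd\theta,
\end{equation*}
so that
\begin{equation*}
	P_N(\Phi_B^\tau(v) - v) - P_N(\Phi_B^\tau(w) - w) = \tau P_N\bigl(B(v) - B(w)\bigr) + \tau^2 P_N\bigl(R(v) - R(w)\bigr).
\end{equation*}

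For the first term I would apply \cref{lem:diff_B1} directly, using $\|v\|_{L^\infty}, \|w\|_{L^\infty} \le M$, to get $\|\tau P_N(B(v)-B(w))\|_{L^2} \le C(\|V\|_{L^\infty}, M)\,\tau\,\|v-w\|_{L^2}$, since $P_N$ is an $L^2$-contraction. For the second term, since we are multiplying by $\tau^2$ and $\tau \le T$, it is enough to bound $\|R(v) - R(w)\|_{L^2} \lesssim C(\|V\|_{L^\infty}, M, T)\,\|v-w\|_{L^2}$ uniformly in $\theta$, after which $\tau^2\|R(v)-R(w)\|_{L^2} \lesssim \tau \cdot \tau \|v-w\|_{L^2} \le \tau T \|v-w\|_{L^2}$ absorbs into the claimed bound. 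To estimate $R(v)-R(w)$ I would: (i) use \cref{lem:Phi_B^tau1} to see that $\Phi_B^{\theta\tau}(v)$ and $\Phi_B^{\theta\tau}(w)$ still have $L^\infty$-norm $\le M$; (ii) use the Lipschitz-type bounds \cref{lem:diff_B1} for $B$ and an analogous bilinear estimate for $dB(\cdot)[\cdot]$ in both arguments (which follows from the explicit formula \cref{eq:dB_def} together with the local H\"older/Lipschitz continuity of $\rho\mapsto f(\rho)$, $G$ on bounded sets — this is exactly the kind of estimate proved in \cite{bao2023_semi_smooth} underlying \cref{lem:dB1}); and (iii) control $\|\Phi_B^{\theta\tau}(v) - \Phi_B^{\theta\tau}(w)\|_{L^2}$, which by the mean-value/Taylor argument again reduces to $\|v-w\|_{L^2}$ plus a $\theta\tau$-small correction, closing the loop.

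The main obstacle is step (ii)–(iii): one needs a stability estimate for $\Phi_B^{\theta\tau}$ itself and a difference estimate for $dB(v)[w]$ that is Lipschitz in the base point $v$ even though $f(\rho) = \beta\rho^\sigma$ is only H\"older (not Lipschitz) near $\rho = 0$ when $\sigma < 1$. The resolution is that the relevant quantities — $|v|^{2\sigma}$, $G(v) = \beta\sigma|v|^{2\sigma-2}v^2$ — are still Lipschitz on the set $\{|z| \le M\}$ as functions \emph{weighted by} the factors of $v$ or $\overline w$ that appear, or more simply that all differences can be reduced to estimating $\||v|^{2\sigma} - |w|^{2\sigma}\|$ times bounded factors, and on a bounded set $\bigl||v|^{2\sigma} - |w|^{2\sigma}\bigr| \le C(M)\,|v-w|$ when $\sigma \ge 1/2$ while for $0 < \sigma < 1/2$ one uses $\bigl||z_1|^{2\sigma}-|z_2|^{2\sigma}\bigr| \le 2|z_1 - z_2|^{2\sigma}$ together with the extra power of $|v-w|$ available from the other factor — but since here we only need a bound \emph{linear} in $\|v-w\|_{L^2}$ with a constant depending on $M$, and the troublesome factor is always multiplied by a bounded quantity, the $L^\infty$ bound $\le M$ makes all these pointwise estimates uniform. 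These are precisely the computations already carried out in the proofs of \cref{lem:B,lem:dB1} in \cite{bao2023_semi_smooth}, so I would invoke them rather than redo them, and the proof reduces to the two-line decomposition above.
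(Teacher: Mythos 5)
Your decomposition via the Taylor expansion \cref{eq:taylor}, $\Phi_B^\tau(v)-v=\tau B(v)+\tau^2R(v)$, handles the leading term correctly (\cref{lem:diff_B1} applies for all $\sigma>0$ because the gradient of $z\mapsto f(|z|^2)z$ is $O(|z|^{2\sigma})$, hence bounded on $\{|z|\le M\}$), but the remainder term contains a genuine gap. Bounding $\|R(v)-R(w)\|_{L^2}\lesssim\|v-w\|_{L^2}$ forces you to difference $dB(\cdot)[\cdot]$ in its \emph{base point}, and by \cref{eq:dB_def} this produces terms of the form $\bigl(f(|v_1|^2)-f(|v_2|^2)\bigr)w_1$ and $\bigl(G(v_1)-G(v_2)\bigr)\overline{w_1}$ with $v_j=\Phi_B^{\theta\tau}(v),\Phi_B^{\theta\tau}(w)$ and $w_1=B(\Phi_B^{\theta\tau}(v))$. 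Since $f(|z|^2)=\beta|z|^{2\sigma}$ and $G$ are only $2\sigma$-H\"older (not Lipschitz) near $z=0$ when $0<\sigma<1/2$, the best pointwise bound is $C|v_1-v_2|^{2\sigma}|w_1|$; the factor $w_1$ is merely bounded by a constant, not small, so there is no ``extra power of $|v-w|$ available from the other factor'' as your resolution claims. You would only get $\tau^2\|v-w\|_{L^{4\sigma}}^{2\sigma}$, which is not $\lesssim\tau\|v-w\|_{L^2}$, and this is exactly the mechanism behind the $1/2+\sigma$ order reduction in \cite{bao2023_semi_smooth} that the proposition is designed to avoid. Indeed, the paper's own Lipschitz estimate for $dB$ in both arguments (\cref{lem:diff_dB}) is stated only for $\sigma\ge1/2$, and the statement you are proving must cover all $\sigma>0$.

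The paper's proof circumvents this entirely by a different homotopy: it interpolates the \emph{argument}, setting $u^\theta=(1-\theta)w+\theta v$, and differentiates the explicit formula $\Phi_B^\tau(u^\theta)-u^\theta=u^\theta\bigl(e^{-i\tau(V+f(|u^\theta|^2))}-1\bigr)$ in $\theta$ (see \cref{eq:diff_Phi_B}). Every term in the resulting integrand either carries the factor $e^{-i\tau(V+f(|u^\theta|^2))}-1$, which is $O(\tau)$ pointwise, or an explicit factor $\tau$, and the coefficients of $(v-w)$ and $\overline{(v-w)}$ are $f(|u^\theta|^2)$ and $G(u^\theta)$ \emph{evaluated} at a single point, never differenced; only the pointwise bound $|f(|z|^2)|+|G(z)|\lesssim|z|^{2\sigma}$ is needed, which holds for every $\sigma>0$. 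Your approach is salvageable for $\sigma\ge1/2$, but for $0<\sigma<1/2$ you need to switch to this argument-interpolation strategy (or an equivalent one that never differences $f$ or $G$ at two base points).
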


\begin{proof}
	Let $ u^\theta = (1-\theta)w + \theta v $ for $ 0 \leq \theta \leq 1 $. Then we have, by recalling $\phiu{u} = V + f(|u|^2)$, 
	\begin{align}\label{eq:diff_Phi_B}
		&(\Phi_B^\tau(v) - v) - (\Phi_B^\tau(w) - w)
		= \int_0^1 \partial_\theta \left ( \Phi_B^\tau(u^\theta) - u^\theta \right ) \rmd \theta \notag\\
		&= \int_0^1 (v-w) \left ( e^{-i\tau \phiu{u^\theta}} - 1 \right ) \rmd \theta \notag \\
		&\quad - i \tau \int_0^1 \left ( \sigma f(|u^\theta|^2) (v-w) + G(u^\theta) \overline{(v - w)} \right ) e^{-i\tau \phiu{u^\theta}} \rmd \theta. 
	\end{align} 
	From \cref{eq:diff_Phi_B}, noting that $| e^{i \theta} - 1| \leq \theta$ for all $\theta \in \R$, $ \| \phiu{u^\theta} \|_{L^\infty} \lesssim \| V \|_{L^\infty} + M^{2\sigma}$ and
	\begin{equation}\label{eq:f_G_bound}
		\left |f(|z|^2) \right | + \left |G(z) \right | \lesssim |z|^{2\sigma}, \quad z \in \C, \quad \sigma > 0, 
	\end{equation}
	we have
	\begin{equation}\label{stability_L2}
		\| \Phi_B^\tau(v) - v) - (\Phi_B^\tau(w) - w) \|_{L^2} \leq C(\| V \|_{L^\infty}, M) \tau \| v - w \|_{L^2} + C(M) \tau \| v - w \|_{L^2}. 
	\end{equation}
	The conclusion then follows from \cref{stability_L2} and the boundedness of $ P_N $ immediately. 
\end{proof}

%

\begin{remark}
	We cannot expect the constant $C$ in \cref{prop:stability_L2} to depend exclusively on the minimal of $ \| v \|_{L^\infty} $ and $ \| w \|_{L^\infty} $ as is the case of Proposition 3.8 in \cite{bao2023_semi_smooth} since 
	\begin{equation*}
		\lim_{\tau \rightarrow 0} \left ( \frac{\Phi_B^\tau(v)(x) - v(x)}{\tau} - \frac{\Phi_B^\tau(w)(x) - w(x)}{\tau} \right) = B(v)(x) - B(w)(x), \quad x \in \Omega, 
	\end{equation*} 
	and the constant in \cref{lem:diff_B1} is already optimal. This also prevents us from generalizing the results in this paper to the LogSE considered in \cite{bao2019,bao2022}. 
\end{remark}

\subsection{Optimal $ L^2 $-norm error bound}\label{sec:proof_S1_L2}
We shall establish the optimal $L^2$-norm error bound \cref{eq:LT_L2} for the LTFS method \cref{LTFS}. As mentioned before, the main idea is to replace the Laplacian $\Delta$ with a temporal derivative in the dominant local error $\mathcal{E}^n_2$. 
\begin{proof}[Proof of \cref{eq:LT_L2} in \cref{thm:LT}]
	Let $ e^n = P_N \psi(t_n) - \psi^n $ for $ 0 \leq n \leq T/\tau $. We start with the $L^2$-error bound. By standard projection error estimates of $P_N$, recalling that $ \psi \in C([0, T]; H^2_\text{\rm per}(\Omega)) $, we have
	\begin{equation}\label{eq:reduce}
		\| \psi(t_n) - P_N \psi(t_n) \|_{L^2} \lesssim h^2, \quad 0 \leq n \leq T/\tau.  
	\end{equation}
	Then the proof reduces to the estimate of $ e^n $. For $  0 \leq n \leq T/\tau -1 $, we have
	\begin{align}\label{eq:error_eq_improv}
		e^{n+1} 
		&= P_N \psi(t_{n+1}) - \psi^{n+1} \notag\\
		&= P_N \psi(t_{n+1}) - \mathcal{S}_1^\tau(P_N\psi(t_n)) + \mathcal{S}_1^\tau(P_N\psi(t_n)) - \mathcal{S}_1^\tau(\psi^n) \notag \\
		&= e^{i \tau \Delta} e^n + Q^n + \mathcal{E}^n, 
	\end{align}
	where $\mathcal{E}^n$ is defined in \cref{eq:E^n_def} and
	\begin{align}
		Q^n &= e^{i \tau \Delta} P_N \left( (\Phi_B^\tau(P_N \psi(t_n)) - P_N \psi(t_n)) - (\Phi_B^\tau(\psi^n) - \psi^n)  \right). \label{eq:Qn}
	\end{align}
	Iterating \cref{eq:error_eq_improv}, we have
	\begin{equation}\label{eq:error_eq_iter}
		e^{n+1} = e^{i(n+1)\tau\Delta} e^0 + \sum_{k=0}^{n} e^{i (n-k)\tau \Delta} \left( Q^k + \mathcal{E}^k \right), \quad 0 \leq n \leq T/\tau-1. 
	\end{equation}
	For $ Q^n $ in \cref{eq:Qn}, applying \cref{prop:stability_L2} and the isometry property of $e^{it\Delta}$, we have
	\begin{equation*}
		\| Q^n \|_{L^2} \leq C(\| \psi^n \|_{L^\infty}, M_2) \tau \| e^n \|_{L^2}, \quad 0 \leq n \leq T/\tau - 1, 
	\end{equation*}
	which, together with the isometry property of $e^{i t \Delta}$ and the triangle inequality, implies
	\begin{equation}\label{eq:est_Zn}
		\left \| \sum_{k=0}^{n} e^{i (n-k)\tau \Delta} Q^k \right \|_{L^2} \leq C_1 \tau \sum_{k=0}^{n} \| e^k \|_{L^2}, \quad 0 \leq n \leq T/\tau -1, 
	\end{equation}
	where the constant $C_1$ depends on $\max_{0 \leq k \leq n} \| \psi^k \|_{L^\infty}$ and $M_2$. 
	By \cref{prop:dominant_error_LT_L2} and the isometry property of $e^{i t \Delta}$, we have
	\begin{align}\label{eq:est_En}
		\left\| \sum_{k=0}^{n} e^{i (n-k)\tau \Delta} \mathcal{E}^k \right\|_{L^2} 
		&\lesssim n \tau (\tau + h^2) + \left\| \sum_{k=0}^{n} e^{i (n-k)\tau \Delta} \mathcal{E}_2^k \right\|_{L^2} \notag\\
		& = n \tau (\tau + h^2) + \left\| \sum_{k=0}^{n} e^{-ik\tau \Delta} \int_0^\tau (I-e^{-is\Delta}) \rmd s P_N B(P_N \psi(t_k)) \right\|_{L^2} \notag\\
		& = n \tau (\tau + h^2) + \left\| \mathcal{J}^n \right\|_{L^2}, \quad \quad  0 \leq n \leq T/\tau - 1, 
	\end{align}
	where
	\begin{equation}\label{eq:Jn}
		\mathcal{J}^n = \sum_{k=0}^{n} e^{-ik\tau \Delta} \int_0^\tau (I-e^{-is\Delta}) \rmd s P_N B(P_N \psi(t_k)), \quad  0 \leq n \leq T/\tau - 1. 
	\end{equation}
	From \cref{eq:error_eq_iter}, using \cref{eq:est_Zn,eq:est_En}, we have
	\begin{equation}\label{eq:error_eq_LT_L2}
		\| e^{n+1} \|_{L^2} \lesssim \tau + h^2 + C_1\tau \sum_{k=0}^n \| e^k \|_{L^2} + \left\| \mathcal{J}^n \right\|_{L^2}, \quad 0 \leq n \leq T/\tau -1. 
	\end{equation}
	
	We shall estimate $ \mathcal{J}^n $ carefully to capture the phase cancellation. Set
	\begin{equation}\label{eq:phin}
		\phi^n = P_N B(P_N \psi(t_n)) \in X_N, \quad 0 \leq n \leq T/\tau - 1, 
	\end{equation}
	and define
	\begin{equation}\label{eq:r_S_def}
		\delta_l = \int_0^\tau (1-e^{is\mu_l^2}) \rmd s, \quad S_{n, l} = \sum_{k=0}^n e^{ i k \tau \mu_l^2}, \qquad l \in \mathcal{T}_N, \quad 0 \leq n \leq T/\tau -1. 
	\end{equation}
	For $\delta_l $ defined in \cref{eq:r_S_def}, we have
	\begin{equation}\label{eq:rl}
		|\delta_l| \leq \int_0^\tau |1-e^{is\mu_l^2}| \rmd s \leq \int_0^\tau 2 |\sin(s \mu_l^2 /2)| \rmd s \leq \int_0^\tau s \mu_l^2 \rmd s = \frac{\tau^2 \mu_l^2}{2}, \quad l \in \mathcal{T}_N. 
	\end{equation}
	When $ \tau < h^2 / \pi $, using the fact that $|\sin(x)| \geq 2|x|/\pi$ when $ x \in [0, \pi/2]$, we have
	\begin{equation}\label{eq:Snl}
		|S_{n, l}| = \frac{|1-e^{i (n+1) \tau \mu_l^2}|}{|1-e^{i \tau \mu_l^2}|} \leq \frac{2}{2\sin(\tau \mu_l^2 /2)} \lesssim \frac{1}{\tau \mu_l^2}, \quad 0 \neq l \in \mathcal{T}_N. 
	\end{equation}
	Combining \cref{eq:rl,eq:Snl} and noting that $\delta_l = 0$ when $l=0$, we have
	\begin{equation}\label{eq:rS}
		| \delta_l S_{n, l} | \lesssim \tau, \qquad l \in \mathcal{T}_N, \quad 0 \leq n \leq T/\tau-1, 
	\end{equation}
	where the constant is independent of $n$ and $l$. Inserting \cref{eq:phin} into \cref{eq:Jn} and recalling \cref{eq:r_S_def}, we have, for $0 \leq n \leq T/\tau-1$, 
	\begin{align}\label{eq:Jn_phase}
		\mathcal{J}^n 
		&= \sum_{k=0}^n \sum_{l \in \mathcal{T}_N} e^{ i k \tau \mu_l^2} \int_0^\tau (1-e^{is\mu_l^2}) \rmd s \widehat{\phi^k_l} e^{i\mu_l(x-a)} \notag \\
		&= \sum_{k=0}^n \sum_{l \in \mathcal{T}_N} e^{ i k \tau \mu_l^2} \delta_l \widehat{\phi^k_l} e^{i\mu_l(x-a)}. 
	\end{align}
	From \cref{eq:Jn_phase}, exchanging the order of summation, using summation by parts and recalling \cref{eq:r_S_def}, we obtain, for $0 \leq n \leq T/\tau-1$,  
	\begin{align}\label{eq:summation_by_parts}
		\mathcal{J}^n
		&= \sum_{l \in \mathcal{T}_N} \delta_l e^{i\mu_l(x-a)} \sum_{k=0}^n e^{i k \tau \mu_l^2}  \widehat{\phi^k_l} \notag \\
		&= \sum_{l \in \mathcal{T}_N} \delta_l e^{i\mu_l(x-a)} \left( S_{n, l}  \widehat{\phi^n_l} - \sum_{k=0}^{n-1} S_{k, l} \left(\widehat{\phi^{k+1}_l} - \widehat{\phi^k_l} \right) \right).  
	\end{align}
	From \cref{eq:summation_by_parts}, using Parseval's identity, Cauchy inequality and \cref{eq:rS}, we have, for $0 \leq n \leq T/\tau-1$, 
	\begin{align}\label{eq:est_J_LT_L2}
		\| \mathcal{J}^n \|_{L^2}^2 
		&=(b-a) \sum_{l \in \mathcal{T}_N} \delta_l^2 \left| S_{n, l}  \widehat{\phi^n_l} - \sum_{k=0}^{n-1} S_{k, l} \left(\widehat{\phi^{k+1}_l} - \widehat{\phi^k_l} \right) \right|^2 \notag\\
		&\lesssim \sum_{l \in \mathcal{T}_N} \delta_l^2 S_{n, l}^2 \left|\widehat{\phi^n_l} \right|^2 + \sum_{l \in \mathcal{T}_N} \delta_l^2 \left|\sum_{k=0}^{n-1} S_{k, l} \left(\widehat{\phi^{k+1}_l} - \widehat{\phi^k_l} \right) \right|^2 \notag\\
		&\lesssim \tau^2 \sum_{l \in \mathcal{T}_N} \left|\widehat{\phi^n_l} \right|^2 + \sum_{l \in \mathcal{T}_N} \delta_l^2 \sum_{k_1=0}^{n-1} \left|S_{k_1, l}\right|^2 \sum_{k_2=0}^{n-1} \left|\widehat{\phi^{k_2+1}_l} - \widehat{\phi^{k_2}_l}\right|^2 \notag\\
		&\lesssim \tau^2 \| \phi^n \|_{L^2}^2 + \sum_{k_1=0}^{n-1} \sum_{k_2=0}^{n-1} \sum_{l \in \mathcal{T}_N} \delta_l^2 \left|S_{k_1, l}\right|^2 \left|\widehat{\phi^{k_2+1}_l} - \widehat{\phi^{k_2}_l} \right|^2 \notag\\
		&\lesssim \tau^2 \| \phi^n \|_{L^2}^2 + n\tau^2 \sum_{k=0}^{n-1} \| \phi^{k+1} - \phi^{k} \|_{L^2}^2. 
	\end{align}
	Recalling \cref{eq:phin}, using the $L^2$-projection property of $ P_N$ and \cref{lem:B}, we have
	\begin{equation*}
		\| \phi^n \|_{L^2} \leq \| B(P_N \psi(t_n)) \|_{L^2} \leq C(M_2)\| \psi(t_n) \|_{L^2} \lesssim C(M_2), \quad 0 \leq n \leq T/\tau-1,
	\end{equation*}
	and, for $0 \leq k \leq T/\tau-2$, 
	\begin{align*}
		\| \phi^{k+1} - \phi^{k} \|_{L^2} 
		&= \| P_N B(P_N \psi(t_{k+1})) - P_N B(P_N \psi(t_k)) \|_{L^2} \\
		&\leq \| B(P_N \psi(t_{k+1})) - B(P_N \psi(t_k)) \|_{L^2} \\
		&\lesssim \| \psi(t_{k+1}) - \psi(t_{k}) \|_{L^2} \lesssim \tau \| \partial_t \psi \|_{L^\infty([t_k, t_{k+1}]; L^2)}, 
	\end{align*}
	which, inserted into \cref{eq:est_J_LT_L2}, yield
	\begin{equation}\label{eq:est_J_final}
		\| \mathcal{J}^n \|^2_{L^2} \lesssim \tau^2 + n^2 \tau^4 \lesssim \tau^2.   
	\end{equation}
	Inserting \cref{eq:est_J_final} into \cref{eq:error_eq_LT_L2}, we have
	\begin{equation}\label{eq:error_final_L2}
		\| e^{n+1} \|_{L^2} \lesssim \tau + h^2 + C_1 \tau \sum_{k=0}^{n} \| e^k \|_{L^2}, 
	\end{equation}
	where $ C_1 $ depends on $M_2$ and $ \max_{0 \leq k \leq n} \| \psi^k \|_{L^\infty} $, and can be controlled by discrete Gronwall's inequality and the standard argument of mathematical induction with the inverse equality $ \| \phi \|_{L^\infty} \leq C_\text{inv} h^{-\frac{d}{2}} \| \phi \|_{L^2} $ for all $\phi  \in X_N $ \cite{book_spectral}: 
	\begin{align}\label{eq:inverse}
		\| \psi^k \|_{L^\infty} 
		&\leq \| \psi^{k} - P_N \psi(t_k) \|_{L^\infty} + \| P_N \psi(t_k) \|_{L^\infty} \notag \\
		&\leq C_\text{inv} h^{-\frac{d}{2}} \| e^k \|_{L^2} + C \| P_N \psi(t_k) \|_{H^2} \leq C_\text{inv} h^{-\frac{d}{2}} \| e^k \|_{L^2} + C(M_2), 
	\end{align}
	where $ d $ is the dimension of the space, i.e. $ d=1 $ in the current case. To estimate $h^{-\frac{d}{2}} \| e^k \|_{L^2}$ in \cref{eq:inverse}, we also need to use the time step size restriction $\tau \lesssim h^2$ imposed in \cref{eq:Snl}. As a result, we obtain
	\begin{equation*}
		\| e^n \|_{L^2} \lesssim \tau + h^2, \quad 0 \leq n \leq T/\tau, 
	\end{equation*}
	which directly yields the optimal $L^2$-norm error bound in \cref{eq:LT_L2} by recalling \cref{eq:reduce}. To obtain the $H^1$-norm error bound in \cref{eq:LT_L2}, as mentioned in \cref{rem:H1_bound}, by the inverse inequality, the step size restriction $\tau \lesssim h^2$ and standard projection error estimates of $P_N$, we have, for $0 \leq n \leq T/\tau$, 
	\begin{equation}\label{eq:H1_bound_inverse}
		\| \psi(t_n) - \psi^n \|_{H^1} \leq \| \psi(t_n) - P_N \psi(t_n) \|_{H^1} + \| e^n \|_{H^1} \lesssim h + h^{-1} \| e^n \|_{L^2} \lesssim \tau^\frac{1}{2} + h,  
	\end{equation}
	which completes the proof. 
\end{proof}
\begin{remark}
	The key estimate \cref{eq:est_J_LT_L2} of $\mathcal{J}^n$ \cref{eq:Jn} can be understood as a time-discrete and global version of Lemma 3.6 in \cite{bao2023_EWI}. 
\end{remark}

\subsection{Optimal $ H^1 $-norm error bound}
In this subsection, we shall show the optimal $ H^1 $-norm error bound \cref{eq:LT_H1} for the LTFS method \cref{full_discrete_LT} under the assumptions that $ V \in W^{1, 4}(\Omega) \cap H^1_\text{per}(\Omega) $, $ \sigma \geq 1/2 $ and $\psi \in C([0, T]; H_\text{per}^3(\Omega)) \cap C^1([0, T]; H^1(\Omega))$. We define a constant 
\begin{equation*}
	M_3 = \max\left \{ \| V \|_{W^{1, 4}}, \| \psi \|_{L^\infty([0, T]; H^3(\Omega))}, \| \partial_t \psi \|_{L^\infty([0, T]; H^1(\Omega))}, \| \psi \|_{L^\infty([0, T]; L^\infty(\Omega))} \right \}. 
\end{equation*}
Note that $ W^{1, 4}(\Omega) \hookrightarrow L^\infty(\Omega) $ when $ 1 \leq d \leq 3 $. The proof follows exactly the same framework as the proof of the optimal $L^2$-norm error bound \cref{eq:LT_L2}. 

We start with the estimates for the operator $B$ and present the $H^1$-norm counterparts of \cref{lem:B,lem:dB1,lem:Phi_B^tau1} under the above higher regularity assumptions. The proof of these lemmas can be found in \cite{bao2023_semi_smooth} (see Lemmas 4.2, 4.3 and 4.5 there). 

\begin{lemma}\label{lem:B_2}
	Under the assumptions that $ V \in W^{1, 4}(\Omega) $ and $ \sigma \geq 1/2 $, for any $ v, w \in H^2(\Omega) $ such that $ \| v \|_{H^2} \leq M $, $ \| w \|_{H^2} \leq M $, we have
	\begin{equation}\label{lem:diff_B2}
		\| B(v) - B(w) \|_{H^1} \leq C(\| V \|_{W^{1, 4}}, M) \| v - w \|_{H^1}. 
	\end{equation}
	In particular, when $w = 0$, we have
	\begin{equation}\label{lem:B2}
		\| B(v) \|_{H^1} \leq C(\| V \|_{W^{1, 4}}, M) \| v \|_{H^1}. 
	\end{equation}
\end{lemma}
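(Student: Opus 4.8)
The plan is to exploit the product structure of $B$ in \cref{eq:B} and reduce the $H^1$ estimate to pointwise bounds on the maps $z\mapsto f(|z|^2)$ and $z\mapsto G(z)$. Writing
\[
	B(v)-B(w) = -i\,V(v-w) - i\bigl(\phi(v)-\phi(w)\bigr),\qquad \phi(u):=f(|u|^2)u = \beta|u|^{2\sigma}u,
\]
it suffices to control $\|V(v-w)\|_{H^1}$ and $\|\phi(v)-\phi(w)\|_{H^1}$. For the potential term, $\|V(v-w)\|_{L^2}\le\|V\|_{L^\infty}\|v-w\|_{L^2}$ and, after the product rule $\partial_x(V(v-w))=V'(v-w)+V\partial_x(v-w)$, the first summand is estimated by $\|V'\|_{L^4}\|v-w\|_{L^4}$ and the second by $\|V\|_{L^\infty}\|\partial_x(v-w)\|_{L^2}$; invoking $W^{1,4}(\Omega)\hookrightarrow L^\infty(\Omega)$ and $H^1(\Omega)\hookrightarrow L^4(\Omega)$ (both valid for $d\le 3$) gives $\|V(v-w)\|_{H^1}\lesssim\|V\|_{W^{1,4}}\|v-w\|_{H^1}$. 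This is exactly where the $W^{1,4}$-hypothesis is used.

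For the nonlinear term I would use the chain rule $\partial_x\phi(u)=(1+\sigma)f(|u|^2)\partial_x u + G(u)\overline{\partial_x u}$, which is consistent with \cref{eq:dB_def} and valid as an $L^2$-identity for $u\in H^2(\Omega)$ once $\sigma\ge 1/2$ (so that $\phi\in C^1(\C)$), and then split
\[
\begin{aligned}
	\partial_x\bigl(\phi(v)-\phi(w)\bigr) ={}& (1+\sigma)\Bigl[f(|v|^2)\partial_x(v-w) + \bigl(f(|v|^2)-f(|w|^2)\bigr)\partial_x w\Bigr] \\
	&+ G(v)\overline{\partial_x(v-w)} + \bigl(G(v)-G(w)\bigr)\overline{\partial_x w}.
\end{aligned}
\]
The "fixed-coefficient" terms follow from \cref{eq:f_G_bound} and $H^2(\Omega)\hookrightarrow L^\infty(\Omega)$, which give $\|f(|v|^2)\|_{L^\infty}+\|G(v)\|_{L^\infty}\le C(M)$, paired with $\|\partial_x(v-w)\|_{L^2}\le\|v-w\|_{H^1}$. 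The "coefficient-difference" terms are the crux: here I would establish the pointwise Lipschitz-type bound
\[
	\bigl|f(|z_1|^2)-f(|z_2|^2)\bigr| + \bigl|G(z_1)-G(z_2)\bigr| \le C(\sigma)\,\bigl(|z_1|+|z_2|\bigr)^{2\sigma-1}\,|z_1-z_2|,\qquad z_1,z_2\in\C,
\]
which holds precisely because $|z|^{2\sigma}$ and $|z|^{2\sigma-2}z^2=|z|^{2\sigma}e^{2i\arg z}$ are Lipschitz on bounded sets once $2\sigma\ge 1$ (their gradients away from $0$ are $O(|z|^{2\sigma-1})$, which stays bounded near the origin only when $2\sigma-1\ge 0$). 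Combined with $H^2\hookrightarrow L^\infty$, this yields $\|f(|v|^2)-f(|w|^2)\|_{L^4}+\|G(v)-G(w)\|_{L^4}\le C(M)\|v-w\|_{L^4}\le C(M)\|v-w\|_{H^1}$, and pairing with $\|\partial_x w\|_{L^4}\le C\|w\|_{H^2}\le C(M)$ (via $H^1\hookrightarrow L^4$) closes the estimate. Collecting all contributions proves \cref{lem:diff_B2}, and \cref{lem:B2} follows by setting $w=0$ and using $B(0)=0$.

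The main obstacle is the displayed pointwise estimate: both $f(|\cdot|^2)$ and $G$ fail to be Lipschitz at the origin when $0<\sigma<1/2$, so this is the exact point at which $\sigma\ge 1/2$ cannot be relaxed. Its proof proceeds by distinguishing the case where the segment $[z_1,z_2]$ avoids the origin (mean value inequality) from the case $\min(|z_1|,|z_2|)\lesssim|z_1-z_2|$, where one bounds the difference crudely by $|f(|z_1|^2)|+|f(|z_2|^2)|\lesssim|z_1-z_2|^{2\sigma}$ (and similarly for $G$), then absorbs the extra power using $|z_1-z_2|\le 2M$ and $2\sigma\ge 1$. A secondary technical point is that the difference must be measured in the $H^1$-norm while the sizes are controlled only in $H^2$; this dictates the bookkeeping above, in which the rough factor (a coefficient difference) is always paired, via the $L^4$–$L^4$ Hölder inequality, against the $H^2$-controlled factor $\partial_x w$, never against $\partial_x(v-w)$. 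These details are carried out in \cite{bao2023_semi_smooth}.
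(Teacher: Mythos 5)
Your proof is correct and follows essentially the same route as the source: the paper itself omits the argument and defers to Lemmas 4.2--4.3 of \cite{bao2023_semi_smooth}, whose strategy is exactly your decomposition into a $W^{1,4}$-potential term handled by the product rule with $W^{1,4}\hookrightarrow L^\infty$, $H^1\hookrightarrow L^4$, and a nonlinear term handled by pairing the coefficient differences of $f(|\cdot|^2)$ and $G$ against the $H^2$-controlled factor via $L^4$--$L^4$ H\"older. Your key pointwise Lipschitz bound is precisely the inequality the paper later quotes as \cref{eq:diff_f}, and your identification of $\sigma\ge 1/2$ as the exact threshold for that bound matches the paper's discussion.
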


\begin{lemma}\label{lem:dB2}
	Under the assumptions that $ V \in W^{1, 4}(\Omega) \cap H^1_\text{per}(\Omega) $ and $ \sigma \geq 1/2 $, for any $ v, w \in H^1_\text{per}(\Omega) $ satisfying $ \| v \|_{L^\infty} \leq M $, $ \| w \|_{L^\infty} \leq M $, we have $dB(v)[w] \in H^1_\text{per}(\Omega)$ and
	\begin{equation*}
		\| dB(v)[w] \|_{H^1} \leq C(\| V \|_{W^{1, 4}}, M) (\| v \|_{H^1} + \| w \|_{H^1}). 
	\end{equation*}
\end{lemma}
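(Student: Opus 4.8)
The plan is to work directly from the explicit formula \eqref{eq:dB_def} for the G\^ateaux derivative, namely $dB(v)[w]=-i[Vw+(1+\sigma)f(|v|^2)w+G(v)\overline{w}]$, and to control the two constituents of the $H^1$-norm separately. For the $L^2$ part I would simply invoke \cref{lem:dB1}: since $W^{1,4}(\Omega)\hookrightarrow L^\infty(\Omega)$ for $1\le d\le 3$, the hypotheses here imply those of \cref{lem:dB1}, so $\|dB(v)[w]\|_{L^2}\le C(\|V\|_{L^\infty},M)\|w\|_{L^2}\le C(\|V\|_{W^{1,4}},M)\|w\|_{H^1}$. The entire work therefore reduces to estimating $\|\partial_x(dB(v)[w])\|_{L^2}$, after which $dB(v)[w]\in H^1$ follows because each weak derivative lies in $L^2$, and periodicity is inherited from $V,v,w\in H^1_{\text{\rm per}}(\Omega)$ together with the continuity of $f(|\cdot|^2)$ and $G$.

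Next I would apply the product rule to the three summands and sort the resulting terms into those in which the derivative falls on $V$ or on $w$, and those in which it falls on $f(|v|^2)$ or $G(v)$. In the first group, $Vw'$, $f(|v|^2)w'$ and $G(v)\overline{w'}$ are handled by pulling out $\|V\|_{L^\infty}$ and the pointwise bound $|f(|v|^2)|+|G(v)|\lesssim|v|^{2\sigma}\le M^{2\sigma}$ from \eqref{eq:f_G_bound}, leaving an $L^2$ factor $\|w'\|_{L^2}\le\|w\|_{H^1}$. The genuinely dimension-sensitive term is $V'w$: here I would use H\"older with the splitting $\tfrac12=\tfrac14+\tfrac14$, writing $\|V'w\|_{L^2}\le\|V'\|_{L^4}\|w\|_{L^4}\lesssim\|V\|_{W^{1,4}}\|w\|_{H^1}$, where $\|w\|_{L^4}\lesssim\|w\|_{H^1}$ uses the embedding $H^1(\Omega)\hookrightarrow L^4(\Omega)$ valid for $d\le 3$. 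This is precisely where the assumption $V\in W^{1,4}$ (rather than merely $V\in H^1$) is consumed, and why the argument stays dimension-robust.

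For the second group I would establish the pointwise chain-rule bound $|\partial_x f(|v|^2)|+|\partial_x G(v)|\lesssim|v|^{2\sigma-1}\,|\partial_x v|$, obtained by differentiating $\beta|v|^{2\sigma}$ and $\beta\sigma|v|^{2\sigma-2}v^2$ and using $|\partial_x|v||\le|\partial_x v|$. The hypothesis $\sigma\ge 1/2$ enters \emph{exactly here}: the exponent $2\sigma-1\ge0$ makes $|v|^{2\sigma-1}\le M^{2\sigma-1}$ bounded, so the terms $\partial_x(f(|v|^2))w$ and $\partial_x(G(v))\overline{w}$ satisfy $\|\cdot\|_{L^2}\le C M^{2\sigma-1}\|w\|_{L^\infty}\|\partial_x v\|_{L^2}\le C(M)\|v\|_{H^1}$, using the given bound $\|w\|_{L^\infty}\le M$. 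Summing the two groups yields $\|\partial_x(dB(v)[w])\|_{L^2}\le C(\|V\|_{W^{1,4}},M)(\|v\|_{H^1}+\|w\|_{H^1})$, which combined with the $L^2$ estimate completes the proof.

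The step I expect to be the main obstacle is the rigorous justification of the chain rule for the non-smooth compositions $x\mapsto|v(x)|^{2\sigma}$ and $x\mapsto G(v(x))$ when $\sigma$ is non-integer and $v$ may vanish, since then $t\mapsto|t|^{2\sigma}$ is only H\"older/Lipschitz and the classical chain rule does not apply verbatim. I would resolve this by an approximation argument, regularizing the modulus (e.g.\ replacing $|v|$ by $(|v|^2+\vep)^{1/2}$), verifying the derivative identity and the bound $|\partial_x f(|v|^2)|\lesssim|v|^{2\sigma-1}|\partial_x v|$ uniformly in $\vep$, and passing to the limit $\vep\to0$; the threshold $\sigma\ge 1/2$ guarantees that the limiting right-hand side is in $L^2$ so that the weak derivative exists. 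The requisite composition estimates are exactly those already recorded in \cite{bao2023_semi_smooth}, which I would cite for these technical points.
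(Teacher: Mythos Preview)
Your proposal is correct and follows precisely the route that the paper defers to: the paper does not supply its own argument for this lemma but cites \cite{bao2023_semi_smooth} (Lemmas 4.2, 4.3 and 4.5), where the proof proceeds exactly by expanding \eqref{eq:dB_def}, handling $V'w$ via H\"older with the $L^4$--$L^4$ splitting and the embedding $H^1\hookrightarrow L^4$ (valid for $d\le 3$), and controlling $\partial_x f(|v|^2)$ and $\partial_x G(v)$ through the pointwise bound $\lesssim |v|^{2\sigma-1}|\partial_x v|$ together with $\|w\|_{L^\infty}\le M$ and $\sigma\ge 1/2$. Your identification of the chain-rule justification at the zero set of $v$ as the only delicate point, and your plan to resolve it by regularization, also matches the treatment in that reference.
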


\begin{lemma}\label{lem:Phi_B^tau2}
	Under the assumptions that $ V \in W^{1, 4}(\Omega) $ and $ \sigma \geq 1/2 $, for any $ v \in H^1(\Omega) $ satisfying $ \| v \|_{L^\infty} \leq M $, we have
	\begin{equation*}
		\| \Phi_B^\tau(v) \|_{H^1} \leq (1 + C(\| V \|_{W^{1, 4}}, M) \tau) \| v \|_{H^1}. 
	\end{equation*}
\end{lemma}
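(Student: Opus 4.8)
The plan is to use that the nonlinear flow is pure pointwise multiplication by a unimodular factor: writing $g := \phiu{v} = V + f(|v|^2)$, which is real-valued, we have $\Phi_B^\tau(v) = v\,e^{-i\tau g}$ with $|e^{-i\tau g}| = 1$. Hence the $L^2$-component costs nothing, $\|\Phi_B^\tau(v)\|_{L^2} = \|v\|_{L^2}$, and the whole task is to control the spatial derivative
\[
	\partial_x \Phi_B^\tau(v) = (\partial_x v)\,e^{-i\tau g} - i\tau\,v\,(\partial_x g)\,e^{-i\tau g}.
\]
Before estimating, I must justify this formula, and this is the one genuinely delicate step. It requires $g \in H^1(\Omega)$, i.e. that $f(|v|^2) = \beta|v|^{2\sigma}$ is weakly differentiable with $\partial_x f(|v|^2) = 2\sigma\beta\,|v|^{2\sigma-2}\,\mathrm{Re}(\overline v\,\partial_x v) \in L^2(\Omega)$. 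This is exactly where $\sigma \ge 1/2$ enters: since $2\sigma - 1 \ge 0$ and $\|v\|_{L^\infty} \le M$, the pointwise bound $|\partial_x f(|v|^2)| \le 2\sigma|\beta|\,M^{2\sigma-1}\,|\partial_x v|$ forces membership in $L^2$, whereas for $\sigma < 1/2$ the exponent is negative and no such control is available. I would make the chain rule rigorous by regularizing $f$ near the origin (or mollifying $v$), applying the Sobolev chain rule, and passing to the limit via the uniform bound above and dominated convergence — or simply invoke the corresponding statement in \cite{bao2023_semi_smooth}.

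Granting the derivative formula, the estimate becomes a routine product-rule computation. Taking $L^2$-norms gives $\|\partial_x\Phi_B^\tau(v)\|_{L^2} \le \|\partial_x v\|_{L^2} + \tau\,\|v\,\partial_x g\|_{L^2}$, and I split $\partial_x g = \partial_x V + 2\sigma\beta\,|v|^{2\sigma-2}\,\mathrm{Re}(\overline v\,\partial_x v)$. For the potential contribution I use H\"older together with the Sobolev embedding $H^1(\Omega) \hookrightarrow L^4(\Omega)$, valid for $d = 1, 2, 3$, to obtain
\[
	\|v\,\partial_x V\|_{L^2} \le \|v\|_{L^4}\,\|\partial_x V\|_{L^4} \le C\,\|V\|_{W^{1,4}}\,\|v\|_{H^1};
\]
this is precisely where the hypothesis $V \in W^{1,4}$ (rather than merely $H^1$) is used. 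For the nonlinear contribution the pointwise bound $\bigl|2\sigma\beta\,v\,|v|^{2\sigma-2}\,\mathrm{Re}(\overline v\,\partial_x v)\bigr| \le 2\sigma|\beta|\,M^{2\sigma}\,|\partial_x v|$ gives $\|v\,\partial_x f(|v|^2)\|_{L^2} \le C(M)\,\|\partial_x v\|_{L^2} \le C(M)\,\|v\|_{H^1}$. Combining, $\|v\,\partial_x g\|_{L^2} \le C(\|V\|_{W^{1,4}}, M)\,\|v\|_{H^1}$.

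Finally I would assemble everything: with $C := C(\|V\|_{W^{1,4}}, M)$ and using $\|\partial_x v\|_{L^2} \le \|v\|_{H^1}$,
\[
	\|\Phi_B^\tau(v)\|_{H^1}^2 = \|v\|_{L^2}^2 + \|\partial_x\Phi_B^\tau(v)\|_{L^2}^2 \le \|v\|_{L^2}^2 + \bigl(\|\partial_x v\|_{L^2} + C\tau\|v\|_{H^1}\bigr)^2 \le (1 + C\tau)^2\,\|v\|_{H^1}^2,
\]
and taking square roots yields the claim. The only obstacle worth flagging is the weak-differentiability/chain-rule step for the low-regularity nonlinearity and potential; once the derivative formula is secured, the remaining estimates are elementary.
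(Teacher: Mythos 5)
Your proof is correct and follows essentially the same route the paper takes (the paper defers this lemma to Lemma 4.5 of the cited reference, but its own analogous computation in Proposition 3.7 proceeds identically): differentiate the explicit unimodular-multiplier formula $\Phi_B^\tau(v)=v\,e^{-i\tau(V+f(|v|^2))}$, use $|e^{i\theta}|=1$ for the $L^2$ part, the pointwise bound $|\partial_x f(|v|^2)|\lesssim M^{2\sigma-1}|\partial_x v|$ valid for $\sigma\ge 1/2$, and H\"older with the embedding $H^1(\Omega)\hookrightarrow L^4(\Omega)$ to absorb $\partial_x V\in L^4$. No gaps.
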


Similar to \cref{prop:dominant_error_LT_L2}, we have
\begin{proposition}\label{prop:dominant_error_LT_H1}
	Under the assumptions $ V \in W^{1, 4}(\Omega) \cap H^1_\text{per}(\Omega) $, $ \sigma \geq 1/2 $ and $\psi \in C([0, T]; H_\text{per}^3(\Omega)) \cap C^1([0, T]; H^1(\Omega))$, for the local error $ \mathcal{E}^n $ defined in \cref{eq:E^n_def}, we have
	\begin{equation*}
		\mathcal{E}^n = \mathcal{E}_1^n + \mathcal{E}_2^n, 
	\end{equation*}
	where
	\begin{equation*}
		\| \mathcal{E}_1^n \|_{H^1} \lesssim \tau^2 + \tau h^2, \quad \mathcal{E}_2^n = -e^{i\tau\Delta}\int_0^\tau (I-e^{-is\Delta}) P_N B(P_N \psi(t_n)) \rmd s. 
	\end{equation*}
\end{proposition}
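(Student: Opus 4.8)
The plan is to mirror the proof of \cref{prop:dominant_error_LT_L2} exactly, but carry all estimates in the $H^1$-norm instead of the $L^2$-norm, relying on the higher-regularity stability lemmas \cref{lem:B_2,lem:dB2,lem:Phi_B^tau2} in place of \cref{lem:B,lem:dB1,lem:Phi_B^tau1}. First I would write down Duhamel's formula \cref{eq:duhamel} and the first-order Taylor expansion \cref{eq:taylor} of $\mathcal{S}_1^\tau(P_N\psi(t_n))$ exactly as before, and subtract them to obtain the same decomposition $\mathcal{E}^n = e_1 + e_2 + e_3$ with $e_1, e_2, e_3$ given by \cref{e3}. Since $e^{it\Delta}$ and $P_N$ are bounded on $H^1_\text{per}(\Omega)$, the terms $e_1$ and $e_2$ are controlled by applying \cref{lem:dB2}, \cref{lem:B_2} and \cref{lem:Phi_B^tau2}: the integrand in $e_1$ involves $dB$ applied to $e^{i(s-\vsigma)\Delta}B(\psi(t_n+\vsigma))$, whose $H^1$-norm is bounded via \cref{lem:B2} and then \cref{lem:dB2}, using that $\psi \in C([0,T];H^3_\text{per})\hookrightarrow L^\infty$ so the $L^\infty$-hypotheses in those lemmas are met; a double time integral then gives $\|e_1\|_{H^1}\lesssim\tau^2$, and similarly $\|e_2\|_{H^1}\lesssim\tau^2$.

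Next I would split $e_3$ into $e_3^1 + e_3^2 + e_3^3$ exactly as in \cref{eq:decomp_e3}. For $e_3^1 = \int_0^\tau e^{i(\tau-s)\Delta}P_N(B(e^{is\Delta}\psi(t_n)) - B(\psi(t_n)))\,\rmd s$, I would use the Lipschitz bound \cref{lem:diff_B2} together with $\|(I - e^{is\Delta})\phi\|_{H^1}\lesssim s\|\phi\|_{H^3}$ for $\phi \in H^3_\text{per}(\Omega)$ (the $H^1$-analogue of the $L^2$ estimate used before, again standard, e.g.\ \cite{BBD,bao2022}), which gives $\|e_3^1\|_{H^1}\lesssim\tau^2$; here I must check that $e^{is\Delta}\psi(t_n)$ and $\psi(t_n)$ stay in a fixed $H^2$-ball so that the constant in \cref{lem:diff_B2} is uniform, which follows from $\psi\in C([0,T];H^3_\text{per})$ and the isometry of $e^{is\Delta}$ on $H^2$. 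For $e_3^2 = \int_0^\tau e^{i(\tau-s)\Delta}P_N(B(\psi(t_n)) - B(P_N\psi(t_n)))\,\rmd s$, I would again use \cref{lem:diff_B2} and the standard projection bound $\|\psi(t_n) - P_N\psi(t_n)\|_{H^1}\lesssim h^2\|\psi(t_n)\|_{H^3}$ to get $\|e_3^2\|_{H^1}\lesssim\tau h^2$. Finally $e_3^3 = -e^{i\tau\Delta}\int_0^\tau(I - e^{-is\Delta})P_N B(P_N\psi(t_n))\,\rmd s$ is left untouched and identified with $\mathcal{E}_2^n$. Setting $\mathcal{E}_1^n = e_1 + e_2 + e_3^1 + e_3^2$ yields $\|\mathcal{E}_1^n\|_{H^1}\lesssim\tau^2+\tau h^2$ and completes the proof.

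The main obstacle, and the reason this is not entirely trivial, is verifying that the $L^\infty$-boundedness hypotheses required by \cref{lem:B_2,lem:dB2,lem:Phi_B^tau2} hold uniformly along all the intermediate states appearing in $e_1, e_2, e_3$: these lemmas require $H^2$- or $L^\infty$-control of arguments such as $e^{i(s-\vsigma)\Delta}\psi(t_n+\vsigma)$, $\Phi_B^{\theta\tau}(P_N\psi(t_n))$, and $P_N\psi(t_n)$. The linear propagator $e^{it\Delta}$ is an isometry on every $H^m_\text{per}$, $P_N$ is bounded on $H^m_\text{per}$ uniformly in $N$, and \cref{lem:Phi_B^tau2} shows $\Phi_B^\tau$ grows at most by a factor $1 + C\tau$ in $H^1$; since all these are evaluated on data controlled by $M_3$, the relevant norms stay in a fixed ball of radius depending only on $M_3$ and $T$, so every constant $C(\|V\|_{W^{1,4}}, M)$ that appears can be taken to depend only on $M_3$. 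Once this bookkeeping is in place the estimates are routine, so I would only sketch the modifications from the proof of \cref{prop:dominant_error_LT_L2} and refer to \cite{bao2023_semi_smooth} for the parallel computations.
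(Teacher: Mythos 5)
Your proposal follows the paper's proof essentially verbatim: the same decomposition $\mathcal{E}^n = e_1+e_2+e_3$ with $e_3 = e_3^1+e_3^2+e_3^3$, the same use of \cref{lem:B_2,lem:dB2,lem:Phi_B^tau2} in place of their $L^2$ counterparts, the $H^1$-version $\|(I-e^{is\Delta})\phi\|_{H^1}\lesssim s\|\phi\|_{H^3}$, and the identification $\mathcal{E}_1^n=e_1+e_2+e_3^1+e_3^2$, $\mathcal{E}_2^n=e_3^3$. The extra bookkeeping you do on the uniform $L^\infty$/$H^2$ control of the intermediate states is exactly the implicit content of the paper's sketch, so the argument is correct and coincides with the paper's.
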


\begin{proof}
	Again, we only sketch the proof here and the details can be found in \cite{bao2023_semi_smooth}. We have the same error decomposition as \cref{eq:error_decomp} in the proof of \cref{prop:dominant_error_LT_L2}. By \cref{lem:B_2,lem:dB2,lem:Phi_B^tau2}, Sobolev embeddings and the boundedness of $e^{it\Delta}$ and $P_N$ on $H^1_\text{per}(\Omega)$, we have
	\begin{equation*}
		\| e_1 \|_{H^1} \lesssim \tau^2, \quad \| e_2 \|_{H^1} \lesssim \tau^2. 
	\end{equation*}
	For $ e_3^1 $ and $ e_3^2 $ defined in \cref{eq:decomp_e3}, using \cref{lem:diff_B2}, the boundedness of $ P_N $ and $ e^{it\Delta} $, the standard projection error estimates of $P_N$ and 	$\| (I - e^{i t \Delta}) \phi \|_{H^1} \lesssim t \| \phi \|_{H^3}$ for all $ \phi \in H^3_\text{per}(\Omega) $, we have 
	\begin{equation*}
		\| e_3^1 \|_{H^1} \lesssim \tau^2, \quad \| e_3^2 \|_{H^1} \lesssim \tau h^2. 
	\end{equation*}
	The conclusion follows again by letting $ \mathcal{E}_1^n = e_1 + e_2 + e_3^1 + e_3^2 $ and $ \mathcal{E}_2^n = e_3^3 $. 
\end{proof}

We can also establish the following $L^\infty$-conditional $H^1$-stability estimate of the nonlinear flow. 
\begin{proposition}\label{prop:stability_LT_H1}
	Let $ v, w \in X_N $ such that $ \| v \|_{L^\infty} \leq M $, $ \| w \|_{L^\infty} \leq M $ and $ \| v \|_{H^2} \leq M_1 $. Then we have
	\begin{equation*}
		\| P_N (\Phi_B^\tau(v) - v) - P_N (\Phi_B^\tau(w) - w) \|_{H^1} \leq C(\| V \|_{W^{1, 4}}, M, M_1) \tau \| v - w \|_{H^1}. 
	\end{equation*}
\end{proposition}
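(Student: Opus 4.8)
The plan is to mimic exactly the structure of the proof of \cref{prop:stability_L2}, but now tracking one spatial derivative throughout. First I would write $u^\theta = (1-\theta)w + \theta v$ for $0 \leq \theta \leq 1$ and expand
\begin{equation*}
	(\Phi_B^\tau(v) - v) - (\Phi_B^\tau(w) - w) = \int_0^1 \partial_\theta\left(\Phi_B^\tau(u^\theta) - u^\theta\right) \rmd\theta,
\end{equation*}
obtaining the same integral representation as in \cref{eq:diff_Phi_B}: a term of the form $(v-w)(e^{-i\tau\phiu{u^\theta}} - 1)$ plus $-i\tau \int_0^1 \left(\sigma f(|u^\theta|^2)(v-w) + G(u^\theta)\overline{(v-w)}\right) e^{-i\tau\phiu{u^\theta}}\rmd\theta$. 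The key observation is that each summand carries a prefactor which is either $O(\tau)$ pointwise (namely $e^{-i\tau\phiu{u^\theta}}-1$, using $|e^{i\theta}-1|\leq|\theta|$) or an explicit factor $\tau$, so after differentiating in $x$ by the product rule, the terms where the derivative falls on $(v-w)$ or $\overline{(v-w)}$ are controlled by $\tau\|v-w\|_{H^1}$ provided the accompanying factors (which involve $V$, $f(|u^\theta|^2)$, $G(u^\theta)$, and $\phiu{u^\theta}$) are bounded in a suitable norm.

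The next step is therefore to collect the auxiliary bounds needed. Since $v, w \in X_N$ with $\|v\|_{L^\infty}, \|w\|_{L^\infty}\leq M$ we have $\|u^\theta\|_{L^\infty}\leq M$ uniformly in $\theta$; combined with $V \in W^{1,4}$ and $\sigma \geq 1/2$, \cref{lem:B_2,lem:dB2} (or direct computation as in their proofs) give $\|f(|u^\theta|^2)\|_{W^{1,4}}$, $\|G(u^\theta)\|_{W^{1,4}}$, and $\|\phiu{u^\theta}\|_{W^{1,4}}$ bounded by $C(\|V\|_{W^{1,4}}, M, M_1)$ — here the hypothesis $\|v\|_{H^2}\leq M_1$ enters, because $\partial_x f(|u^\theta|^2)$ contains $|u^\theta|^{2\sigma-2}$ (or $|u^\theta|^{2\sigma-1}$-type factors) times $\partial_x u^\theta$, and to estimate $\|\partial_x u^\theta\|_{L^4}$ one uses $\|u^\theta\|_{H^2}\leq \max(\|v\|_{H^2},\text{\emph{something}})$; note $u^\theta$ interpolates between $w$ (only $L^\infty$-bounded) and $v$ ($H^2$-bounded), so one must be slightly careful and write $u^\theta = w + \theta(v-w)$ and split accordingly, but since we always pair a derivative of such a factor against $(v-w)$ in $L^2$ or against a factor bounded in $L^\infty$, Hölder with the $L^4$–$L^4$ pairing closes it. When the $x$-derivative hits the phase $e^{-i\tau\phiu{u^\theta}}$ one gains an explicit $\tau$: $\partial_x e^{-i\tau\phiu{u^\theta}} = -i\tau (\partial_x\phiu{u^\theta}) e^{-i\tau\phiu{u^\theta}}$, and $\partial_x\phiu{u^\theta} \in L^4$, so this is $\lesssim \tau \|v-w\|_{L^4}\|\partial_x\phiu{u^\theta}\|_{L^4} \lesssim \tau\|v-w\|_{H^1}$ by Sobolev embedding $H^1\hookrightarrow L^\infty$ in $d\leq 3$ (or $H^1 \hookrightarrow L^4$).

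Having bounded $\|(\Phi_B^\tau(v)-v)-(\Phi_B^\tau(w)-w)\|_{H^1} \leq C(\|V\|_{W^{1,4}}, M, M_1)\tau\|v-w\|_{H^1}$, I would finish by invoking the boundedness of $P_N$ on $H^1_\text{per}(\Omega)$ (the $L^2$-projection is $H^1$-stable since it is diagonal in the Fourier basis), which gives the claimed inequality.

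The main obstacle I expect is the bookkeeping of the terms in which the $x$-derivative lands on $f(|u^\theta|^2)$ or $G(u^\theta)$ when $\sigma$ is small (close to $1/2$): the factor $|z|^{2\sigma-2}z^2$ in $G$ is only Hölder-type continuous near $z=0$, so its derivative $\partial_x G(u^\theta)$ must be handled through the identity $\nabla(|z|^{2\sigma-2}z^2) = $ (bounded function of $z$) $\cdot \nabla z + $ (bounded)$\cdot\nabla\bar z$ valid for $\sigma \geq 1/2$ — this is exactly the content of the proof of \cref{lem:dB2} in \cite{bao2023_semi_smooth}, and I would simply quote it. The other mild subtlety is the $\theta$-dependence: one should verify the bounds are uniform in $\theta\in[0,1]$ before integrating, which is immediate once $u^\theta$ is written as $w + \theta(v-w)$ and one uses $\|u^\theta\|_{L^\infty}\leq M$ and (for the $H^2$-type norm appearing via $v$) the convexity/triangle inequality; since only $v$ is assumed $H^2$-bounded, the cleanest route is to keep $w$ in $L^\infty$ and $L^4$ (its gradient in $L^4$ is not needed because wherever $\partial_x w$ would appear it is inside $\partial_x(v-w)$, already accounted for, or multiplied by an explicit $\tau$ together with an $L^\infty$-bounded phase derivative that only involves $\partial_x\phiu{u^\theta}$, which does require $\|w\|_{H^1}$ — hence the statement's hypotheses include only $\|v\|_{L^\infty},\|w\|_{L^\infty},\|v\|_{H^2}$, meaning the $\partial_x\phiu{u^\theta}$ estimate must route through $\|v\|_{H^2}$ and $\|v-w\|_{H^1}$ rather than $\|w\|_{H^1}$, which is consistent since $w = v - (v-w)$).
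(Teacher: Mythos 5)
Your proposal is correct and follows essentially the same route as the paper's proof: the same integral representation \cref{eq:diff_Phi_B} with $u^\theta=(1-\theta)w+\theta v$, the same product-rule decomposition of the $x$-derivative into terms carrying either the pointwise $O(\tau)$ factor $e^{-i\tau\phiu{u^\theta}}-1$ or an explicit $\tau$, the same H\"older/Sobolev pairings with the pointwise bounds \cref{fbound}, and the same device of routing $\partial_x u^\theta$ through $\partial_x v$ and $\partial_x(v-w)$ so that only $\|v\|_{H^2}$ is needed. The subtleties you flag (the derivative of $G$ for $\sigma$ near $1/2$, uniformity in $\theta$, and avoiding $\|w\|_{H^1}$) are exactly the ones the paper's estimates \cref{W1_est,W2_est,W3_est} address.
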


\begin{proof}
	From \cref{eq:diff_Phi_B}, we have
	\begin{equation}\label{H1_stab_decomp}
		\partial_x (\Phi_B^\tau(v) - v) - \partial_x (\Phi_B^\tau(w) - w) = \int_0^1 (W_1 -i\tau W_2 -i\tau W_3) \rmd \theta, 
	\end{equation}
	where, recalling $\phiu{u^\theta} = V + f(|u^\theta|^2)$ in \cref{eq:B} and $u^\theta = (1-\theta)w + \theta v \ (0 \leq  \theta \leq 1)$,  
	\begin{equation}\label{W_def}
		\begin{aligned}
			&W_1 = \partial_x \left [ (v-w) \left ( e^{-i\tau \phiu{u^\theta}} - 1 \right )\right ], \quad  W_2 = \partial_x \left ( \sigma f(|u^\theta|^2) (v-w) e^{-i\tau \phiu{u^\theta}} \right), \\
			&W_3 =\partial_x \left ( G(u^\theta) \overline{(v - w)} e^{-i\tau \phiu{u^\theta}} \right). 
		\end{aligned}
	\end{equation}
	For $W_1$ in \cref{W_def}, by direct calculation, we have
	\begin{align}\label{W1}
		W_1 
		&= \partial_x (v-w) \left ( e^{-i\tau \phiu{u^\theta} } - 1 \right ) \notag \\
		&\quad - i \tau (v-w) ( \partial_x V + f^\prime(|u^\theta|^2) (u^\theta \partial_x \overline{u^\theta} + \overline{u^\theta} \partial_x u^\theta ) ) e^{-i\tau \phiu{u^\theta}},  
	\end{align}
	where $ f'(|z|^2)z $ and $ f'(|z|^2)\overline{z} $ with $z \in \C$ are defined to be $ 0 $ when $ z = 0 $ and $\sigma \geq 1/2$. From \cref{W1}, recalling $|e^{i \theta} - 1| \leq \theta$ for all $\theta \in \R$ and (by the convention $|z|^0 \equiv 1, z \in \C$)
	\begin{equation}\label{fbound}
		|f(|z|^2)| \lesssim |z|^{2\sigma}, \quad |f^\prime(|z|^2)z| + |f^\prime(|z|^2)\overline{z}| \lesssim |z|^{2\sigma - 1}, \quad \sigma \geq \frac{1}{2}, \quad z \in \C, 
	\end{equation}
	we have, by H\"older's inequality and Sobolev embedding, 
	\begin{align}\label{W1_est}
		\| W_1 \|_{L^2}
		&\leq \tau \| \phiu{u^\theta} \|_{L^\infty} \| \partial_x (v - w) \|_{L^2} + \tau \left \|  (v-w) \partial_x V \right \|_{L^2} \notag \\
		&\quad + C(M) \tau \| (v-w) \partial_x u^\theta \|_{L^2} \notag\\
		&\leq C(\| V \|_{L^\infty}, M) \tau \| v - w \|_{H^1} + \tau \| V \|_{W^{1, 4}} \| v-w \|_{L^4}  \notag\\
		&\quad + C(M) \tau \| (v-w) \partial_x v \|_{L^2} + C(M) \theta \tau \| (v-w) \partial_x (v-w) \|_{L^2} \notag\\
		&\leq C(\| V \|_{W^{1, 4}}, M) \tau \| v - w \|_{H^1} + C(M) \tau \| v \|_{W^{1, 4}} \| v-w \|_{L^4} \notag \\
		&\quad + C(M) \tau \| \partial_x (v-w) \|_{L^2} \notag\\
		&\leq C(\| V \|_{W^{1, 4}}, M, M_1) \tau \| v - w \|_{H^1}. 
	\end{align}
	For $W_2$ in \cref{W_def}, direct calculation yields
	\begin{align}\label{W2}
		W_2
		&= \sigma f^\prime(|u^\theta|^2) (u^\theta \partial_x \overline{u^\theta} + \overline{u^\theta} \partial_x u^\theta ) (v-w) e^{-i\tau \phiu{u^\theta}} \notag\\
		&\quad + \sigma f(|u^\theta|^2) \partial_x (v-w) e^{-i\tau \phiu{u^\theta}} -i\tau \sigma f(|u^\theta|^2) (v-w) \partial_x \phiu{u^\theta} e^{-i\tau \phiu{u^\theta}}. 
	\end{align}
	From \cref{W2}, using H\"older's inequality and Sobolev embedding and noting \cref{fbound}, we have
	\begin{align}\label{W2_est}
		\| W_2 \|_{L^2} 
		&\lesssim C(M) \| (v-w) \partial_x u^\theta \|_{L^2} + C(\| V \|_{L^\infty}, M) \| \partial_x ( v- w) \|_{L^2} \notag\\
		&\quad  + \tau C(M) \| (v-w) \partial_x V \|_{L^2} + \tau \| (v-w) f(|u^\theta|^2) \partial_x f(|u^\theta|^2) \|_{L^2} \notag \\
		&\leq C(M) \| (v-w) \partial_x v \|_{L^2} + \theta C(M) \| (v-w) \partial_x (v-w) \|_{L^2} \notag \\
		&\quad + C(\| V \|_{L^\infty}, M) \| v-w \|_{H^1} +\tau C(M) \| \partial_x V \|_{L^4} \| v - w \|_{H^1}  \notag \\
		&\quad + \tau \| (v-w) \partial_x [f(|u^\theta|^2)]^2 \|_{L^2} \notag \\
		&\leq C(M) \| \partial_x v \|_{L^4} \| v-w \|_{L^4} + C(M) \| \partial_x (v-w) \|_{L^2} \notag \\
		&\quad + C(\| V \|_{W^{1, 4}}, M) \| v-w \|_{H^1} +  C(M) \| (v-w) \partial_x u^\theta \|_{L^2} \notag \\
		&\leq C(\| V \|_{W^{1, 4}}, M, M_1)\| v- w \|_{H^1}, 
	\end{align}
	where we use the identity  
	\begin{equation*}
		\partial_x [f(|u^\theta|^2)]^2 = \beta^2 \partial_x (|u^\theta|^2)^{2\sigma} = 2\sigma \beta^2 (|u^\theta|^2)^{2\sigma-1} (u^\theta \partial_x \overline{u^\theta} + \overline{u^\theta} \partial_x u^\theta), \quad \sigma \geq 1/2. 
	\end{equation*}
	Similarly, we have
	\begin{equation}\label{W3_est}
		\|  W_3 \|_{L^2} \leq C(\| V \|_{W^{1, 4}}, M, M_1)\| v- w \|_{H^1}. 
	\end{equation}
	From \cref{H1_stab_decomp}, using \cref{W1_est,W2_est,W3_est} and recalling \cref{prop:stability_L2}, we obtain the desired result. 
\end{proof}

\begin{proof}[Proof of \cref{eq:LT_H1} in \cref{thm:LT}]
	The proof is similar to the proof of \cref{eq:LT_L2}. From \cref{eq:error_eq_iter}, using \cref{prop:dominant_error_LT_H1,prop:stability_LT_H1}, the isometry property of $e^{it\Delta}$ and triangle inequality, we have
	\begin{equation}\label{eq:error_eq_LT_H1}
		\| e^{n+1} \|_{H^1} \lesssim \tau + h^2 + C_2 \tau \sum_{k=0}^n \| e^k \|_{H^1} + \left\| \mathcal{J}^n \right\|_{H^1},  \quad 0 \leq n \leq T/\tau-1, 
	\end{equation}
	where $\mathcal{J}^n$ is defined in \cref{eq:Jn} and $C_2$ depends on $M_3$ and $\max_{0 \leq k \leq n} \| \psi^k \|_{L^\infty}$. Recalling \cref{eq:Jn_phase,eq:summation_by_parts} and using Parseval's identity, similar to \cref{eq:est_J_LT_L2}, we have
	\begin{align}\label{Jn_est_H1}
		\| \mathcal{J}^n \|_{H^1}^2 
		&= (b-a)\sum_{l \in \mathcal{T}_N} (1+\mu_l^2) \delta_l^2 \left| S_{k, l}  \widehat{\phi}^n_l - \sum_{k=0}^{n-1} S_{k, l} \left(\widehat{\phi}^{k+1}_l - \widehat{\phi}^k_l\right) \right|^2 \notag\\
		&\lesssim \sum_{l \in \mathcal{T}_N} (1+\mu_l^2) \delta_l^2 S_{k, l}^2 \left|\widehat{\phi}^n_l \right|^2 + \sum_{l \in \mathcal{T}_N} (1+\mu_l^2) \delta_l^2 \left|\sum_{k=0}^{n-1} S_{k, l} \left(\widehat{\phi}^{k+1}_l - \widehat{\phi}^k_l\right) \right|^2 \notag\\
		&\lesssim \tau^2 \sum_{l \in \mathcal{T}_N} (1+\mu_l^2) \left|\widehat{\phi}^n_l \right|^2 + \sum_{l \in \mathcal{T}_N} (1+\mu_l^2) \delta_l^2 \sum_{k_1=0}^{n-1} \left|S_{k_1, l}\right|^2 \sum_{k_2=0}^{n-1} \left|\widehat{\phi}^{k_2+1}_l - \widehat{\phi}^{k_2}_l\right|^2 \notag\\
		&\lesssim \tau^2 \| \phi^n \|_{H^1}^2 + \sum_{k_1=0}^{n-1} \sum_{k_2=0}^{n-1} \sum_{l \in \mathcal{T}_N} (1+\mu_l^2) \delta_l^2 \left|S_{k_1, l}\right|^2 \left|\widehat{\phi}^{k_2+1}_l - \widehat{\phi}^{k_2}_l\right|^2 \notag\\
		&\lesssim \tau^2 \| \phi^n \|_{H^1}^2 + n\tau^2 \sum_{k=0}^{n-1} \| \phi^{k+1} - \phi^{k} \|_{H^1}^2, \quad 0 \leq n \leq T/\tau-1. 
	\end{align}
	Recalling \cref{eq:phin}, using the projection property of $P_N$ and \cref{lem:B_2}, we have
	\begin{align*}
		&\| \phi^n \|_{H^1} \leq \| B(P_N \psi(t_n)) \|_{H^1} \leq C(M_3)\| \psi(t_n) \|_{H^1} \leq C(M_3), \quad 0 \leq n \leq 	\frac{T}{\tau}-1, \\
		& 
		\begin{aligned}
			\| \phi^{k+1} - \phi^{k} \|_{H^1}
			&= \| P_N B(P_N \psi(t_{k+1})) - P_N B(P_N \psi(t_k)) \|_{H^1} \\
			&\leq \| B(P_N \psi(t_{k+1})) - B(P_N \psi(t_k)) \|_{H^1} \\
			&\lesssim \| \psi(t_{k+1}) - \psi(t_{k}) \|_{H^1} \lesssim \tau \| \partial_t \psi \|_{L^\infty([t_k, t_{k+1}]; H^1)}, \quad 0 \leq k \leq \frac{T}{\tau}-2,
		\end{aligned} 
	\end{align*}
	which, inserted into \cref{Jn_est_H1}, yield
	\begin{equation*}
		\| \mathcal{J}^n \|^2_{H^1} \lesssim \tau^2 + n^2 \tau^4 \lesssim \tau^2, \quad 0 \leq n \leq T/\tau -1. 
	\end{equation*}
	With the above estimate of $\| \mathcal{J}^n \|_{H^1}$, noting the uniform $ L^\infty $-bound of $ \psi^n \ (0 \leq n \leq T/\tau) $ established in the proof of \cref{eq:LT_L2} in \cref{sec:proof_S1_L2}, the proof can be completed by applying discrete Gronwall's inequality to \cref{eq:error_eq_LT_H1}. 
\end{proof}

\section{Proof of \cref{thm:ST} for the STFS \cref{full_discrete_ST}}
In this section, we shall show the optimal error bounds for the STFS method \cref{full_discrete_ST}. Again, we start with the optimal $L^2$-norm error bounds and show the proof of \cref{eq:ST_L2} in \cref{thm:ST}. In the rest of this section, we assume that $ V \in H^2_\text{per}(\Omega) $, $ \sigma \geq 1 $ and $\psi \in C([0, T]; H_\text{per}^4(\Omega)) \cap C^1([0, T]; H^2(\Omega)) \cap C^2([0, T]; L^2(\Omega))$, and define a constant 
\begin{equation*}
	M_4 = \max\left \{ \| V \|_{H^2}, \| \psi \|_{L^\infty([0, T]; H^4(\Omega))}, \| \partial_t \psi \|_{L^\infty([0, T]; H^2(\Omega))}, \| \partial_{tt} \psi \|_{L^\infty([0, T]; L^2(\Omega))} \right \}. 
\end{equation*}
\subsection{Some estimates for the operator B}
\begin{lemma}\label{lem:f_H2}
	When $ \sigma \geq 1 $, for any $ v \in H^2(\Omega) $, we have
	\begin{equation*}
		\| f(|v|^2) \|_{H^2} \leq C(\| v \|_{H^2}), \quad \| G(v) \|_{H^2} \leq C(\| v \|_{H^2}). 
	\end{equation*}
\end{lemma}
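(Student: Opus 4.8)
The plan is to estimate the $H^2$-norm of the composite functions $f(|v|^2) = \beta(|v|^2)^\sigma$ and $G(v) = \beta\sigma(|v|^2)^{\sigma-1}v^2$ directly by differentiating twice and bounding each resulting term in $L^2$, using the Sobolev embedding $H^2(\Omega)\hookrightarrow L^\infty(\Omega)$ (valid for $d=1,2,3$) together with $H^2(\Omega)\hookrightarrow W^{1,4}(\Omega)$ (so that products of two first derivatives lie in $L^2$). First I would record that, since $\sigma\geq 1$, the map $\rho\mapsto \rho^\sigma$ is $C^1$ on $[0,\infty)$ and $\rho\mapsto\rho^{\sigma-1}$ is continuous on $[0,\infty)$ (with the convention $\rho^0\equiv 1$ when $\sigma=1$), so the chain rule applies to $\rho=|v|^2$ wherever $v\neq 0$, and the formal derivative expressions extend continuously by zero on $\{v=0\}$ exactly as in the definition of $G$ in \cref{eq:G_def}; this makes the pointwise differentiation below rigorous.

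The computational core is: $\partial_x f(|v|^2) = \beta\sigma(|v|^2)^{\sigma-1}(v\,\partial_x\overline v + \overline v\,\partial_x v)$, and differentiating once more produces three types of terms — one with $(|v|^2)^{\sigma-2}$ times a quartic in $v$ and two first derivatives, one with $(|v|^2)^{\sigma-1}$ times $v$ (or $\overline v$) and a second derivative, and one with $(|v|^2)^{\sigma-1}$ times two first derivatives. Using $|v|^{2\sigma-2}|v|^2=|v|^{2\sigma}$ and $|v|^{2\sigma-4}|v|^4=|v|^{2\sigma}$ to absorb the negative powers, each coefficient is bounded pointwise by $C|v|^{2\sigma-1}$ or $C|v|^{2\sigma-2}\cdot|v|=C|v|^{2\sigma-1}$ or $C|v|^{2\sigma}$, hence by $C(\|v\|_{L^\infty})\leq C(\|v\|_{H^2})$ since $2\sigma-2\geq 0$. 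Then I bound: the term with $\partial_{xx}v$ by $C(\|v\|_{H^2})\|v\|_{H^2}$; the terms with $\partial_x v\,\partial_x v$ (or $\partial_x v\,\partial_x\overline v$) by $C(\|v\|_{H^2})\|\partial_x v\|_{L^4}^2\leq C(\|v\|_{H^2})\|v\|_{H^2}^2$ via $H^2\hookrightarrow W^{1,4}$; collecting the $L^2$-norms of $f(|v|^2)$, $\partial_x f(|v|^2)$ and $\partial_{xx}f(|v|^2)$ gives $\|f(|v|^2)\|_{H^2}\leq C(\|v\|_{H^2})$. The bound for $G(v)$ is entirely analogous — writing $G(v)=\beta\sigma|v|^{2\sigma-2}v^2$, its first derivative carries a factor $|v|^{2\sigma-3}$ against a cubic, and the second derivative a factor $|v|^{2\sigma-4}$ against a quintic together with lower-order terms; each negative power is again cancelled against an equal number of factors of $v$ or $\overline v$, leaving coefficients bounded by powers $|v|^{2\sigma-1}$, $|v|^{2\sigma-2}$ with nonnegative exponents, and the same Hölder/Sobolev estimates close the argument.

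The main obstacle, and the place requiring care rather than ingenuity, is the regularity of the composition near the zero set of $v$: when $1\le\sigma<2$ the exponents $2\sigma-3$ and $2\sigma-4$ appearing in the intermediate steps are negative, so one must verify that after cancellation with the algebraic factors of $v$ the resulting expressions are genuinely the distributional derivatives of $f(|v|^2)$ and $G(v)$ (and not merely formal). The cleanest way is to note $|v|^2\in W^{1,\infty}_{\mathrm{loc}}$-type regularity is not available, so instead I would argue by approximation: replace $|v|^2$ by $|v|^2+\delta$, apply the smooth chain rule, obtain the estimates with constants independent of $\delta$ (all the cancellations above work with $|v|^2+\delta\geq|v|^2$ bounding negative powers and $\leq\|v\|_{L^\infty}^2+\delta$ bounding positive ones), and pass to the limit $\delta\to 0^+$ using dominated convergence; this also automatically identifies the limit with the continuous extension by zero and confirms $f(|v|^2),G(v)\in H^2(\Omega)$ with the asserted bound. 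For $\sigma\geq 2$ this subtlety disappears entirely since all exponents are nonnegative and $\rho\mapsto\rho^\sigma$, $\rho\mapsto\rho^{\sigma-1}$ are $C^2$, so the direct computation suffices.
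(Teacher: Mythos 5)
Your proposal is correct and follows essentially the same route as the paper's proof: differentiate $f(|v|^2)$ and $G(v)$ twice, bound the coefficients pointwise by $|v|^{2\sigma-1}$ and $|v|^{2\sigma-2}$ (nonnegative exponents since $\sigma\geq 1$), and close with H\"older's inequality and the embeddings $H^2\hookrightarrow L^\infty$ and $H^2\hookrightarrow W^{1,4}$. Your extra regularization argument near the zero set of $v$ is a justified refinement of a point the paper handles only by the convention that expressions such as $f''(|z|^2)z^2$ are extended by zero at $z=0$.
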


\begin{proof}
	By direct calculation, we have
	\begin{equation}\label{eq:par_f}
		\begin{aligned}
			&\partial_x f(|v|^2) = f^\prime(|v|^2) (v \partial_x \overline{v} + \overline{v} \partial_x v), \\
			&\partial_{xx} f(|v|^2) = f^{\prime\prime}(|v|^2) (v \partial_x \overline{v} + \overline{v} \partial_x v)^2 + f^\prime(|v|^2) (v \partial_{xx} \overline{v} + \overline{v} \partial_{xx} v + 2 |\partial_x v|^2 ). 
		\end{aligned}
	\end{equation}
	From \cref{eq:par_f}, using Sobolev embedding and noting \cref{fbound} and
	\begin{equation}
		|f^\prime(|z|^2)| + | f^{\prime\prime}(|z|^2)z^2 | + | f^{\prime\prime}(|z|^2)|z|^2 | + | f^{\prime\prime}(|z|^2)\overline{z}^2 | \lesssim |z|^{2\sigma - 2}, \quad  \sigma \geq 1, \quad z \in \C, 
	\end{equation}
	we have
	\begin{align*}
		&\| f(|v|^2) \|_{L^2} \lesssim \| v \|_{L^\infty}^{2\sigma} \leq C(\| v \|_{H^2}), \quad \| \partial_x f(|v|^2) \|_{L^2} \lesssim \| v \|_{L^\infty}^{2\sigma - 1} \| \partial_x v \|_{L^2} \leq C(\| v \|_{H^2}), \\
		&\| \partial_{xx} f(|v|^2) \|_{L^2} \lesssim \| v \|_{L^\infty}^{2\sigma - 2} \| \partial_x v \|_{L^4}^2 + \| v \|_{L^\infty}^{2\sigma -1} \| \partial_{xx} v \|_{L^2} \leq C(\| v \|_{H^2}), 
	\end{align*}
	which implies $ \| f(|v|^2) \|_{H^2} \leq C(\| v \|_{H^2}) $. Similarly, we can show that $  \| G(v) \|_{H^2} \leq C(\| v \|_{H^2}) $ and the proof is completed. 
\end{proof}


\begin{lemma}\label{lem:B3}
	Under the assumptions $ V \in H^2(\Omega) $ and $ \sigma \geq 1 $, for any $ v, w \in H^2(\Omega) $ such that $ \| v \|_{H^2} \leq M $, $ \| w \|_{H^2} \leq M $, we have
	\begin{equation}\label{lem:diffB_H2}
		\| B(v) - B(w) \|_{H^2} \leq C(\| V \|_{H^2}, M) \| v - w \|_{H^2}. 
	\end{equation}
	In particular, when $w = 0$, we have
	\begin{equation}\label{lem:B_H2}
		\| B(v) \|_{H^2} \leq C(\| V \|_{H^2}, M). 
	\end{equation}
\end{lemma}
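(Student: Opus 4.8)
The plan is to recall that $B(u) = -i\phiu{u}u = -i(V + f(|u|^2))u$ and to split the difference into a part coming from the potential and a part coming from the nonlinearity: write
\begin{equation*}
	B(v) - B(w) = -i V (v - w) - i \left( f(|v|^2) v - f(|w|^2) w \right).
\end{equation*}
For the potential term, since $V \in H^2(\Omega)$ and $H^2(\Omega)$ is a Banach algebra in dimension $d = 1,2,3$, I would bound $\| V(v-w) \|_{H^2} \lesssim \| V \|_{H^2} \| v - w \|_{H^2}$ directly, which already gives the correct form with a constant depending only on $\| V \|_{H^2}$. For the nonlinear term, the idea is to interpolate along the segment $u^\theta = (1-\theta)w + \theta v$: by the fundamental theorem of calculus,
\begin{equation*}
	f(|v|^2)v - f(|w|^2)w = \int_0^1 \partial_\theta \left( f(|u^\theta|^2) u^\theta \right) \rmd \theta = \int_0^1 \left( (1+\sigma) f(|u^\theta|^2)(v-w) + G(u^\theta)\overline{(v-w)} \right) \rmd \theta,
\end{equation*}
using the same computation that produced $dB$ in \cref{eq:dB_def} together with $f'(|z|^2)|z|^2 = \sigma f(|z|^2)$.

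Then the problem reduces to estimating, for each $\theta \in [0,1]$, the $H^2$-norms of $f(|u^\theta|^2)(v-w)$ and $G(u^\theta)\overline{(v-w)}$. Again invoking the Banach algebra property of $H^2$, these are bounded by $\| f(|u^\theta|^2) \|_{H^2} \| v-w \|_{H^2}$ and $\| G(u^\theta) \|_{H^2} \| v-w \|_{H^2}$ respectively. Now \cref{lem:f_H2} applies directly: since $\sigma \geq 1$ and $\| u^\theta \|_{H^2} \leq (1-\theta)\| w \|_{H^2} + \theta \| v \|_{H^2} \leq M$, we get $\| f(|u^\theta|^2) \|_{H^2} \leq C(M)$ and $\| G(u^\theta) \|_{H^2} \leq C(M)$, uniformly in $\theta$. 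Integrating over $\theta \in [0,1]$ and combining with the potential estimate yields \cref{lem:diffB_H2} with constant $C(\| V \|_{H^2}, M)$. The special case $w = 0$ follows either by the same argument (noting $G(0) = 0$, $f(0)\cdot 0 = 0$) or by combining \cref{lem:diffB_H2} with the trivial bound $\| B(0) \|_{H^2} = 0$ and $\| v \|_{H^2} \leq M$; writing $\| B(v) \|_{H^2} \leq C(\| V \|_{H^2}, M)\| v \|_{H^2} \leq C(\| V \|_{H^2}, M)$ gives \cref{lem:B_H2}.

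The main obstacle, though a mild one, is making sure the Banach algebra estimates are applied to the right objects: one must be slightly careful that $f(|u^\theta|^2)$ and $G(u^\theta)$ genuinely lie in $H^2$ with the claimed bounds even though $f(\rho) = \beta\rho^\sigma$ is not smooth at $\rho = 0$ when $\sigma$ is non-integer. This is precisely what \cref{lem:f_H2} handles via the explicit derivative formulas \cref{eq:par_f} and the pointwise bounds on $f'$, $f''$, so the work is really just bookkeeping once that lemma is in hand. An alternative to the interpolation trick would be a direct Lipschitz-type estimate $|f(|z_1|^2)z_1 - f(|z_2|^2)z_2| \lesssim (|z_1|^{2\sigma} + |z_2|^{2\sigma})|z_1 - z_2|$ together with chain-rule bounds on the first and second derivatives of $z \mapsto f(|z|^2)z$, but the segment-integration approach is cleaner because it reuses the already-computed G\^ateaux derivative and defers all the non-smoothness to \cref{lem:f_H2}.
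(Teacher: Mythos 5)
Your proposal is correct and follows essentially the same route as the paper's proof: the same splitting into potential and nonlinear parts, the same use of the Banach algebra property of $H^2(\Omega)$ for $d\le 3$, and the same segment integration $\gamma(\theta)=f(|u^\theta|^2)u^\theta$ with $\gamma'(\theta)=(1+\sigma)f(|u^\theta|^2)(v-w)+G(u^\theta)\overline{(v-w)}$ estimated via \cref{lem:f_H2}. No gaps.
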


\begin{proof}
	Noting that $ H^2(\Omega) $ is an algebra when $ 1 \leq d \leq 3 $, we get
	\begin{equation}\label{diff_V_H2}
		\| V v - V w \|_{H^2} \leq \| V \|_{H^2} \| v - w \|_{H^2}. 
	\end{equation}
	Let $ u^\theta = (1-\theta)w + \theta v $ for $ 0 \leq \theta \leq 1 $ and define
	\begin{equation*}
		\gamma(\theta) = f(|u^\theta|^2)u^\theta, \quad 0 \leq \theta \leq 1. 
	\end{equation*}
	Then one has
	\begin{equation}\label{diff_fvv}
		f(|v|^2)v - f(|w|^2)w = \gamma(1) - \gamma(0) = \int_0^1 \gamma'(\theta) \rmd \theta , 
	\end{equation}
	where
	\begin{equation}\label{gamma_prime}
		\gamma'(\theta) = (1+\sigma) f(|u^\theta|^2) (v-w) + G(u^\theta) \overline{(v-w)}, \quad 0 \leq \theta \leq 1. 
	\end{equation}
	Inserting \cref{gamma_prime} into \cref{diff_fvv}, by \cref{lem:f_H2} and the algebra property of $H^2(\Omega)$, we have
	\begin{equation*}
		\| f(|v|^2)v - f(|w|^2)w \|_{H^2} \leq \int_0^1 \| \gamma'(\theta) \|_{H^2} \rmd \theta \leq C(M) \| v - w \|_{H^2}, 
	\end{equation*}
	which, combined with \cref{diff_V_H2}, completes the proof. 
\end{proof}

\begin{lemma}\label{lem:diff_dB}
	Under the assumptions $ V \in L^\infty(\Omega) $ and $ \sigma \geq 1/2 $, for any $ v_j, w_j \in L^\infty(\Omega) $ satisfying $ \| v_j \|_{L^\infty} \leq M $ and $ \| w_j \|_{L^\infty} \leq M $ with $ j = 1, 2 $, we have
	\begin{equation*}
		\| dB(v_1)[w_1] - dB(v_2)[w_2] \|_{L^2} \leq C(\| V \|_{L^\infty}, M) \left ( \| v_1 - v_2 \|_{L^2} + \| w_1 - w_2 \|_{L^2} \right ). 
	\end{equation*}
\end{lemma}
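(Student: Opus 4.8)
The plan is to exploit the $\R$-linearity of $w \mapsto dB(v)[w]$, visible from the explicit formula \cref{eq:dB_def}, and split
\begin{equation*}
	dB(v_1)[w_1] - dB(v_2)[w_2] = dB(v_1)[w_1 - w_2] + \bigl(dB(v_1)[w_2] - dB(v_2)[w_2]\bigr).
\end{equation*}
The first term is handled at once by \cref{lem:dB1} (which applies since $\sigma \geq 1/2 > 0$), giving $\|dB(v_1)[w_1-w_2]\|_{L^2} \leq C(\|V\|_{L^\infty}, M)\|w_1 - w_2\|_{L^2}$. In the second term the potential part $Vw_2$ cancels, and \cref{eq:dB_def} yields
\begin{equation*}
	dB(v_1)[w_2] - dB(v_2)[w_2] = -i(1+\sigma)\bigl(f(|v_1|^2) - f(|v_2|^2)\bigr) w_2 - i\bigl(G(v_1) - G(v_2)\bigr)\overline{w_2},
\end{equation*}
so that, using $\|w_2\|_{L^\infty} \leq M$, it remains only to establish the two $L^2$ Lipschitz bounds $\|f(|v_1|^2) - f(|v_2|^2)\|_{L^2} \leq C(M)\|v_1 - v_2\|_{L^2}$ and $\|G(v_1) - G(v_2)\|_{L^2} \leq C(M)\|v_1 - v_2\|_{L^2}$.

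Both of these follow, by composition with $v_1, v_2$ and integration over $\Omega$, from the corresponding pointwise bounds $|f(|z_1|^2) - f(|z_2|^2)| + |G(z_1) - G(z_2)| \leq C(M)|z_1 - z_2|$ for $z_1, z_2 \in \C$ with $|z_1|, |z_2| \leq M$. For $f(|z|^2) = \beta|z|^{2\sigma}$ this is immediate from $\bigl||z_1| - |z_2|\bigr| \leq |z_1 - z_2|$ together with the Lipschitz continuity of $t \mapsto t^{2\sigma}$ on $[0,M]$, whose derivative $2\sigma t^{2\sigma - 1}$ is bounded precisely because $2\sigma \geq 1$. For $G(z) = \beta\sigma|z|^{2\sigma-2}z^2$, which is continuous on all of $\C$ (indeed $|G(z)| \leq |\beta|\sigma|z|^{2\sigma} \to 0$ as $z \to 0$) and smooth away from the origin, I would compute the Wirtinger derivatives away from $0$, finding that $\partial_z G$ and $\partial_{\bar z}G$ have moduli $\lesssim |f^\prime(|z|^2)z| + |f^\prime(|z|^2)\overline z| \lesssim |z|^{2\sigma - 1}$, the bound recorded in \cref{fbound}, whence $\|DG(z)\| \leq C(M)$ on $\{0 < |z| \leq M\}$ since $\sigma \geq 1/2$. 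A routine argument — joining $z_1$ to $z_2$ by line segments that avoid the origin and passing to the limit using the continuity of $G$ — then upgrades this to the global Lipschitz bound on the closed ball $\{|z| \leq M\}$.

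The only delicate point — and the sole place where $\sigma \geq 1/2$, rather than just $\sigma > 0$ as in \cref{lem:dB1}, is genuinely needed — is the Lipschitz estimate for $G$ near the origin: for $1/2 \leq \sigma < 1$ the factor $|z|^{2\sigma-2}$ is singular at $z = 0$, and the point is that it is always dominated by the vanishing of $z^2$, so that the differential has size of order $|z|^{2\sigma-1}$, which stays bounded exactly at the threshold $\sigma = 1/2$. Assembling the bounds for the two pieces of the splitting then completes the proof. All of the ingredients here mirror those already used for \cref{lem:B,lem:dB1} and the $H^1$-type estimates in \cite{bao2023_semi_smooth}, so one may alternatively simply invoke them from there.
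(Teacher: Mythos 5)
Your proof is correct and follows essentially the same route as the paper's: both reduce the estimate to the pointwise Lipschitz bounds for $f(|\cdot|^2)$ and $G$ on the ball $\{|z|\leq M\}$, which is exactly where $\sigma \geq 1/2$ enters (the paper quotes the $f$-bound as \cref{eq:diff_f} and treats $G$ "similarly", while you derive both directly). The only cosmetic difference is that you split off $dB(v_1)[w_1-w_2]$ using $\R$-linearity and invoke \cref{lem:dB1}, whereas the paper adds and subtracts inside each product term, e.g. $f(|v_1|^2)w_1 - f(|v_2|^2)w_2 = (f(|v_1|^2)-f(|v_2|^2))w_1 + f(|v_2|^2)(w_1-w_2)$; the resulting estimates are identical.
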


\begin{proof}
	Recalling \cref{eq:dB_def}, we have
	\begin{align}\label{diff_dB}
		\| dB(v_1)[w_1] - dB(v_2)[w_2] \|_{L^2} 
		&\lesssim \| V \|_{L^\infty} \| w_1 -  w_2 \|_{L^2} + \| f(|v_1|^2)w_1 - f(|v_2|^2)w_2 \|_{L^2} \notag\\
		&\quad + \| G(v_1)\overline{w_1} - G(v_2)\overline{w_2} \|_{L^2}. 
	\end{align}
	When $\sigma \geq 1/2 $, we have (see (4.9) in \cite{bao2023_semi_smooth})
	\begin{equation}\label{eq:diff_f}
		| f(|z_1|^2) - f(|z_2|^2) | \lesssim \max\{|z_1|, |z_2|\}^{2\sigma -1} |z_1 - z_2|, \quad z_1, z_2 \in \C, 
	\end{equation}
	which implies
	\begin{align}\label{diff_dB_est_1}
		&\| f(|v_1|^2)w_1 - f(|v_2|^2)w_2 \|_{L^2} \notag \\
		&\leq \| ( f(|v_1|^2) - f(|v_2|^2) )w_1 \|_{L^2} + \| f(|v_2|^2) (w_1 - w_2) \|_{L^2} \notag\\
		&\leq \| f(|v_1|^2) - f(|v_2|^2) \|_{L^2} \| w_1 \|_{L^\infty} + \| f(|v_2|^2) \|_{L^\infty} \| w_1 - w_2 \|_{L^2} \notag\\
		&\leq C(M) \left(\| v_1 - v_2 \|_{L^2} + \| w_1 - w_2 \|_{L^2} \right). 
	\end{align}
	Similarly, we have 
	\begin{equation}\label{diff_dB_est_2}
		\| G(v_1)\overline{w_1} - G(v_2)\overline{w_2} \|_{L^2} \leq C(M) \left( \| v_1 - v_2 \|_{L^2} + \| w_1 - w_2 \|_{L^2} \right). 
	\end{equation}
	Inserting \cref{diff_dB_est_1,diff_dB_est_2} into \cref{diff_dB} yields the desired estimate. 
\end{proof}


\begin{lemma}\label{lem:dB_H2}
	Under the assumptions $ V \in H^2(\Omega) $ and $ \sigma \geq 1 $, for any $ v, w \in H^2(\Omega) $ satisfying $ \| v \|_{H^2} \leq M $, $ \| w \|_{H^2} \leq M $, we have
	\begin{equation*}
		\| dB(v)[w] \|_{H^2} \leq C(\| V \|_{H^2}, M). 
	\end{equation*}
\end{lemma}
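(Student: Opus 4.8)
The plan is to bound the $H^2$-norm of $dB(v)[w] = -i[Vw + (1+\sigma)f(|v|^2)w + G(v)\overline{w}]$ term by term, exploiting that $H^2(\Omega)$ is a Banach algebra in dimension $d = 1, 2, 3$. First I would handle $Vw$: since $V \in H^2(\Omega)$ and $w \in H^2(\Omega)$, the algebra property gives $\|Vw\|_{H^2} \leq \|V\|_{H^2}\|w\|_{H^2} \leq C(\|V\|_{H^2}, M)$ directly. The remaining two terms involve the nonlinearity, and the key input is \cref{lem:f_H2}, which for $\sigma \geq 1$ gives $\|f(|v|^2)\|_{H^2} \leq C(\|v\|_{H^2})$ and $\|G(v)\|_{H^2} \leq C(\|v\|_{H^2})$.

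Then I would estimate $\|(1+\sigma)f(|v|^2)w\|_{H^2}$ and $\|G(v)\overline{w}\|_{H^2}$ by again invoking the algebra property: $\|f(|v|^2)w\|_{H^2} \leq \|f(|v|^2)\|_{H^2}\|w\|_{H^2} \leq C(\|v\|_{H^2})M \leq C(M)$, and likewise $\|G(v)\overline{w}\|_{H^2} \leq \|G(v)\|_{H^2}\|\overline{w}\|_{H^2} \leq C(M)$ (noting $\|\overline{w}\|_{H^2} = \|w\|_{H^2}$). Collecting these bounds and the bound on $\|Vw\|_{H^2}$ via the triangle inequality yields $\|dB(v)[w]\|_{H^2} \leq C(\|V\|_{H^2}, M)$, which is exactly the claim.

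This proof is essentially routine once \cref{lem:f_H2} is in hand; there is no real obstacle. The only point requiring a bit of care is making sure the constant from \cref{lem:f_H2} is controlled by $\|v\|_{H^2} \leq M$ (so it becomes a constant depending on $M$), and that the factor $(1+\sigma)$ is absorbed — but since $\sigma$ is a fixed parameter of the problem this is harmless and can be folded into the generic constant. I would write the proof in three or four lines: expand $dB(v)[w]$ via \cref{eq:dB_def}, apply the triangle inequality, bound $Vw$ by the algebra property, bound the two nonlinear terms by combining the algebra property with \cref{lem:f_H2}, and conclude.
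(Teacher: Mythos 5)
Your proof is correct and follows exactly the paper's own argument: the paper's proof is a one-liner that recalls \cref{eq:dB_def} and combines \cref{lem:f_H2} with the algebra property of $H^2(\Omega)$, which is precisely the term-by-term estimate you spell out. No issues.
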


\begin{proof}
	Recalling \cref{eq:dB_def} and using \cref{lem:f_H2} and the algebra property of $H^2(\Omega)$, we obtain the desired result. 
\end{proof}

\begin{lemma}\label{lem:dG_L2}
	Under the assumption that $\sigma \geq 1/2$, for any $ v \in L^\infty(\Omega) $ and $w \in L^2(\Omega)$ satisfying $ \| v \|_{L^\infty} \leq M $, we have
	\begin{equation*}
		\| dG(v)[w] \|_{L^2} \leq C(M)\| w \|_{L^2}. 
	\end{equation*}
\end{lemma}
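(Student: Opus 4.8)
The plan is to compute the Gâteaux derivative of $G$ explicitly from its definition \cref{eq:G_def} and then bound each resulting term by a pointwise estimate reducing everything to $\|w\|_{L^2}$ with a constant depending only on $M$. First I would write, for $z\neq 0$, $G(z)=\beta\sigma|z|^{2\sigma-2}z^2=\beta\sigma(z\bar z)^{\sigma-1}z^2$, and differentiate in the real-linear sense: treating $G$ as a function of $z$ and $\bar z$, one gets
\begin{equation*}
	dG(v)[w] = \beta\sigma\left[(\sigma-1)|v|^{2\sigma-4}v^2\left(v\overline{w}+\overline{v}w\right)+ 2|v|^{2\sigma-2}v\,w\right]
\end{equation*}
on the set $\{v\neq0\}$, with the convention that $dG(v)[w]=0$ where $v=0$ (which is consistent since $G$ vanishes to order $2\sigma+1>1$ at the origin, so the difference quotient converges to $0$ there, exactly as in the treatment of \cref{eq:dB_def}). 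Each coefficient multiplying $w$ or $\overline{w}$ is bounded pointwise by $C|v|^{2\sigma-1}$, so that $|dG(v)[w](x)|\lesssim |v(x)|^{2\sigma-1}|w(x)|$ almost everywhere.

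The second step is then purely a pointwise-to-$L^2$ passage: since $\|v\|_{L^\infty}\le M$ and $2\sigma-1>0$ (using $\sigma\geq1/2$; when $\sigma=1/2$ the exponent is $0$ and the bound is trivially $C|w|$, while for $1/2<\sigma<1$ the negative powers $|v|^{2\sigma-4}$, $|v|^{2\sigma-2}$ appearing above are always multiplied by enough copies of $v$ to leave a nonnegative net power $2\sigma-1$), we obtain
\begin{equation*}
	\|dG(v)[w]\|_{L^2}\le C(M)\|\,|v|^{2\sigma-1}|w|\,\|_{L^2}\le C(M)\,M^{2\sigma-1}\|w\|_{L^2}=C(M)\|w\|_{L^2},
\end{equation*}
which is the claim. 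This mirrors the estimate \cref{eq:f_G_bound} already used for $G$ itself and the argument behind \cref{lem:dB1}.

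The only delicate point — and hence the step I would be most careful about — is justifying the formula for $dG(v)[w]$ and the convention at $v=0$ when $1/2\le\sigma<1$, where $G$ is merely continuous (not $C^1$) and its classical derivative blows up near the origin. The resolution is that one never needs the pointwise derivative where $v=0$: the difference quotient $\big(G(v+\vep w)-G(v)\big)/\vep$ converges pointwise a.e., is dominated by $C(M)|w|\in L^2$ via \cref{eq:diff_f}-type bounds (more precisely $|G(z_1)-G(z_2)|\lesssim\max\{|z_1|,|z_2|\}^{2\sigma-1}|z_1-z_2|$, proved exactly as \cref{eq:diff_f}), so dominated convergence gives both the existence of the limit in $L^2$ and the stated bound directly, without ever invoking the explicit formula on the null-looking set. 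In fact this Lipschitz-type estimate alone yields the lemma: $\|dG(v)[w]\|_{L^2}=\lim_{\vep\to0}\vep^{-1}\|G(v+\vep w)-G(v)\|_{L^2}\le C(M)\lim_{\vep\to0}\vep^{-1}\|\vep w\|_{L^2}=C(M)\|w\|_{L^2}$, so I would present the proof in that streamlined form and relegate the explicit formula to a parenthetical remark.
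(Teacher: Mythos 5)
Your main argument (paragraphs one and two) is exactly the paper's proof: the paper writes $dG(v)[w]=f''(|v|^2)v^3\overline{w}+f''(|v|^2)v^2\overline{v}w+2f'(|v|^2)vw$ in \cref{eq:dG_def}, which is your formula after substituting $f'(\rho)=\beta\sigma\rho^{\sigma-1}$ and $f''(\rho)=\beta\sigma(\sigma-1)\rho^{\sigma-2}$, and then bounds every coefficient pointwise by $C|z|^{2\sigma-1}$ (the estimate \cref{f_bound2}) to get $\|dG(v)[w]\|_{L^2}\lesssim\|v\|_{L^\infty}^{2\sigma-1}\|w\|_{L^2}$. That part is correct and complete.

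The one thing I would warn you against is the ``streamlined'' form you say you would actually present. The Lipschitz-type estimate is $|G(z_1)-G(z_2)|\lesssim\max\{|z_1|,|z_2|\}^{2\sigma-1}|z_1-z_2|$, and with $z_1=v+\vep w$ the prefactor is only controlled by $(M+\vep|w|)^{2\sigma-1}$, not by $M^{2\sigma-1}$, because $w$ is assumed merely in $L^2(\Omega)$ and need not be bounded. For $\sigma>1/2$ this produces an extra term of size $\vep^{2\sigma-1}|w|^{2\sigma}$, which need not lie in $L^2$ (that would require $w\in L^{4\sigma}$), so the chain
\begin{equation*}
\vep^{-1}\|G(v+\vep w)-G(v)\|_{L^2}\le C(M)\,\vep^{-1}\|\vep w\|_{L^2}
\end{equation*}
does not follow from the Lipschitz bound alone, and the dominated-convergence argument lacks an $\vep$-uniform $L^2$ majorant on the set where $|w|$ is large. (It does go through for $\sigma=1/2$, where the exponent is $0$, or if one additionally assumes $w\in L^\infty$.) So keep the explicit-formula computation as the actual proof --- as the paper does --- and, if you wish, mention the difference-quotient interpretation only as motivation for the convention $dG(v)[w]=0$ on $\{v=0\}$.
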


\begin{proof}
	Recalling \cref{eq:Gateaux,eq:G_def}, we have
	\begin{equation}\label{eq:dG_def}
		dG(v)[w] = f''(|v|^2) v^3 \overline{w} + f''(|v|^2) v^2 \overline{v} w + 2 f'(|v|^2)v w, 
	\end{equation}
	where $ f''(|z|^2) z^3 $ and $ f''(|z|^2) z^2 \overline{z} $ with $z \in \C$ are defined as $ 0 $ when $ z=0 $. From \cref{eq:dG_def}, using the fact that 
	\begin{equation}\label{f_bound2}
		\left | f''(|z|^2) z^3 \right | + \left | f''(|z|^2) z^2 \overline{z} \right | + \left | f'(|z|^2)z \right | 
		\lesssim |z|^{2\sigma-1}, \quad \sigma \geq 1/2, \quad z \in \C, 
	\end{equation}
	and Sobolev embedding, we obtain
	\begin{equation*}
		\| dG(v)[w] \|_{L^2} \lesssim \| v \|_{L^\infty}^{2\sigma -1} \| w \|_{L^2} \leq C(M) \| w \|_{L^2}, 
	\end{equation*}
	which completes the proof. 
\end{proof}

\subsection{Local truncation error and stability estimates}
Similar to the estimate of the first-order Lie-Trotter time-splitting method, we first establish the local truncation error and stability estimates for the second-order Strang splitting. 

Define the local truncation error of the Strang time-splitting method as
\begin{equation}\label{eq:barE_def}
	\mcalL^n = P_N \psi(t_{n+1}) - \mathcal{S}_2^\tau (P_N \psi(t_n)), \quad 0 \leq n \leq T/\tau - 1. 
\end{equation}
\begin{proposition}\label{prop:dominant_error_ST_L2}
	Assuming that $ V \in H^2_\text{per}(\Omega) $, $ \sigma \geq 1 $ and $\psi \in C([0, T]; H_\text{per}^4(\Omega)) \cap C^1([0, T]; H^2(\Omega)) \cap C^2([0, T]; L^2(\Omega))$, we have
	\begin{equation*}
		\mcalL^n = \mcalL_1^n + \mcalL_2^n, 
	\end{equation*}
	where 
	\begin{equation*}
		\| \mcalL_1^n \|_{L^2} \lesssim \tau^3 + \tau h^4, \quad \mcalL_2^n = \tau^3 \Delta^2 \int_0^1 \ker(\theta) e^{i (1-\theta)\tau\Delta} P_N B(e^{i\theta \tau \Delta} P_N \psi(t_n)) \rmd \theta, 
	\end{equation*}
	with $\ker(\theta)$ the peano kernel for the mid-point rule. 
\end{proposition}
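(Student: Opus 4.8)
The plan is to follow the same route as the proof of \cref{prop:dominant_error_LT_L2} (cf.\ Proposition~3.6 of \cite{bao2023_semi_smooth}), now carried out to one higher order in $\tau$. Concretely, I would expand both $P_N\psi(t_{n+1})$ and the numerical flow $\mathcal{S}_2^\tau(P_N\psi(t_n))$ through $O(\tau^3)$, check that all contributions of order $\tau$ and $\tau^2$ agree and hence cancel (this is where the second order of the Strang splitting is used), and then, among the remaining $O(\tau^3)$ terms, single out the unique one carrying the operator $\Delta^2$; this term, being the only one that cannot be controlled under the present regularity, is set aside as $\mcalL_2^n$, and everything else is collected into $\mcalL_1^n$.

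First I would iterate Duhamel's formula as in \cref{eq:duhamel} to write $P_N\psi(t_{n+1}) = e^{i\tau\Delta}P_N\psi(t_n) + \int_0^\tau e^{i(\tau-s)\Delta}P_N B\big(e^{is\Delta}P_N\psi(t_n)\big)\,\rmd s + R_1$, where $R_1$ consists of (i) the exact double-integral remainder, which Taylor-expands to $\tfrac{\tau^2}{2}\,dB(P_N\psi(t_n))[B(P_N\psi(t_n))]$ modulo a genuine $O(\tau^3)$ term controlled in $L^2$ by \cref{lem:B3,lem:diff_dB,lem:dB_H2,lem:dG_L2}, $\sigma\geq1$, and the assumed time-regularity of $\psi$; and (ii) the $O(\tau h^4)$ error incurred by replacing $\psi(t_n)$ with $P_N\psi(t_n)$ inside $B$, estimated via the Lipschitz bound in \cref{lem:B3} together with $\|(I-P_N)\psi(t_n)\|_{L^2}\lesssim h^4\|\psi(t_n)\|_{H^4}$. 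Next, applying the Taylor expansion \cref{eq:taylor} of $\Phi_B^\tau$ to $\mathcal{S}_2^\tau(P_N\psi(t_n)) = e^{i\tau\Delta/2}P_N\Phi_B^\tau\big(e^{i\tau\Delta/2}P_N\psi(t_n)\big)$ and using that $e^{i\tau\Delta/2}$ commutes with $P_N$ and that $P_N$ acts as the identity on $X_N$, I obtain $\mathcal{S}_2^\tau(P_N\psi(t_n)) = e^{i\tau\Delta}P_N\psi(t_n) + \tau\,e^{i\tau\Delta/2}P_N B\big(e^{i\tau\Delta/2}P_N\psi(t_n)\big) + \tfrac{\tau^2}{2}\,dB(P_N\psi(t_n))[B(P_N\psi(t_n))] + R_2$, with $\|R_2\|_{L^2}\lesssim\tau^3$ after expanding the half-step propagators via $\|(e^{i\tau\Delta/2}-I)\phi\|_{L^2}\lesssim\tau\|\phi\|_{H^2}$ and invoking \cref{lem:B3,lem:dB_H2}. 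Subtracting, the $O(\tau^2)$ terms cancel, so it remains to treat $\mcalL^n = \tau\Big(\int_0^1 g(\theta)\,\rmd\theta - g(1/2)\Big) + \big(\text{terms of size } \tau^3 + \tau h^4\big)$, where $g(\theta) := e^{i(1-\theta)\tau\Delta}P_N B\big(e^{i\theta\tau\Delta}P_N\psi(t_n)\big)$, using $s=\theta\tau$ in the first-order Duhamel integral and noting that $\tau g(1/2)$ is exactly the $O(\tau)$ term of the numerical flow.

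The parenthesis $\int_0^1 g - g(1/2)$ is precisely the error of the midpoint quadrature rule on $[0,1]$, so by the Peano kernel representation it equals $\int_0^1\ker(\theta)\,g''(\theta)\,\rmd\theta$. Differentiating $g$ twice in $\theta$ produces exactly one term in which both derivatives fall on the propagators, namely $-\tau^2\Delta^2 e^{i(1-\theta)\tau\Delta}P_N B(e^{i\theta\tau\Delta}P_N\psi(t_n))$; pulling the (commuting) operator $\Delta^2$ outside the integral, this is the piece I would designate as $\mcalL_2^n$. Every other term in $g''(\theta)$ carries at most one $\Delta$ on the outer $B$-factor and involves $dB$ or $d^2B$ evaluated on $\Delta\psi(t_n)\in H^2$ or on $\Delta^2\psi(t_n)\in L^2$; using $V\in H^2_\text{per}$, $\sigma\geq1$, $\psi\in C([0,T];H^4_\text{per})$, the commutation of $\Delta$ with $P_N$ on $H^2_\text{per}$, and \cref{lem:B3,lem:diff_dB,lem:dB_H2,lem:dG_L2}, each such term is bounded in $L^2$ by $C(M_4)$ and hence contributes $O(\tau^3)$ after the factor $\tau\cdot\tau^2$ and integration against $\ker$. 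Collecting all $O(\tau^3+\tau h^4)$ pieces into $\mcalL_1^n$ and the $\Delta^2$ piece into $\mcalL_2^n$ completes the proof.

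The principal difficulty, relative to the Lie-Trotter case, is the bookkeeping forced by the required order: one must genuinely verify that the $O(\tau^2)$ parts of the two expansions coincide (so that $\mcalL^n=O(\tau^3)$), which means expanding the double Duhamel integral and the outer half-step propagators one order further and checking, term by term, that every resulting contribution except the single $\Delta^2$-term can be estimated in $L^2$ using only $V\in H^2_\text{per}$, $\sigma\geq1$, and the stated $H^4$/$H^2$/$L^2$ time-regularity of $\psi$ — in particular, that four spatial derivatives appear unavoidably only inside $\mcalL_2^n$, while every place where $\Delta^2$ could a priori have arisen is either smoothed by a time integration or absorbed into a slot of $dB$ or $d^2B$ acting on a sufficiently regular factor.
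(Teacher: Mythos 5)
Your proposal is correct and follows essentially the same route as the paper: Duhamel's formula plus the Taylor expansion \cref{eq:taylor} of $\Phi_B^\tau$, cancellation of the $O(\tau^2)$ contributions using the $H^2$-level estimates (\cref{lem:B3,lem:diff_dB,lem:dB_H2}), a Peano-kernel representation of the midpoint quadrature error, and isolation of the single term where $\Delta^2$ falls on the whole factor $B(e^{i\theta\tau\Delta}P_N\psi(t_n))$ as $\mcalL_2^n$. The only (immaterial) difference is that you pivot the $O(\tau^2)$ cancellation around $dB(P_N\psi(t_n))[B(P_N\psi(t_n))]$, whereas the paper uses $D(\tau/2,\tau/2)=\gamma(0)$ with the half-step propagators retained; both reductions rely on the same lemmas.
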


\begin{proof}
	From \cref{eq:duhamel}, using \cref{,lem:diff_B1,lem:diff_dB}, the isometry property of $e^{it\Delta}$ and the standard projection error estimates of $P_N$, we have
	\begin{align}\label{eq:duhamel_modify}
		&P_N\psi(t_{n+1}) = e^{i \tau \Delta} P_N \psi(t_n) + \int_0^\tau e^{i (\tau - s )\Delta} P_N B(e^{is \Delta} P_N \psi(t_n)) \rmd s \\
		& + \int_0^\tau \int_0^s e^{i (\tau - s) \Delta} P_N\left( dB(e^{i(s-\vsigma)\Delta} P_N \psi(t_n+\vsigma)) [e^{i(s - \vsigma)\Delta} B(P_N \psi(t_n+\vsigma))] \right) \rmd \vsigma \rmd s + e_h, \notag
	\end{align}
	where $ \| e_h \|_{L^2} \lesssim \tau h^4 $. By the Taylor expansion for $ \Phi_B^\tau $ in \cref{eq:taylor}, we have
	\begin{align}\label{eq:S2_expan}
		\mathcal{S}_2^\tau(P_N \psi(t_n)) 
		&= e^{i \tau \Delta} P_N \psi(t_n) + \tau e^{i \frac{\tau}{2} \Delta} P_N B(e^{i \frac{\tau}{2} \Delta} P_N \psi(t_n)) \notag \\
		&\quad + \tau^2 \int_0^1 (1-\theta) \gamma(\theta) \rmd \theta, 
	\end{align}
	where
	\begin{equation}\label{eq:gamma_def}
		\gamma(\theta) =  e^{i \frac{\tau}{2} \Delta} P_N dB(\Phi_B^{\theta \tau}(e^{i \frac{\tau}{2} \Delta} P_N \psi(t_n) )) [B(\Phi_B^{\theta \tau}(e^{i \frac{\tau}{2} \Delta} P_N \psi(t_n)))], \quad 0 \leq \theta \leq 1. 
	\end{equation}
	Subtracting \cref{eq:S2_expan} from \cref{eq:duhamel_modify}, we have 
	\begin{equation*}
		\mcalL^n = P_N\psi(t_{n+1}) - \mathcal{S}_2^\tau(P_N \psi(t_n)) = e_1 + e_2 + e_3 + e_h, 
	\end{equation*}
	where
	\begin{equation}\label{eq:error_decomp_S2}
		\begin{aligned}
			&e_1 = \int_0^\tau \int_0^s e^{i (\tau - s) \Delta} P_N\left( dB(e^{i(s-\vsigma)\Delta} P_N \psi(t_n+\vsigma)) [e^{i(s - \vsigma)\Delta} B(P_N \psi(t_n+\vsigma))] \right) \rmd \vsigma \rmd s, \\
			&e_2 = -\tau^2 \int_0^1 (1-\theta) \gamma(\theta) \rmd \theta, \\
			&e_3 = \int_0^\tau e^{i (\tau - s )\Delta} P_N B(e^{is \Delta} P_N \psi(t_n)) \rmd s - \tau e^{i \frac{\tau}{2} \Delta} P_N B(e^{i \frac{\tau}{2} \Delta} P_N \psi(t_n)). 
		\end{aligned}
	\end{equation}
	Set, for $0 \leq \vsigma \leq s \leq \tau$, 
	\begin{equation}\label{eq:D}
		D(s, \vsigma) = e^{i (\tau - s) \Delta} P_N \left( dB(e^{is\Delta} P_N \psi(t_n) )[e^{i(s - \vsigma)\Delta} B( e^{i \vsigma \Delta} P_N \psi(t_n))] \right).
	\end{equation}
	Noting that
	\begin{equation*}
		D\left( \frac{\tau}{2}, \frac{\tau}{2} \right) = \gamma(0) = e^{i \frac{\tau}{2} \Delta} P_N dB(e^{i\frac{\tau}{2}\Delta} P_N \psi(t_n) )[B( e^{i \frac{\tau}{2} \Delta} P_N \psi(t_n))], 
	\end{equation*}
	we have
	\begin{equation*}
		e_1 + e_2 = r_1 + r_2 + r_3, 
	\end{equation*}
	where
	\begin{equation}\label{eq:r_j}
		\begin{aligned}
			&r_1 =\int_0^\tau \int_0^s \left[e^{i (\tau - s) \Delta} P_N \left(dB(e^{i(s-\vsigma)\Delta} P_N \psi(t_n+\vsigma)) [e^{i(s - \vsigma)\Delta} B(P_N \psi(t_n+\vsigma))] \right) \right. \\
			&\qquad \qquad \qquad \left. \vphantom{e^{i (\tau - s) \Delta}}- D(s, \vsigma) \right] \rmd \vsigma \rmd s,\\
			&r_2 = \int_0^\tau \int_0^s \left[ D(s, \vsigma) - D\left( \frac{\tau}{2}, \frac{\tau}{2} \right) \right] \rmd \vsigma \rmd s, \qquad r_3 = \tau^2 \int_0^1 (1-\theta) [\gamma(0) - \gamma(\theta)] \rmd \theta.
		\end{aligned} 
	\end{equation}
	Then one has (the proof is postponed to \cref{lem:r_j})
	\begin{equation}\label{eq:r_j_L_2}
		\| r_j \|_{L^2} \lesssim \tau^3 \text{ for } j = 1, 2, 3. 
	\end{equation}
	To estimate $ e_3 $ in \cref{eq:error_decomp_S2}, let
	\begin{equation}\label{eq:f_and_phi_def}
		g^n(s) = e^{i (\tau - s )\Delta} P_N B(e^{is \Delta} P_N \psi(t_n)), \quad 0 \leq s \leq \tau. 
	\end{equation}
	Then we have
	\begin{equation}\label{eq:e3_S2}
		e_3 = \int_0^\tau \left[ g^n(s) - g^n(\tau/2) \right] \rmd s = \tau^3 \int_0^1 \ker(\theta) \partial_{ss} {g^n}(\theta \tau) \rmd \theta, 
	\end{equation}
	where $ \ker(\theta) $ is the Peano kernel for the mid-point rule (see \cite{lubich2008,review_2013}). 
	For simplicity of the presentation, we define
	\begin{equation}\label{eq:v_w_def}
		v(s) = P_N B(e^{is \Delta} P_N \psi(t_n)), \quad w(s) = e^{is \Delta} P_N \psi(t_n), \quad 0 \leq s \leq \tau. 
	\end{equation}
	Note that we have $g^n(s) = e^{i (\tau - s )\Delta} v(s) $ for $0 \leq s \leq \tau$ and 
	\begin{equation}\label{eq:parpar_g}
		\partial_{ss}{g^n}(s) = e^{i (\tau - s) \Delta} \left( \partial_{ss} {v}(s) - i\Delta \partial_s v(s) - \Delta^2 v(s) \right), \quad 0 \leq s \leq \tau. 
	\end{equation}
	Then $e_3$ \cref{eq:e3_S2} can be decomposed as
	\begin{align}\label{eq:e3_decomp_S2}
		e_3 
		&=  \tau^3 \int_0^1 \ker(\theta) e^{i (1-\theta)\tau \Delta} \left( \partial_{ss} {v}(\theta\tau) - i\Delta \partial_s v(\theta \tau) \right) \rmd \theta \notag \\
		&\quad - \tau^3 \int_0^1 \ker(\theta) e^{i (1-\theta)\tau \Delta} \Delta^2 v(\theta \tau) \rmd \theta =: e_3^1 + e_3^2. 
	\end{align}
	From \cref{eq:v_w_def}, by direct calculation, we have
	\begin{align}
		\partial_s v(s) 
		&= P_N dB(w(s)) [i\Delta w(s)] \notag \\
		&= P_N \left[ V \Delta w(s) + (1+\sigma)f(|w(s)|^2) \Delta w(s) + G(w(s)) \Delta \overline{w(s)} \right], \label{eq:par_v} \\
		\partial_{ss} v(s) 
		&= P_N \left[ iV \Delta^2 w(s) + (1+\sigma)f'(|w(s)|^2) (i\Delta w(s) \overline{w(s)} - i w(s) \Delta \overline{w(s)}) \Delta w(s) \right. \notag\\
		&\qquad \quad + (1+\sigma)f(|w(s)|^2) i \Delta^2 w(s) + dG(w(s))[i\Delta w(s)] \Delta \overline{w(s)} \notag \\
		&\qquad \quad \left. - i G(w(s)) \Delta^2 \overline{w(s)} \right]. \label{eq:parpar_v}
	\end{align}
	From \cref{eq:par_v}, recalling that $ V \in H_\text{per}^2(\Omega) $ and $ \psi \in C([0, T]; H^4_\text{per}(\Omega)) $ and using the algebra property of $H^2(\Omega)$, \cref{lem:f_H2} and the boundedness of $P_N$ on $H^2_\text{per}(\Omega)$, we have, for $0 \leq s \leq \tau$, 
	\begin{equation}\label{est_1}
		\| \Delta \partial_s v(s) \|_{L^2} \lesssim \| \partial_s v(s) \|_{H^2} \lesssim \| V \|_{H^2} \| w \|_{H^4} + C(\| w \|_{H^2})\| w \|_{H^4} \leq C(M_4). 
	\end{equation}
	From \cref{eq:parpar_v}, recalling that $ V \in H_\text{per}^2(\Omega) $ and $ \psi \in C([0, T]; H^4_\text{per}(\Omega)) $, using \cref{lem:dG_L2}, Sobolev embedding, and the boundedness of $P_N$, and noting \cref{eq:f_G_bound}, we have, 
	\begin{align}\label{est_2}
		\| \partial_{ss} v(s) \|_{L^2} 
		&\lesssim \| V \|_{L^\infty} \| w \|_{H^4} + \| w \|_{L^\infty}^{2\sigma -1} \| \Delta w \|_{L^4}^2 + \| w \|_{L^\infty}^{2\sigma} \| w \|_{H^4} \notag \\
		&\quad + C(\| w \|_{H^4}) \| \Delta w \|_{L^\infty} \leq C(M_4), \qquad 0 \leq s \leq \tau. 
	\end{align}
	Combing \cref{est_1,est_2}, recalling \cref{eq:e3_decomp_S2}, we have, by the isometry property of $e^{it\Delta}$, 
	\begin{equation}
		\| e_3^1 \|_{L^2} \lesssim \tau^3. 
	\end{equation}
	The conclusion then follows from letting $ \mcalL^n_1 = e_1 + e_2 + e_3^1 + e_h$ and $\mcalL^n_2 = e_3^2$.  
\end{proof}

\begin{lemma}\label{lem:r_j}
	Under the assumption of \cref{prop:dominant_error_ST_L2}, for $r_j \ (j = 1, 2, 3)$ defined in \cref{eq:r_j}, we have
	\begin{equation*}
		\| r_j\|_{L^2} \lesssim \tau^3, \quad j =1, 2, 3. 
	\end{equation*}
\end{lemma}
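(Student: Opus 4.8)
The plan is to exploit that each of $r_1,r_2,r_3$ in \cref{eq:r_j} carries a factor $\tau^2$ --- from the domain $\{0\le\vsigma\le s\le\tau\}$ for $r_1,r_2$, from the explicit prefactor for $r_3$ --- so that it suffices to bound the corresponding integrand in $L^2$ by $\lesssim\tau$. In every case the integrand is a difference of two $dB(\cdot)[\cdot]$-type expressions, and the extra power of $\tau$ is extracted from four ingredients, all with constants depending only on $M_4$: the Lipschitz estimates \cref{lem:diff_B1,lem:diff_dB}; the smoothing-free bound $\|(I-e^{it\Delta})\phi\|_{L^2}\lesssim t\,\|\phi\|_{H^2}$ combined with the $C^1$-in-time regularity of $\psi$; the isometry of $e^{it\Delta}$ on $L^2$ and on $H^2$; and, for $r_3$, the identity $|\Phi_B^t(\phi)|=|\phi|$, which via \cref{eq:nonl_step} gives $\Phi_B^t(\phi)-\phi=\phi\,(e^{-it\phiu{\phi}}-1)$ and hence $\|\Phi_B^t(\phi)-\phi\|_{L^2}\le t\,\|\phiu{\phi}\|_{L^\infty}\|\phi\|_{L^2}$. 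Before differencing I would record that under $V\in H^2_\text{per}(\Omega)$, $\sigma\ge1$ and $\psi\in C([0,T];H^4_\text{per}(\Omega))$ every function occupying a slot of $B$ or $dB$ in \cref{eq:r_j} is bounded in $L^\infty$ by $C(M_4)$, uniformly in $s,\vsigma,\theta$: this follows from $H^2(\Omega)\hookrightarrow L^\infty(\Omega)$, the isometry of $e^{it\Delta}$ on $H^2$, the bound $\|B(P_N\psi(t))\|_{H^2}\le C(M_4)$ of \cref{lem:B3}, the identity $\|\Phi_B^t(\phi)\|_{L^\infty}=\|\phi\|_{L^\infty}$ from \cref{lem:Phi_B^tau1}, and $|B(v)|\le(\|V\|_{L^\infty}+\|v\|_{L^\infty}^{2\sigma})|v|$; this legitimises applying \cref{lem:diff_B1,lem:diff_dB} throughout with a fixed $M=C(M_4)$.

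For $r_1$, writing $e^{is\Delta}=e^{i(s-\vsigma)\Delta}e^{i\vsigma\Delta}$, the integrand equals $e^{i(\tau-s)\Delta}P_N\big(dB(a_1)[b_1]-dB(a_2)[b_2]\big)$ with $a_1-a_2=e^{i(s-\vsigma)\Delta}P_N\big(\psi(t_n+\vsigma)-e^{i\vsigma\Delta}\psi(t_n)\big)$ and $b_1-b_2=e^{i(s-\vsigma)\Delta}\big(B(P_N\psi(t_n+\vsigma))-B(e^{i\vsigma\Delta}P_N\psi(t_n))\big)$; by isometry, then \cref{lem:diff_dB} and \cref{lem:diff_B1}, its $L^2$-norm is $\lesssim\|\psi(t_n+\vsigma)-e^{i\vsigma\Delta}\psi(t_n)\|_{L^2}\le\|\psi(t_n+\vsigma)-\psi(t_n)\|_{L^2}+\|(I-e^{i\vsigma\Delta})\psi(t_n)\|_{L^2}\lesssim\vsigma M_4\lesssim\tau$, whence $\|r_1\|_{L^2}\lesssim\tau^3$. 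For $r_3$, putting $\phi_0=e^{i\frac{\tau}{2}\Delta}P_N\psi(t_n)$, one has $\gamma(0)-\gamma(\theta)=e^{i\frac{\tau}{2}\Delta}P_N\big(dB(\phi_0)[B(\phi_0)]-dB(\Phi_B^{\theta\tau}\phi_0)[B(\Phi_B^{\theta\tau}\phi_0)]\big)$, so by isometry and \cref{lem:diff_dB} it is $\lesssim\|\Phi_B^{\theta\tau}\phi_0-\phi_0\|_{L^2}+\|B(\Phi_B^{\theta\tau}\phi_0)-B(\phi_0)\|_{L^2}$, the second summand being dominated by the first via \cref{lem:diff_B1}; since $\|\Phi_B^{\theta\tau}\phi_0-\phi_0\|_{L^2}\le\theta\tau\,\|\phiu{\phi_0}\|_{L^\infty}\|\phi_0\|_{L^2}\lesssim\tau\,C(M_4)$ and $\int_0^1(1-\theta)\,\rmd\theta<\infty$, we get $\|r_3\|_{L^2}\lesssim\tau^2\cdot\tau=\tau^3$.

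The term $r_2$ is the main obstacle, because $D(s,\vsigma)$ in \cref{eq:D} carries three independent time arguments --- the outer propagator time $\tau-s$, the time $s$ in the first slot of $dB$, and the times $s-\vsigma$ and $\vsigma$ in the inner $B$-term --- each of which must be transported to $\tau/2$ one at a time. I would peel off the outer propagator, $D(s,\vsigma)-D(\tfrac{\tau}{2},\tfrac{\tau}{2})=e^{i(\tau-s)\Delta}(X-Y)+\big(e^{i(\tau-s)\Delta}-e^{i\frac{\tau}{2}\Delta}\big)Y$ with $Y=P_N\,dB(e^{i\frac{\tau}{2}\Delta}P_N\psi(t_n))[B(e^{i\frac{\tau}{2}\Delta}P_N\psi(t_n))]$; since $\|Y\|_{H^2}\le C(M_4)$ by \cref{lem:dB_H2,lem:B3}, the second term is $\lesssim|s-\tfrac{\tau}{2}|\,C(M_4)\lesssim\tau$, while for $X-Y$ I apply \cref{lem:diff_dB} and telescope, using $\|(e^{is\Delta}-e^{i\frac{\tau}{2}\Delta})P_N\psi(t_n)\|_{L^2}\lesssim|s-\tfrac{\tau}{2}|\,M_4$ for the first slot and, for the inner term, $\|(I-e^{i(s-\vsigma)\Delta})B(e^{i\vsigma\Delta}P_N\psi(t_n))\|_{L^2}+\|B(e^{i\vsigma\Delta}P_N\psi(t_n))-B(e^{i\frac{\tau}{2}\Delta}P_N\psi(t_n))\|_{L^2}\lesssim|s-\vsigma|\,C(M_4)+|\vsigma-\tfrac{\tau}{2}|\,M_4\lesssim\tau$, the $H^2$-bound $\|B(e^{i\vsigma\Delta}P_N\psi(t_n))\|_{H^2}\le C(M_4)$ again coming from \cref{lem:B3} and the final gap from \cref{lem:diff_B1}. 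This yields $\|D(s,\vsigma)-D(\tfrac{\tau}{2},\tfrac{\tau}{2})\|_{L^2}\lesssim\tau$ and hence $\|r_2\|_{L^2}\lesssim\tau^3$. The delicate bookkeeping here is precisely where $\sigma\ge1$, $V\in H^2_\text{per}(\Omega)$ and $\psi\in C([0,T];H^4_\text{per}(\Omega))$ are needed: one requires $H^2$-control of an intermediate quantity whenever the smoothing-free estimate $\|(I-e^{it\Delta})\phi\|_{L^2}\lesssim t\|\phi\|_{H^2}$ is invoked, whereas plain $L^2$-isometry together with \cref{lem:diff_B1} suffices at the remaining places.
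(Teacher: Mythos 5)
Your proposal is correct. For $r_1$ and $r_3$ it coincides with the paper's argument up to cosmetic details: the paper bounds $\|\psi(t_n+\vsigma)-e^{i\vsigma\Delta}\psi(t_n)\|_{L^2}\lesssim\vsigma$ via Duhamel's formula and \cref{lem:B} rather than via the triangle inequality with the $C^1$-in-time bound and $\|(I-e^{i\vsigma\Delta})\psi(t_n)\|_{L^2}\lesssim\vsigma\|\psi(t_n)\|_{H^2}$, but both routes use only quantities controlled by $M_4$; the treatment of $r_3$ via $\Phi_B^{\theta\tau}(\phi_0)-\phi_0=\phi_0(e^{-i\theta\tau\phiu{\phi_0}}-1)$ is exactly the paper's. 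Where you genuinely diverge is $r_2$: the paper establishes the uniform bound $\|\nabla D(s,\vsigma)\|_{L^2}\leq C(M_4)$ by explicitly differentiating $D$ in $s$ and $\vsigma$ --- which forces a chain-rule computation of $\partial_s\bigl(dB(w(s))[e^{i(s-\vsigma)\Delta}B(w(\vsigma))]\bigr)$ and invokes \cref{lem:dG_L2} to control the resulting $dG(w(s))[\partial_s w(s)]$ term --- and then concludes $\|D(s,\vsigma)-D(\tau/2,\tau/2)\|_{L^2}\lesssim\tau$ by the mean value theorem. You instead telescope the difference directly, transporting each of the three time arguments to $\tau/2$ one at a time using the Lipschitz estimates \cref{lem:diff_dB,lem:diff_B1} together with $\|(I-e^{it\Delta})\phi\|_{L^2}\lesssim t\|\phi\|_{H^2}$ and the $H^2$-bounds of \cref{lem:B3,lem:dB_H2}. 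Your route reaches the same intermediate estimate while avoiding the derivative computation and the auxiliary lemma on $dG$ entirely, at the cost of a somewhat longer bookkeeping of $L^\infty$-admissibility for each slot (which you correctly record up front); the paper's route packages that bookkeeping once into the bound on $\nabla D$. Both are complete and use the hypotheses $V\in H^2_{\rm per}(\Omega)$, $\sigma\geq 1$, $\psi\in C([0,T];H^4_{\rm per}(\Omega))$ in the same places.
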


\begin{proof}
	Recalling \cref{eq:D}, by the boundedness of $e^{it\Delta}$ and $P_N$, \cref{lem:diff_dB,lem:B}, we have
	\begin{align}\label{eq:r_1_est}
		\| r_1 \|_{L^2} 
		&\leq \int_0^\tau \int_0^s \left\| dB(e^{i(s-\vsigma)\Delta} P_N \psi(t_n+\vsigma)) [e^{i(s - \vsigma)\Delta} B(P_N \psi(t_n+\vsigma))] \right. \notag\\
		&\left. \qquad\qquad - dB(e^{is\Delta} P_N \psi(t_n) )[e^{i(s - \vsigma)\Delta} B( e^{i \vsigma \Delta} P_N \psi(t_n))] \right\|_{L^2} \rmd \vsigma \rmd s \notag\\
		&\lesssim  \int_0^\tau \int_0^s \left( \| e^{i(s-\vsigma)\Delta} P_N \psi(t_n+\vsigma) - e^{is\Delta} P_N \psi(t_n) \|_{L^2} \right. \notag \\
		&\qquad \qquad \left. + \| e^{i(s - \vsigma)\Delta} B(P_N \psi(t_n+\vsigma)) - e^{i(s - \vsigma)\Delta} B( e^{i \vsigma \Delta} P_N \psi(t_n)) \|_{L^2} \right) \rmd \vsigma \rmd s \notag \\
		&\leq \tau^2 \sup_{0 \leq \sigma \leq \tau} \| \psi(t_n+\vsigma) - e^{i\vsigma\Delta} \psi(t_n) \|_{L^2} \notag \\ 
		&\leq \tau^2 \sup_{0 \leq \vsigma \leq \tau} \int_0^\vsigma \| e^{i(\vsigma - s)\Delta} B(\psi(t_n + s)) \|_{L^2} \rmd s \notag \\
		&\leq \tau^3 \sup_{0 \leq s \leq \tau} \| B(\psi(t_n + s)) \|_{L^2} \lesssim \tau^3, 
	\end{align}
	where we also use the Duhamel's formula for $\psi(t_n + \varsigma)$. To estimate $ r_2 $, recalling \cref{eq:D} and $w(t) = e^{it\Delta}P_N \psi(t_n)$ for $0 \leq t \leq \tau$ in \cref{eq:v_w_def}, we have
	\begin{equation}\label{par_s_D}
		\partial_s D(s, \vsigma) 
		= -i\Delta D(s, \vsigma) + e^{i (\tau - s) \Delta} P_N \partial_s \left( dB(w(s) )[e^{i(s - \vsigma)\Delta} B(w(\vsigma))] \right). 
	\end{equation}
	Recalling \cref{eq:dB_def}, by direct calculation, we have
	\begin{align*}
		&\partial_s \left( dB(w(s) )[e^{i(s - \vsigma)\Delta} B(w(\vsigma))] \right) \\
		&=-i \partial_s \left[ Ve^{i(s - \vsigma)\Delta} B(w(\vsigma)) + (1+\sigma)f(|w(s)|^2) e^{i(s - \vsigma)\Delta} B(w(\vsigma)) \right. \\
		&\qquad \qquad \left. + G(w(s)) \overline{e^{i(s - \vsigma)\Delta} B(w(\vsigma))} \right] \\
		&=V e^{i(s - \vsigma)\Delta} \Delta B(w(\vsigma)) -i(1+\sigma) f^\prime(|w(s)|^2) (w(s)\partial_s \overline{w(s)} + \overline{w(s)} \partial_s w(s))e^{i(s - \vsigma)\Delta} B(w(\vsigma)) \\
		&\quad + (1+\sigma)f(|w(s)|^2) e^{i(s - \vsigma)\Delta} \Delta B(w(\vsigma)) - i dG(w(s))[\partial_s w(s)] e^{-i(s - \vsigma)\Delta}\overline{B(w(\vsigma))} \\
		&\quad - G(w(s)) e^{-i(s - \vsigma)\Delta} \Delta \overline{B(w(\vsigma))}, 
	\end{align*}
	from which, noting \cref{eq:f_G_bound,fbound} and  
	\begin{equation}\label{v_Linfty}
		\| w(s) \|_{L^\infty} \lesssim \| w(s) \|_{H^2} \leq \| \psi(t_n) \|_{H^2} \leq M_4, \quad 0 \leq s \leq \tau, 
	\end{equation}
	we have
	\begin{align}\label{est_partial_s_dB}
		&\left \| \partial_s \left( dB(w(s) )[e^{i(s - \vsigma)\Delta} B(w(\vsigma))] \right) \right \|_{L^2} \notag \\
		&\lesssim \| B(w(\vsigma)) \|_{H^2} + \| \partial_s w(s) \|_{L^2} \| e^{i(s-\vsigma)\Delta} B(w(\vsigma)) \|_{L^\infty} \notag\\
		&\quad + \| dG(w(s))[\partial_s w(s)] \|_{L^2} \| e^{-i(s - \vsigma)\Delta}\overline{B(w(\vsigma))} \|_{L^\infty}. 
	\end{align}
	From \cref{est_partial_s_dB}, using \cref{lem:B_H2}, \cref{lem:dG_L2}, \cref{v_Linfty}, Sobolev embedding and the isometry property of $e^{it\Delta}$, we have
	\begin{align}\label{est_partial_s_dB_end}
		&\left \| \partial_s \left( dB(w(s) )[e^{i(s - \vsigma)\Delta} B(w(\vsigma))] \right) \right \|_{L^2} \notag \\
		&\lesssim C(\| V \|_{H^2}, \| \psi(t_n) \|_{H^2}) + \| \psi(t_n) \|_{H^2} \| B(w(\vsigma)) \|_{H^2} \notag\\
		&\quad + C(\| w(s) \|_{L^\infty})\| \partial_s w(s) \|_{L^2} \| B(w(\vsigma)) \|_{H^2} \leq C(M_4). 
	\end{align}
	From \cref{par_s_D}, using \cref{lem:dB_H2} and \cref{est_partial_s_dB_end} and the boundedness of $e^{it\Delta}$ and $P_N$, we obtain 
	\begin{align}\label{par_s_D_est}
		\| \partial_s D(s, \vsigma) \|_{L^2} 
		&\leq \| \Delta D(s, \vsigma) \|_{L^2} + \left \| \partial_s \left( dB(w(s) )[e^{i(s - \vsigma)\Delta} B(w(\vsigma))] \right) \right \|_{L^2} \notag \\
		&\leq C(M_4). 
	\end{align}
	Similarly, using \cref{lem:B_H2,lem:dB1}, we have $ \| \partial_\vsigma D(s, \vsigma) \|_{L^2} \leq C(M_4) $, which, combined with \cref{par_s_D_est}, yields
	\begin{equation}
		\| \nabla D(s, \vsigma) \|_{L^2} \leq C(M_4), \quad 0 \leq \vsigma \leq s \leq \tau, 
	\end{equation}
	which further implies
	\begin{equation}\label{eq:r_2_est}
		\| r_2 \|_{L^2} \lesssim \tau^3 \sup_{0 \leq \vsigma \leq s \leq \tau} \| \nabla D(s, \vsigma) \|_{L^2} \lesssim \tau^3. 
	\end{equation}
	For $r_3$ in \cref{eq:r_j}, recalling \cref{eq:gamma_def}, \cref{eq:nonl_step} and $w(t) = e^{it\Delta}P_N\psi(t_n)$, by \cref{lem:diff_dB,lem:diff_B1}, we have
	\begin{align}\label{eq:r_3_est}
		&\| r_3 \|_{L^2} \leq \tau^2 \int_0^1 \| \gamma(\theta) - \gamma(0) \|_{L^2} \rmd \theta \notag\\
		&\leq \tau^2 \int_0^1 \left \| dB(\Phi_B^{\theta \tau}(w(\tau/2))) [B(\Phi_B^{\theta \tau}(w(\tau/2)))] - dB(w(\tau/2))[B(w(\tau/2))] \right \|_{L^2} \rmd \theta \notag\\
		&\lesssim \tau^2 \sup_{0 \leq \theta \leq 1} \left ( \| \Phi_B^{\theta \tau}(w(\tau/2)) - w(\tau/2) \|_{L^2} + \| B(\Phi_B^{\theta \tau}(w(\tau/2))) - B(w(\tau/2)) \|_{L^2} \right ) \notag\\
		&\lesssim \tau^2 \sup_{0 \leq \theta \leq 1} \left \| w(\tau/2) \left (e^{-i\tau (V + f(|w(\tau/2)|^2))} -1 \right ) \right \|_{L^2} \lesssim \tau^3. 
	\end{align}
	Combing \cref{eq:r_1_est,eq:r_2_est,eq:r_3_est}, we complete the proof. 
\end{proof}

\begin{proposition}\label{prop:stability_ST_L2}
	Let $ v, w \in X_N $ such that $ \| e^{i \frac{\tau}{2}\Delta} w \|_{L^\infty} \leq M $ and $ \| v \|_{H^2} \leq M_1 $, then we have
	\begin{equation*}
		\left \| \left( \mathcal{S}_2^\tau(v) - e^{i \tau \Delta} v \right) - \left(\mathcal{S}_2^\tau (w) -  e^{i \tau \Delta} w \right) \right \|_{L^2} \leq C(\| V \|_{L^\infty}, M, M_1) \tau \| v - w \|_{L^2}. 
	\end{equation*}
\end{proposition}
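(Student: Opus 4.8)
The plan is to reduce the statement to the already-established Lie--Trotter stability estimate \cref{prop:stability_L2} by peeling off the two half-step free flows contained in $\mathcal{S}_2^\tau$. Recalling the definition \cref{eq:S1S2_S2} of $\mathcal{S}_2^\tau$ and using that $e^{i\frac{\tau}{2}\Delta}$ maps $X_N$ into $X_N$ (so that $P_N$ acts as the identity on $e^{i\frac{\tau}{2}\Delta}v$), I would first write, for $v \in X_N$,
\[
\mathcal{S}_2^\tau(v) - e^{i\tau\Delta}v = e^{i\frac{\tau}{2}\Delta} P_N\left[ \Phi_B^\tau\big(e^{i\frac{\tau}{2}\Delta}v\big) - e^{i\frac{\tau}{2}\Delta}v \right],
\]
and analogously for $w$. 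Subtracting these two identities and invoking the $L^2$-isometry of $e^{i\frac{\tau}{2}\Delta}$, the quantity to be bounded becomes
\[
\left\| P_N\big( \Phi_B^\tau(\tilde v) - \tilde v \big) - P_N\big( \Phi_B^\tau(\tilde w) - \tilde w \big) \right\|_{L^2}, \qquad \tilde v := e^{i\frac{\tau}{2}\Delta}v, \quad \tilde w := e^{i\frac{\tau}{2}\Delta}w,
\]
where $\tilde v, \tilde w \in X_N$.

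To apply \cref{prop:stability_L2} to this expression I need uniform $L^\infty$-bounds on $\tilde v$ and $\tilde w$. The bound $\|\tilde w\|_{L^\infty} \le M$ is exactly the hypothesis on $w$. For $\tilde v$ I would combine the Sobolev embedding $H^2(\Omega) \hookrightarrow L^\infty(\Omega)$ (valid in $d = 1, 2, 3$) with the fact that $e^{i\frac{\tau}{2}\Delta}$ is also an isometry on $H^2_{\mathrm{per}}(\Omega)$, which gives $\|\tilde v\|_{L^\infty} \lesssim \|\tilde v\|_{H^2} = \|v\|_{H^2} \le M_1$. This is precisely why the assumption on $v$ is stated in terms of $\|v\|_{H^2}$ rather than $\|v\|_{L^\infty}$: an $L^\infty$-bound on $v$ by itself does not control $e^{i\frac{\tau}{2}\Delta}v$ in $L^\infty$, whereas an $H^2$-bound is preserved by the free flow.

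Finally, applying \cref{prop:stability_L2} with $M$ replaced by $\max\{M, C_{\mathrm{Sob}}M_1\}$ yields
\[
\left\| P_N\big( \Phi_B^\tau(\tilde v) - \tilde v \big) - P_N\big( \Phi_B^\tau(\tilde w) - \tilde w \big) \right\|_{L^2} \le C(\|V\|_{L^\infty}, M, M_1)\, \tau\, \|\tilde v - \tilde w\|_{L^2},
\]
and one more use of the $L^2$-isometry of $e^{i\frac{\tau}{2}\Delta}$ rewrites $\|\tilde v - \tilde w\|_{L^2} = \|v - w\|_{L^2}$, which is the claim.

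I do not expect a genuine obstacle in this argument; the only two points requiring attention are the bookkeeping of which half-step free flows are absorbed by the $L^2$-isometry, and the conversion of the $H^2$-assumption on $v$ into the $L^\infty$-bound that \cref{prop:stability_L2} demands (together with the remark that one cannot replace $\|v\|_{H^2}$ by $\|v\|_{L^\infty}$ for exactly this reason). Everything else is an immediate consequence of the Lie--Trotter stability estimate already established.
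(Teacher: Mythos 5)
Your proposal is correct and follows essentially the same route as the paper's proof: peel off the half-step free flows using the identity $\mathcal{S}_2^\tau(v)-e^{i\tau\Delta}v=e^{i\frac{\tau}{2}\Delta}P_N\bigl(\Phi_B^\tau(e^{i\frac{\tau}{2}\Delta}v)-e^{i\frac{\tau}{2}\Delta}v\bigr)$, invoke the $L^2$-isometry of $e^{i\frac{\tau}{2}\Delta}$, and apply \cref{prop:stability_L2} after converting the $H^2$-bound on $v$ into an $L^\infty$-bound on $e^{i\frac{\tau}{2}\Delta}v$ via Sobolev embedding. Your explicit remark on why the hypothesis must be $\|v\|_{H^2}\leq M_1$ rather than an $L^\infty$-bound is exactly the point the paper relies on implicitly.
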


\begin{proof}
	Recalling \cref{eq:S1S2_S2} and noting that $P_N$ is an identity on $X_N$, we have
	\begin{align}\label{stab_S2}
		&\left( \mathcal{S}_2^\tau(v) - e^{i \tau \Delta} v \right) - \left(\mathcal{S}_2^\tau (w) -  e^{i \tau \Delta} w \right) \notag\\
		&= e^{i \frac{\tau}{2}\Delta} P_N \left (\Phi_B^\tau( e^{i \frac{\tau}{2}\Delta} v) - e^{i \frac{\tau}{2}\Delta} v \right) - e^{i \frac{\tau}{2}\Delta} P_N \left (\Phi_B^\tau( e^{i \frac{\tau}{2}\Delta} w) - e^{i \frac{\tau}{2}\Delta} w \right). 
	\end{align}
	By \cref{stab_S2}, using \cref{prop:stability_L2}, the isometry property of $e^{i t \Delta}$ and Sobolev embedding, we have
	\begin{align*}
		&\left \| \left( \mathcal{S}_2^\tau(v) - e^{i \tau \Delta} v \right) - \left(\mathcal{S}_2^\tau (w) -  e^{i \tau \Delta} w \right) \right \|_{L^2} \\
		&\leq \left \| P_N \left (\Phi_B^\tau( e^{i \frac{\tau}{2}\Delta} v) - e^{i \frac{\tau}{2}\Delta} v \right) - P_N \left (\Phi_B^\tau( e^{i \frac{\tau}{2}\Delta} w) - e^{i \frac{\tau}{2}\Delta} w \right) \right \|_{L^2} \\
		&\leq C(\| V \|_{L^\infty}, \| e^{i \frac{\tau}{2}\Delta} v \|_{L^\infty}, \| e^{i \frac{\tau}{2}\Delta} w \|_{L^\infty}) \| e^{i \frac{\tau}{2}\Delta} v - e^{i \frac{\tau}{2}\Delta} w \|_{L^2} \\
		&\leq C(\| V \|_{L^\infty}, \| v \|_{H^2}, \| e^{i \frac{\tau}{2}\Delta} w \|_{L^\infty}) \| v -  w \|_{L^2}, 
	\end{align*}
	which completes the proof. 
\end{proof}

\subsection{Optimal $L^2$-norm error bound}\label{sec:proof_S2_L2}
\begin{proof}[Proof of \cref{eq:ST_L2} in \cref{thm:ST}]
	Let $ e^n = P_N \psi(t_n) - \psi^n $ for $ 0 \leq n \leq T/\tau $. By the standard Fourier projection error estimates and noting that $ \psi \in C([0, T]; H^4_\text{\rm per}(\Omega)) $, we have
	\begin{equation*}
		\| \psi(t_n) - P_N \psi(t_n) \|_{L^2} \lesssim h^4. 
	\end{equation*}
	Then the proof reduces to the estimate of $ e^n $. For $  0 \leq n \leq T/\tau -1 $, 
	\begin{align}\label{eq:error_eq_S2}
		e^{n+1} 
		&= P_N \psi(t_{n+1}) - \psi^{n+1} \notag \\
		&= P_N \psi(t_{n+1}) - \mathcal{S}_2^\tau (P_N\psi(t_n)) + \mathcal{S}_2^\tau (P_N\psi(t_n)) - \mathcal{S}_2^\tau(\psi^n)\notag \\
		&= e^{i \tau \Delta} e^n + Z^n + \mcalL^n, 
	\end{align}
	where $\mcalL^n$ is defined in \cref{eq:barE_def} and
	\begin{align}
		Z^n &= \left( \mathcal{S}_2^\tau(P_N \psi(t_n)) - e^{i \tau \Delta} P_N \psi(t_n) \right) - \left(\mathcal{S}_2^\tau (\psi^n) -  e^{i \tau \Delta} \psi^n \right). \label{eq:Zn}
	\end{align}
	Iterating \cref{eq:error_eq_S2}, we have, for $ 0 \leq n \leq T/\tau -1 $, 
	\begin{equation}\label{eq:error_eq}
		e^{n+1} = e^{i(n+1)\tau\Delta} e^0 + \sum_{k=0}^{n} e^{i (n-k)\tau \Delta} \left( Z^k + \mcalL^k \right). 
	\end{equation}
	For $ Z^n $ in \cref{eq:Zn}, by \cref{prop:stability_ST_L2} and the boundedness of $e^{it\Delta}$ and $P_N$, we have
	\begin{align*}
		\| Z^n \|_{L^2} 
		&\leq C(\| V \|_{L^\infty}, \| \psi(t_n) \|_{H^2}, \| e^{i \frac{\tau}{2}\Delta} \psi^n \|_{L^\infty}) \tau \| P_N \psi(t_n) - \psi^n \|_{L^2} \\
		&\leq C(M_4, \| e^{i \frac{\tau}{2}\Delta} \psi^n \|_{L^\infty}) \tau \| e^n \|_{L^2}, \quad 0 \leq n \leq T/\tau -1, 
	\end{align*}
	which, together with the isometry property of $e^{it\Delta}$ and the triangle inequality, yields
	\begin{equation}\label{eq:Zn_sum}
		\left \| \sum_{k=0}^{n} e^{i (n-k)\tau \Delta} Z^k \right \|_{L^2} \leq \sum_{k=0}^n \| Z^k \|_{L^2} \leq C_3 \tau \sum_{k=0}^n \| e^k \|_{L^2}, \quad 0 \leq n \leq T/\tau -1, 
	\end{equation}
	where $C_3$ depends on $\max_{0 \leq k \leq n} \| e^{i \frac{\tau}{2}\Delta} \psi^k \|_{L^\infty} $ and $M_4$. It follows from \cref{prop:dominant_error_ST_L2} and the isometry property of $e^{it\Delta}$ that
	\begin{align}\label{eq:Ln_sum}
		\left\| \sum_{k=0}^{n} e^{i (n-k)\tau \Delta} \mcalL^k \right\|_{L^2} 
		&\lesssim n \tau (\tau^2 + h^4) + \left\| \sum_{k=0}^{n} e^{i (n-k)\tau \Delta} \mcalL^k_2 \right\|_{L^2} \notag \\
		&= n \tau (\tau^2 + h^4) + \left\| \tau^3 \sum_{k=0}^{n} e^{-ik \tau \Delta} \Delta^2 \eta^k  \right\|_{L^2}, 
	\end{align}
	where
	\begin{equation}\label{eq:eta}
		\eta^n = \int_0^1 \ker(\theta) e^{i (1-\theta)\tau\Delta} P_N B(e^{i\theta \tau \Delta} P_N \psi(t_n)) \rmd \theta \in X_N, \quad 0 \leq 0 \leq n \leq T/\tau -1. 
	\end{equation}
	Moreover, define
	\begin{equation}\label{eq:Gn}
		\mathcal{G}^n = \tau^3 \sum_{k=0}^{n} e^{-ik \tau \Delta} \Delta^2 \eta^k , \quad 0 \leq n \leq T/\tau -1. 
	\end{equation}
	From \cref{eq:error_eq}, using \cref{eq:Zn_sum,eq:Ln_sum} and recalling \cref{eq:Gn}, we have
	\begin{equation}\label{eq:error_eq_L2_S2}
		\| e^{n+1} \|_{L^2} \lesssim \tau^2 + h^4 + C_3\tau \sum_{k=0}^n \| e^k \|_{L^2} +\left\| \mathcal{G}^n \right\|_{L^2}.
	\end{equation}
	We shall use similar techniques as before to analyze $\mathcal{G}^n$. Note that
	\begin{equation}
		\mathcal{G}^n = \tau ^3\sum_{k=0}^n \sum_{l \in \mathcal{T}_N} e^{ i k \tau \mu_l^2} \mu_l^4 \widehat{\eta^k_l} e^{i\mu_l(x-a)}, 
	\end{equation}
	from which, exchanging the order of summation, using summation by parts and recalling \cref{eq:Snl}, we obtain
	\begin{align}\label{eq:summation_by_parts_S2}
		\mathcal{G}^n
		&= \tau^3 \sum_{l \in \mathcal{T}_N} \mu_l^4 e^{i\mu_l(x-a)} \sum_{k=0}^n e^{i k \tau \mu_l^2} \widehat{\eta^k_l} \notag \\
		&= \tau^3 \sum_{l \in \mathcal{T}_N} \mu_l^4 e^{i\mu_l(x-a)} \left( S_{n, l}  \eta^n_l - \sum_{k=0}^{n-1} S_{k, l} \left(\widehat{\eta^{k+1}_l} - \widehat{\eta^k_l}\right) \right). 
	\end{align}
	Noting that $\mu_l = 0$ when $l=0$, similar to \cref{eq:rS}, we have
	\begin{equation}\label{eq:muS}
		| \tau \mu_l^2 S_{n, l} | \lesssim 1, \quad l \in \mathcal{T}_n, \quad 0 \leq n \leq T/\tau -1, 
	\end{equation}
	where the constant is independent of $n$ and $l$. 
	From \cref{eq:summation_by_parts_S2}, using Parseval's identity and \cref{eq:muS}, we get 
	\begin{align}\label{eq:Gn_est}
		\| \mathcal{G}^n \|_{L^2}^2 
		&= (b-a)\tau^6 \sum_{l \in \mathcal{T}_N} \mu_l^8 \left| S_{n, l}  \eta^n_l - \sum_{k=0}^{n-1} S_{k, l} \left(\widehat{\eta^{k+1}_l} - \widehat{\eta^k_l}\right) \right|^2 \notag\\
		&\lesssim \tau^6 \sum_{l \in \mathcal{T}_N} \mu_l^8 S_{n, l}^2 \left|\widehat{\eta^n_l} \right|^2 +\tau^6 \sum_{l \in \mathcal{T}_N} \mu_l^8 \left|\sum_{k=0}^{n-1} S_{k, l} \left(\widehat{\eta^{k+1}_l} - \widehat{\eta^k_l}\right) \right|^2 \notag\\
		&\lesssim \tau^4 \sum_{l \in \mathcal{T}_N} \mu_l^4 \left|\widehat{\eta^n_l} \right|^2 + \tau^6 \sum_{l \in \mathcal{T}_N} \mu_l^8 \sum_{k_1=0}^{n-1} \left|S_{k_1, l}\right|^2 \sum_{k_2=0}^{n-1} \left|\widehat{\eta^{k_2+1}_l} - \widehat{\eta^{k_2}_l}\right|^2 \notag\\
		&\lesssim \tau^4 \| \eta^n \|_{H^2}^2 + \tau^4 \sum_{k_1=0}^{n-1} \sum_{k_2=0}^{n-1} \sum_{l \in \mathcal{T}_N} \mu_l^4 \left|\widehat{\eta^{k_2+1}_l} - \widehat{\eta^{k_2}_l}\right|^2 \notag\\
		&\lesssim \tau^4 \| \eta^n \|_{H^2}^2 + n\tau^4 \sum_{k=0}^{n-1} \| \eta^{k+1} - \eta^{k} \|_{H^2}^2. 
	\end{align}
	Recalling \cref{eq:eta} and using \cref{lem:B_H2} and the boundedness of $e^{it\Delta}$ and $P_N$, we have  
	\begin{equation}\label{eq:eta_n_H2}
		\| \eta^n \|_{H^2} \lesssim \sup_{0 \leq \theta \leq 1} \| B(e^{i \theta \tau \Delta}P_N \psi(t_n)) \|_{H^2} \leq C(M_4), \quad 0 \leq n \leq T/\tau - 1. 
	\end{equation}
	Moreover, for $0 \leq k \leq T/\tau -2$, we have
	\begin{equation}\label{eq:diff_eta}
		\eta^{k+1} - \eta^{k} =  \int_0^1 \ker(\theta) e^{i (1-\theta)\tau\Delta} P_N \left( B(e^{i \theta \tau \Delta} P_N \psi(t_{k+1})) - B(e^{i \theta \tau \Delta} P_N \psi(t_{k})) \right) \rmd \theta. 
	\end{equation}
	From \cref{eq:diff_eta}, using \cref{lem:diffB_H2} and the boundedness of $e^{it\Delta}$ and $P_N$, we have
	\begin{align*}
		\| v^{k+1} - v^{k} \|_{H^2} 
		&\lesssim \int_0^1 \| B(e^{i \theta \tau \Delta} P_N \psi(t_{k+1})) - B(e^{i \theta \tau \Delta} P_N \psi(t_{k})) \|_{H^2} \rmd \theta \\
		&\lesssim \int_0^1 \| e^{i \theta \tau \Delta} P_N (\psi(t_{k+1}) - \psi(t_{k})) \|_{H^2} \rmd \theta \\
		&\lesssim \| \psi(t_{k+1}) - \psi(t_{k}) \|_{H^2} \lesssim \tau \| \partial_t \psi \|_{L^\infty([t_k, t_{k+1}]; H^2)}, \quad 0 \leq k \leq \frac{T}{\tau} -2,  
	\end{align*}
	which together with \cref{eq:eta_n_H2}, inserted into \cref{eq:Gn_est}, yields
	\begin{equation*}
		\| \mathcal{G}^n \|_{L^2}^2 \lesssim \tau^4 + n^2 \tau^6 \lesssim \tau^4, \quad 0 \leq n \leq T/\tau -1. 
	\end{equation*}
	Recalling \cref{eq:error_eq_L2_S2}, we have 
	\begin{equation}\label{eq:error_eq_3}
		\| e^{n+1} \|_{L^2} \lesssim \tau^2 + h^4 + C_3 \tau \sum_{k=0}^{n} \| e^k \|_{L^2}, \quad 0 \leq n \leq T/\tau -1. 
	\end{equation}
	where $ C_3 $ depends on $ \max_{0 \leq k \leq n} \| e^{i \frac{\tau}{2}\Delta} \psi^k \|_{L^\infty} $ and $M_4$. The proof can be competed by applying discrete Gronwall's inequality to \cref{eq:error_eq_3} and using the standard induction argument with the inverse estimate 
	\begin{align}\label{eq:inverse_estimate}
		\| e^{i \frac{\tau}{2} \Delta} \psi^{k} \|_{L^\infty} 
		&\leq \|e^{i \frac{\tau}{2} \Delta} (\psi^{k} - P_N \psi(t_{k}) )\|_{L^\infty} + \| e^{i \frac{\tau}{2}\Delta} P_N \psi(t_{k}) \|_{L^\infty} \notag \\
		&\leq C_\text{inv} h^{-\frac{d}{2}} \|e^{i \frac{\tau}{2} \Delta} (\psi^{k} - P_N \psi(t_{k}) )\|_{L^2} +  C\| e^{i \frac{\tau}{2}\Delta} P_N \psi(t_{k}) \|_{H^2} \notag \\
		&= C_\text{inv} h^{-\frac{d}{2}} \| e^{k} \|_{L^2} + C(M_4)
	\end{align}
	to control the constant $C_3$ in \cref{eq:error_eq_3}. The $H^1$-norm error bound in \cref{eq:ST_L2} can be obtained similarly to \cref{eq:H1_bound_inverse} by using inverse inequalities and the time step size restriction $\tau \lesssim h^2$, and we omit the details here. Thus the proof is completed. 
\end{proof}

\subsection{Optimal $ H^1 $-norm error bound}
Then we shall show the optimal $H^1$-norm error bound \cref{eq:ST_H1} for the STFS method under the assumptions that $ V \in H^3_\text{per}(\Omega) $, $ \sigma \geq 3/2 $ and $\psi \in C([0, T]; H_\text{per}^5(\Omega)) \cap C^1([0, T]; H^3(\Omega)) \cap C^2([0, T]; H^1(\Omega))$. All the results in this subsection hold trivially for $\sigma = 1$, and we shall omit this case for simplicity. We define a constant 
\begin{equation*}
	M_5 = \max\left \{ \| V \|_{H^3}, \| \psi \|_{L^\infty([0, T]; H^5(\Omega))}, \| \partial_t \psi \|_{L^\infty([0, T]; H^3(\Omega))}, \| \partial_{tt} \psi \|_{L^\infty([0, T]; H^1(\Omega))} \right \}. 
\end{equation*}

We first show the higher order counterparts of \cref{lem:f_H2,lem:B3,lem:diff_dB,lem:dB_H2,lem:dG_L2}. 
\begin{lemma}\label{lem:f_H3} 
	When $ \sigma \geq 3/2 $, for any $ v \in H^3(\Omega) $, we have
	\begin{equation*}
		\| f(|v|^2) \|_{H^3} \leq C(\| v \|_{H^3}), \quad \| G(v) \|_{H^3} \leq C(\| v \|_{H^3}). 
	\end{equation*}
\end{lemma}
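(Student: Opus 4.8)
The plan is to mirror the proof of \cref{lem:f_H2}, carrying the chain rule one order higher. Since $\sigma \ge 3/2 > 1$, \cref{lem:f_H2} already yields $\| f(|v|^2) \|_{H^2} + \| G(v) \|_{H^2} \le C(\| v \|_{H^2}) \le C(\| v \|_{H^3})$, so it suffices to bound $\| \partial_{xxx} f(|v|^2) \|_{L^2}$ and $\| \partial_{xxx} G(v) \|_{L^2}$. With $\rho = |v|^2$ the chain rule gives
\begin{equation*}
	\partial_{xxx} f(|v|^2) = f'''(\rho)\,\rho_x^3 + 3 f''(\rho)\,\rho_x \rho_{xx} + f'(\rho)\,\rho_{xxx},
\end{equation*}
and, continuing the computation in \cref{eq:par_f},
\begin{equation*}
	\rho_{xxx} = v\,\partial_{xxx}\overline v + \overline v\,\partial_{xxx} v + 3\,\partial_x v\,\partial_{xx}\overline v + 3\,\partial_x \overline v\,\partial_{xx} v .
\end{equation*}

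First I would record the pointwise bounds. Generalizing \cref{fbound}, for $a_1, a_2 \in \mathbb{N}$ one has $| f^{(k)}(|z|^2)\,z^{a_1}\overline z^{a_2} | \lesssim |z|^{2\sigma - 2k + a_1 + a_2}$, and whenever $2\sigma - 2k + a_1 + a_2 \ge 0$ this quantity is understood as the continuous function on $\C$ equal to $0$ at $z = 0$ (as in the conventions accompanying \cref{fbound}). Expanding $\rho_x, \rho_{xx}, \rho_{xxx}$ in terms of $v$ and its derivatives, every monomial in the three groups above takes the form $c\,|v|^{2\sigma - 2k}\,v^{a_1}\overline v^{a_2}\,\partial^{m_1} v\cdots$, where the undifferentiated powers of $v$ (or $\overline v$) are exactly the ones explicitly present in $\rho_x, \rho_{xx}, \rho_{xxx}$; inspecting the terms one finds that the smallest effective exponent of $|v|$ that occurs is $2\sigma - 3$, attained for instance by $f'''(|v|^2)\,\rho_x^3$ and by $f''(|v|^2)\,\rho_x\,|\partial_x v|^2$. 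This exponent is nonnegative exactly when $\sigma \ge 3/2$, which is also what makes the above convention applicable to every term. Consequently each term is bounded pointwise by $|v|^{m}\,|\partial^{m_1} v|\,|\partial^{m_2} v|\,|\partial^{m_3} v|$ with $m \ge 0$, $m_1 + m_2 + m_3 = 3$, and at most one $m_i$ equal to $2$ or $3$.

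Then each such term is estimated in $L^2$ by Hölder's inequality: put $|v|^m$ in $L^\infty$, keep one derivative factor in $L^2$, and place the others in $L^\infty$. This is possible because in each monomial at most one derivative factor has order $\ge 2$, so that factor can be chosen as the $L^2$ one while every $\partial_x v$ factor lies in $H^2(\Omega) \hookrightarrow L^\infty(\Omega)$ (valid for $1 \le d \le 3$). Since $v \in H^3$ gives $\| v \|_{L^\infty}, \| \partial_x v \|_{L^\infty} \le C(\| v \|_{H^3})$, each term is bounded by $C(\| v \|_{H^3})$, and combining with \cref{lem:f_H2} yields $\| f(|v|^2) \|_{H^3} \le C(\| v \|_{H^3})$. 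The bound for $G$ is entirely analogous: writing $G(v) = \beta \sigma\, \rho^{\sigma - 1} v^2$ and differentiating three times by the Leibniz and chain rules produces monomials $c\,|v|^{2\sigma - 2 - 2k}\,v^{a_1 + 2}\overline v^{a_2}\,\partial^{m_1} v\cdots$ in which still at most one derivative factor has order $\ge 2$ and whose smallest effective $|v|$-exponent is again $2\sigma - 3 \ge 0$, so the same Hölder/Sobolev argument applies.

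The only real work is the bookkeeping: enumerating the finitely many monomials produced by the triple chain rule and checking that each has effective $|v|$-exponent at least $2\sigma - 3$, so that the hypothesis $\sigma \ge 3/2$ is exactly what is needed — both for the continuity convention on $f^{(k)}(|z|^2)$ times a monomial and for the $L^2$ estimates. The sharpest term, and the one pinning down the threshold, is $f'''(|v|^2)\,(v\,\partial_x\overline v + \overline v\,\partial_x v)^3$; the remaining analytic input, Hölder's inequality together with $H^2 \hookrightarrow L^\infty$ in dimensions $d \le 3$, is routine.
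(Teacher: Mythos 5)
Your proposal is correct and follows essentially the same route as the paper: differentiate $f(|v|^2)$ three times via the chain rule, observe that the worst monomial (the $f'''(|v|^2)\rho_x^3$ term and the $f''(|v|^2)\rho_x|\partial_x v|^2$ term) carries the effective exponent $|v|^{2\sigma-3}$, which is where $\sigma\ge 3/2$ enters, and then conclude by H\"older and Sobolev embedding. The only cosmetic difference is the choice of H\"older exponents — the paper distributes the derivative factors as $L^6\times L^6\times L^6$ and $L^4\times L^4$, whereas you place all but one factor in $L^\infty$ via $H^2(\Omega)\hookrightarrow L^\infty(\Omega)$ — which is immaterial for $d\le 3$.
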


\begin{proof}
	By some elementary calculation, we have the point-wise estimate
	\begin{align}\label{par_xxx_f}
		| \partial_{xxx} f(|v|^2) | 
		&\lesssim \left (\left |f^{\prime\prime\prime}(|v|^2)|v|^3\right | \left |f^{\prime\prime}(|v|^2) |v| \right | \right ) |\partial_x v|^3 \notag \\
		&\quad + \left (\left |f^{\prime\prime}(|v|^2)|v|^2 \right | + \left |f^\prime(|v|^2)\right |\right ) |\partial_x v| |\partial_{xx} v| \notag \\
		&\quad + \left |f^\prime(|v|^2) |v| \right | |\partial_{xxx} v|,  
	\end{align}
	where $ f^{\prime\prime\prime}(|z|^2)|z|^3 $, $f^{\prime\prime}(|z|^2)|z|$ and $f^{\prime\prime}(|z|^2)|z|^2$ with $z \in \C$ are defined as $0$ when $z = 0$. From \cref{par_xxx_f}, using Sobolev embedding and H\"older's inequality and noting \cref{fbound}, \cref{f_bound2} and 
	\begin{equation}
		\left |f^{\prime\prime\prime}(|z|^2)|z|^3\right | + \left |f^{\prime\prime}(|z|^2) |z| \right| \lesssim |z|^{2\sigma-3}, \quad \sigma \geq \frac{3}{2}, \quad  z \in \C, 
	\end{equation}
	we have
	\begin{align*}
		\| \partial_{xxx} f(|v|^2) \|_{L^2} 
		&\lesssim \| v \|_{L^\infty}^{2\sigma - 3} \| \partial_x v\|_{L^6}^3 + \| v \|_{L^\infty}^{2\sigma - 2} \| \partial_x \|_{L^4} \| \partial_{xx} v\|_{L^4} + \| v \|_{L^\infty}^{2\sigma - 1}\| \partial_{xxx} v \|_{L^2} \\
		&\leq C(\| v \|_{H^3}),  
	\end{align*}
	which, combined with \cref{lem:f_H2}, yields $\| f(|v|^2) \|_{H^3} \leq C(\| v \|_{H^3}) $. Similarly, we have $ \| G(v) \|_{H^3} \leq C(\| v \|_{H^3}) $ and complete the proof. 
\end{proof}

\begin{lemma}\label{lem:B4}
	Under the assumptions $ V \in H^3(\Omega) $ and $ \sigma \geq 3/2 $, for any $ v, w \in H^3(\Omega) $ such that $ \| v \|_{H^3} \leq M $, $ \| w \|_{H^3} \leq M $, we have
	\begin{equation}\label{lem:diffB_H3}
		\| B(v) - B(w) \|_{H^3} \leq C(\| V \|_{H^3}, M) \| v - w \|_{H^3}. 
	\end{equation}
	In particular, when $w = 0$, we have
	\begin{equation}\label{lem:B_H3}
		\| B(v) \|_{H^3} \leq C(\| V \|_{H^3}, M). 
	\end{equation}
\end{lemma}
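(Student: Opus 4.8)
The plan is to upgrade the proof of \cref{lem:B3} from $H^2$ to $H^3$ essentially verbatim, with \cref{lem:f_H3} playing the role of \cref{lem:f_H2}. The starting point is the decomposition $B(v) - B(w) = -i\left[ V(v-w) + \left(f(|v|^2)v - f(|w|^2)w\right) \right]$. Since $H^3(\Omega)$ is a Banach algebra for $1 \leq d \leq 3$, the linear part is controlled immediately by $\| V(v-w) \|_{H^3} \leq \| V \|_{H^3} \| v - w \|_{H^3}$, which handles the potential.

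For the nonlinear part I would reuse the path-integral trick of \cref{lem:B3}: set $u^\theta = (1-\theta)w + \theta v$ for $0 \leq \theta \leq 1$, note $\| u^\theta \|_{H^3} \leq M$ by the triangle inequality, and define $\gamma(\theta) = f(|u^\theta|^2)u^\theta$ so that, exactly as in \cref{diff_fvv,gamma_prime}, $f(|v|^2)v - f(|w|^2)w = \int_0^1 \gamma'(\theta)\,\rmd\theta$ with $\gamma'(\theta) = (1+\sigma)f(|u^\theta|^2)(v-w) + G(u^\theta)\overline{(v-w)}$. By \cref{lem:f_H3} we have $\| f(|u^\theta|^2) \|_{H^3} + \| G(u^\theta) \|_{H^3} \leq C(M)$, and since complex conjugation is an isometry on $H^3(\Omega)$, the algebra property gives $\| \gamma'(\theta) \|_{H^3} \leq C(M)\| v-w \|_{H^3}$ uniformly in $\theta$. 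Integrating over $\theta \in [0,1]$ yields $\| f(|v|^2)v - f(|w|^2)w \|_{H^3} \leq C(M)\| v-w \|_{H^3}$, and combining with the linear estimate proves \cref{lem:diffB_H3}; taking $w = 0$ gives \cref{lem:B_H3}.

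There is no genuine obstacle here: the only point requiring attention is the non-smoothness of $f$ at the origin, so that the Leibniz/chain-rule expansions of $\partial^k f(|u^\theta|^2)$ for $k \leq 3$ contain factors $f^{(j)}(|z|^2)z^m$ that must be read as $0$ at $z=0$ — but this is precisely the convention, and the role of the hypothesis $\sigma \geq 3/2$, already fixed in the statement and proof of \cref{lem:f_H3}. Accordingly I would keep the write-up to a few lines that simply cite \cref{lem:f_H3} and the algebra property of $H^3(\Omega)$ and mimic \cref{lem:B3}.
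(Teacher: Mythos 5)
Your proposal is correct and is exactly the argument the paper intends: the paper's proof of this lemma simply states that, with \cref{lem:f_H3} in hand, one repeats the proof of \cref{lem:B3} verbatim (the algebra property of $H^3(\Omega)$ for the potential term and the path integral $\gamma(\theta)=f(|u^\theta|^2)u^\theta$ with $\gamma'(\theta)=(1+\sigma)f(|u^\theta|^2)(v-w)+G(u^\theta)\overline{(v-w)}$ for the nonlinear term). Your write-up fills in precisely those details and adds nothing extraneous.
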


\begin{proof}
	With \cref{lem:f_H3}, the proof follows the same way as the proof of \cref{lem:B3} and we shall omit it for brevity. 
\end{proof}

\begin{lemma}\label{lem:diff_dB_H1}
	Under the assumptions $ V \in W^{1,4}(\Omega) $ and $ \sigma \geq 3/2 $, for any $ v_j, w_j \in H^2(\Omega) $ satisfying $ \| v_j \|_{H^2} \leq M $ and $ \| w_j \|_{H^2} \leq M $ with $ j = 1, 2 $, we have 
	\begin{equation*}
		\| dB(v_1)[w_1] - dB(v_2)[w_2] \|_{H^1} \leq C(\| V \|_{W^{1, 4}}, M) ( \| v_1 - v_2 \|_{H^1} + \| w_1 - w_2 \|_{H^1}). 
	\end{equation*}
\end{lemma}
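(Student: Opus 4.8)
The plan is to split $dB(v_1)[w_1]-dB(v_2)[w_2]$ according to the three summands of $dB(v)[w]=-i\bigl[Vw+(1+\sigma)f(|v|^2)w+G(v)\overline{w}\bigr]$, to dispatch the whole $L^2$-part to \cref{lem:diff_dB}, and to treat the first spatial derivative by the same product-rule/telescoping bookkeeping as in \cref{lem:diff_dB} but one order higher, always distributing factors into Lebesgue spaces reachable by embeddings valid for $d=1,2,3$ (so as never to invoke $H^1\hookrightarrow L^\infty$). Since $\|v_j\|_{H^2},\|w_j\|_{H^2}\le M$ gives $\|v_j\|_{L^\infty},\|w_j\|_{L^\infty}\le C(M)$ and $\|\partial_x v_j\|_{L^4},\|\partial_x w_j\|_{L^4}\le C(M)$ (using $H^2\hookrightarrow L^\infty$ and $H^1\hookrightarrow L^4$), and since $W^{1,4}\hookrightarrow L^\infty$ and $\sigma\ge 3/2\ge 1/2$, \cref{lem:diff_dB} applies and yields $\|dB(v_1)[w_1]-dB(v_2)[w_2]\|_{L^2}\le C(\|V\|_{W^{1,4}},M)(\|v_1-v_2\|_{L^2}+\|w_1-w_2\|_{L^2})$, so it remains to bound $\partial_x$ of the difference in $L^2$.

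Differentiating, the $V$-term contributes $\partial_x V\,(w_1-w_2)+V\,\partial_x(w_1-w_2)$, whose $L^2$-norm is $\lesssim\|\partial_x V\|_{L^4}\|w_1-w_2\|_{L^4}+\|V\|_{L^\infty}\|\partial_x(w_1-w_2)\|_{L^2}\lesssim C(\|V\|_{W^{1,4}})\|w_1-w_2\|_{H^1}$. For the $f$- and $G$-terms I telescope, e.g. $f(|v_1|^2)w_1-f(|v_2|^2)w_2=(f(|v_1|^2)-f(|v_2|^2))w_1+f(|v_2|^2)(w_1-w_2)$ and likewise for $G$. In the pieces where the difference sits on $w$, I differentiate and use \cref{lem:f_H2} (which, via $H^2\hookrightarrow W^{1,4}\hookrightarrow L^\infty$, gives $\|f(|v_2|^2)\|_{L^\infty},\|\partial_x f(|v_2|^2)\|_{L^4},\|G(v_2)\|_{L^\infty},\|\partial_x G(v_2)\|_{L^4}\le C(M)$) together with \cref{lem:dG_L2}, pairing by H\"older against $\|w_1-w_2\|_{H^1}$. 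In the pieces where the difference sits on $v$, I need the auxiliary bound $\|f(|v_1|^2)-f(|v_2|^2)\|_{H^1}+\|G(v_1)-G(v_2)\|_{H^1}\le C(M)\|v_1-v_2\|_{H^1}$ for $\|v_j\|_{H^2}\le M$, after which multiplication by $w_1,\overline{w_1}\in L^\infty$ with $\partial_x w_1\in L^4$ closes these pieces as well.

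The heart of the argument is that auxiliary bound. Its $L^2$ half is \cref{eq:diff_f} together with the analogous pointwise estimate $|G(z_1)-G(z_2)|\lesssim\max\{|z_1|,|z_2|\}^{2\sigma-1}|z_1-z_2|$. For the derivative half I use $\partial_x f(|v|^2)=f'(|v|^2)(v\partial_x\overline v+\overline v\partial_x v)$ and $\partial_x G(v)=dG(v)[\partial_x v]$ with $dG$ as in \cref{eq:dG_def}, and telescope keeping each positive power of $v$ attached to the singular factors $f'(|v|^2)$, $f''(|v|^2)$, so that the coefficient functions being differenced are $z\mapsto f'(|z|^2)z$, $z\mapsto f''(|z|^2)z^3$, $z\mapsto f''(|z|^2)z^2\overline z$; under the standing hypothesis $\sigma\ge 3/2$ these are homogeneous of degree $2\sigma-1\ge 2$ and hence Lipschitz on $\{|z|\le M\}$ with constant $C(M)$, which (as in \cref{fbound,f_bound2}) bounds their differences pointwise by $C(M)|v_1-v_2|$. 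Then H\"older with $v_1-v_2\in L^4$ and $\partial_x v_1\in L^4$ on the one hand, and $\partial_x(v_1-v_2)\in L^2$ and $v_2\in L^\infty$ on the other, controls every resulting term by $C(M)\|v_1-v_2\|_{H^1}$; combining with the $L^2$ half gives the auxiliary bound, and assembling all the pieces above completes the proof.

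The main obstacle is precisely the derivative half of this auxiliary Lipschitz estimate: one must organize the telescoping so that no uncompensated negative power of $|v|$ (which would blow up near the zero set of $v$) ever appears, and then split the surviving factors among Lebesgue exponents admissible simultaneously in $d=1,2,3$. This one-derivative-up refinement of the computations behind \cref{lem:diff_dB} and \cref{lem:f_H2} is exactly where the extra regularity $\sigma\ge 3/2$ on the nonlinearity and $V\in W^{1,4}$ on the potential are consumed; the remaining steps are routine product-rule estimates.
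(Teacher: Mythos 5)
Your proposal is correct and follows essentially the same route as the paper: reduce to the first-derivative bound via \cref{lem:diff_dB}, split according to the three summands of $dB$, handle the $V$-term with $\|\partial_x V\|_{L^4}\|w_1-w_2\|_{L^4}$, and control the nonlinear terms through the Lipschitz property of $z\mapsto f'(|z|^2)z$ and its relatives on bounded sets (the paper's estimate \cref{diff_fz2z}), closing with H\"older and embeddings valid for $d\le 3$. The only difference is organizational — you telescope before differentiating and package the result as an $H^1$-Lipschitz bound for $f(|\cdot|^2)$ and $G$, whereas the paper differentiates first and telescopes each product term — which changes nothing of substance.
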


\begin{proof}
	By \cref{lem:diff_dB}, it suffices to prove 
	\begin{align}\label{reduced}
		&\left \| \partial_x (dB(v_1)[w_1] - dB(v_2)[w_2]) \right \|_{L^2} \notag \\ 
		&\leq C(\| V \|_{W^{1, 4}}, M) ( \| v_1 - v_2 \|_{H^1} + \| w_1 - w_2 \|_{H^1}). 
	\end{align}
	Recalling \cref{eq:dB_def}, we have
	\begin{align}\label{Y_decomp}
		&\partial_x \left (dB(v_1)[w_1] - dB(v_2)[w_2] \right ) \notag \\
		&= -i \partial_x (V(w_1 - w_2)) -i(1+\sigma) \partial_x \left ( f(|v_1|^2)w_1 - f(|v_2|^2)w_2 \right ) \notag \\
		&\quad - i  \partial_x \left ( G(v_1)w_1 - G(v_2)w_2 \right ) = : Y_1 + Y_2 + Y_3. 
	\end{align}
	For $Y_1$, by Sobolev embedding and H\"older's inequality, we have
	\begin{equation}\label{Y1}
		\| Y_1 \|_{L^2} \leq \| \partial_x V \|_{L^4}\| w_1 - w_2 \|_{L^4} + \| V \|_{L^\infty} \| w_1 - w_2 \|_{H^1} \lesssim \| V \|_{W^{1, 4}} \| v-w \|_{H^1}. 
	\end{equation}
	For $Y_2$, we have
	\begin{align}\label{Y2_decomp}
		\| Y_2 \|_{L^2} 
		&\lesssim \| f^\prime(|v_1|^2)v_1 w_1 \partial_x \overline{v_1} - f^\prime(|v_2|^2)v_2 w_2 \partial_x \overline{v_2} \|_{L^2} \notag \\
		&\quad  + \| f^\prime(|v_1|^2)\overline{v_1} w_1 \partial_x v_1 - f^\prime(|v_2|^2)\overline{v_2} w_2 \partial_x v_2 \|_{L^2} \notag \\
		&\quad + \| f(|v_1|^2) \partial_x w_1 - f(|v_2|^2) \partial_x w_2 \|_{L^2} =: \| Y_2^1 \|_{L^2} + \| Y_2^2 \|_{L^2} + \| Y_2^3 \|_{L^2}. 
	\end{align}
	One can easily check that when $\sigma \geq 3/2$, for $z_1, z_2 \in \C$ satisfying $|z_1| \leq M_0$ and $|z_2| \leq M_0$, 
	\begin{equation}\label{diff_fz2z}
		\begin{aligned}
			&\left |f^\prime(|z_1|^2)z_1 - f^\prime(|z_2|^2)z_2 \right | \lesssim M_0^{2\sigma - 2} |z_1 - z_2|, \\ 	
			&\left |f^\prime(|z_1|^2)\overline{z_1} - f^\prime(|z_2|^2)\overline{z_2} \right | \lesssim M_0^{2\sigma - 2} |z_1 - z_2|. 
		\end{aligned}
	\end{equation}
	Using \cref{diff_fz2z}, Sobolev embedding and H\"older's inequality and noting that $|f^\prime(|z|^2)z| + |f^\prime(|z|^2)\overline{z}| \lesssim |z|^{2\sigma - 1} $ for all $ z \in \C$, we have, for $Y^1_2$ in \cref{Y2_decomp}, 
	\begin{align}\label{Y21}
		\| Y_2^1 \|_{L^2} 
		&\leq \| f^\prime(|v_1|^2)v_1 - f^\prime(|v_2|^2)v_2 \|_{L^4}\| w_1 \|_{L^\infty} \| \partial_x \overline{v_1} \|_{L^4} \notag \\
		&\quad + \| f^\prime(|v_2|^2)v_2 \|_{L^\infty} \| w_1 -  w_2 \|_{L^4} \| \partial_x \overline{v_1} \|_{L^4} \notag \\
		&\quad + \| f^\prime(|v_2|^2)v_2 \|_{L^\infty} \| w_2 \|_{L^\infty} \| \partial_x \overline{v_1} - \partial_x \overline{v_2} \|_{L^2} \notag \\
		&\leq C(M) \left (\| v_1 - v_2 \|_{L^4} + \| w_1 - w_2 \|_{L^4} + \| \partial_x v_1 - \partial_x v_2 \|_{L^2} \right ) \notag \\
		&\leq C(M) ( \| v_1 - v_2 \|_{H^1} + \| w_1 - w_2 \|_{H^1}). 
	\end{align}
	Similar to \cref{Y21}, we have, for $Y^2_2$ in \cref{Y2_decomp},
	\begin{equation}\label{Y22}
		\| Y_2^2 \|_{L^2} \leq C(M) ( \| v_1 - v_2 \|_{H^1} + \| w_1 - w_2 \|_{H^1}). 
	\end{equation}
	For $Y_2^3$ in \cref{Y2_decomp}, we have, by using \cref{eq:diff_f}, Sobolev embedding and H\"older's inequality and noting \cref{fbound}, 
	\begin{align}\label{Y23}
		\| Y_2^3 \|_{L^2} 
		&\leq \| f(|v_1|^2) - f(|v_2|^2) \|_{L^4} \| \partial_x w_1 \|_{L^4} + \| f(|v_2|^2) \|_{L^\infty} \| \partial_x w_1 - \partial_x w_2 \|_{L^2} \notag \\
		&\leq C(M) \left (\| v_1 - v_2 \|_{L^4} + \| \partial_x w_1 - \partial_x w_2 \|_{L^2} \right ) \notag \\
		&\leq C(M) ( \| v_1 - v_2 \|_{H^1} + \| w_1 - w_2 \|_{H^1}). 
	\end{align}
	Combining \cref{Y21,Y22,Y23} and noting \cref{Y2_decomp}, we have 
	\begin{equation}\label{Y2}
		\| Y_2 \|_{L^2} \leq C(M) ( \| v_1 - v_2 \|_{H^1} + \| w_1 - w_2 \|_{H^1}). 
	\end{equation}
	Similar to the estimate of $Y_2$, we have
	\begin{equation}\label{Y3}
		\| Y_3 \|_{L^2} \leq C(M) ( \| v_1 - v_2 \|_{H^1} + \| w_1 - w_2 \|_{H^1}). 
	\end{equation}
	Inserting \cref{Y1,Y2,Y3} into \cref{Y_decomp} yields \cref{reduced} and completes the proof. 
\end{proof}

\begin{lemma}\label{lem:dB_H3}
	Under the assumptions $ V \in H^3(\Omega) $ and $ \sigma \geq 3/2 $, for any $ v, w \in H^3(\Omega) $ satisfying $ \| v \|_{H^3} \leq M $, $ \| w \|_{H^3} \leq M $, we have
	\begin{equation*}
		\| dB(v)[w] \|_{H^3} \leq C(\| V \|_{H^3}, M). 
	\end{equation*}
\end{lemma}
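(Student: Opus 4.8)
The plan is to argue exactly as in the proof of \cref{lem:dB_H2}, replacing the algebra property of $H^2(\Omega)$ by that of $H^3(\Omega)$ and \cref{lem:f_H2} by the newly established \cref{lem:f_H3}. Recall from \cref{eq:dB_def} that
\begin{equation*}
	dB(v)[w] = -i\left[ Vw + (1+\sigma) f(|v|^2) w + G(v)\overline{w} \right].
\end{equation*}
Since $H^3(\Omega) \hookrightarrow L^\infty(\Omega)$ for $1 \leq d \leq 3$, the space $H^3(\Omega)$ is a Banach algebra under pointwise multiplication, so $\| \phi_1 \phi_2 \|_{H^3} \lesssim \| \phi_1 \|_{H^3} \| \phi_2 \|_{H^3}$; moreover complex conjugation is an isometry of $H^3(\Omega)$, i.e. $\| \overline{w} \|_{H^3} = \| w \|_{H^3}$.

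First I would bound the linear-in-$V$ term by $\| Vw \|_{H^3} \leq \| V \|_{H^3} \| w \|_{H^3} \leq \| V \|_{H^3} M$. Next, by \cref{lem:f_H3} we have $\| f(|v|^2) \|_{H^3} \leq C(\| v \|_{H^3}) \leq C(M)$ and $\| G(v) \|_{H^3} \leq C(M)$, whence the algebra property gives $\| f(|v|^2) w \|_{H^3} \lesssim \| f(|v|^2) \|_{H^3} \| w \|_{H^3} \leq C(M)$ and $\| G(v) \overline{w} \|_{H^3} \lesssim \| G(v) \|_{H^3} \| w \|_{H^3} \leq C(M)$. Summing the three contributions yields $\| dB(v)[w] \|_{H^3} \leq C(\| V \|_{H^3}, M)$, which is the claim.

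There is essentially no obstacle here: all of the genuine work — the delicate pointwise estimates on the non-integer powers $|v|^{2\sigma}$, $|v|^{2\sigma-1}$, $|v|^{2\sigma-2}$, $|v|^{2\sigma-3}$ and the precise role of the threshold $\sigma \geq 3/2$ — has already been absorbed into \cref{lem:f_H3} (building on \cref{fbound,f_bound2}). The only mild point worth double-checking is that the conventions at $z=0$ for the quantities $f'(|z|^2)z$, $f''(|z|^2)z^2\overline{z}$, and so on that enter $G$ and its derivatives are consistent with $|z|^0 \equiv 1$, so that no spurious singularities appear in the $H^3$ bounds; this is already guaranteed by the way \cref{lem:f_H3} and the preceding pointwise estimates are stated. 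Hence the proof reduces to these few lines and we omit further details.
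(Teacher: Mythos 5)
Your proposal is correct and follows exactly the same route as the paper, whose proof consists of the single line ``Recalling \cref{eq:dB_def} and using \cref{lem:f_H3} and the algebra property of $H^3(\Omega)$, we obtain the desired result''; you have simply written out the three term-by-term bounds explicitly. No gaps.
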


\begin{proof}
	Recalling \cref{eq:dB_def} and using \cref{lem:f_H3} and the algebra property of $H^3(\Omega)$, we obtain the desired result. 
\end{proof}

\begin{lemma}\label{lem:dG_H1}
	When $ \sigma \geq 1 $, for any $ v \in H^2(\Omega) $ and $w \in H^1(\Omega)$ satisfying $ \| v \|_{H^2} \leq M $, we have
	\begin{equation*}
		\| dG(v)[w] \|_{H^1} \leq C(M) \| w \|_{H^1}. 
	\end{equation*}
\end{lemma}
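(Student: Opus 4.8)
The plan is to reduce the $H^1$-bound to an $L^2$-bound on the spatial derivative, in the same spirit as \cref{lem:diff_dB_H1}. Since $\|v\|_{L^\infty} \lesssim \|v\|_{H^2} \leq M$ and $\sigma \geq 1 \geq 1/2$, \cref{lem:dG_L2} already gives $\|dG(v)[w]\|_{L^2} \leq C(M)\|w\|_{L^2} \leq C(M)\|w\|_{H^1}$, so it remains only to estimate $\|\partial_x dG(v)[w]\|_{L^2}$. First I would rewrite the explicit formula \cref{eq:dG_def} as $dG(v)[w] = B_1\overline{w} + B_2 w + 2B_3 w$, where $B_1 = f''(|v|^2)v^3$, $B_2 = f''(|v|^2)|v|^2 v$, $B_3 = f'(|v|^2)v$ (each understood to vanish where $v = 0$), so that $\partial_x dG(v)[w]$ splits into terms of the form $(\partial_x B_j)w$, $(\partial_x B_j)\overline{w}$, $B_j\partial_x w$ and $B_j\partial_x\overline{w}$.

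Next I would record the pointwise bounds needed. Since $f(\rho) = \beta\rho^\sigma$ gives $f^{(k)}(\rho) = c_k\beta\rho^{\sigma-k}$, each $B_j$ satisfies $|B_j| \lesssim |v|^{2\sigma-1}$, and, as $2\sigma - 1 \geq 1 > 0$ and $H^2 \hookrightarrow L^\infty$ for $d \leq 3$, this yields $\|B_j\|_{L^\infty} \leq C(M)$. Differentiating, $\partial_x B_j$ is a finite sum of (scalar function of $v$) times $\partial_x v$ or $\partial_x\overline{v}$, where every scalar factor appearing — of the form $f'(|z|^2)$, $f''(|z|^2)z^2$, $f''(|z|^2)|z|^2$, $f'''(|z|^2)z^4$, $f'''(|z|^2)|z|^2 z^2$, $f'''(|z|^2)z^3\overline{z}$, and the like — carries net power $2\sigma - 2 \geq 0$ in $|z|$ and hence stays bounded by $C(M)$ on the range of $v$; together with $H^2 \hookrightarrow W^{1,4}$ ($d \leq 3$) this gives $\|\partial_x B_j\|_{L^4} \lesssim \|v\|_{H^2}^{2\sigma-2}\|\partial_x v\|_{L^4} \leq C(M)$.

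Finally I would combine these via H\"older's inequality and the Sobolev embedding $H^1 \hookrightarrow L^4$ ($d \leq 3$): $\|(\partial_x B_j)w\|_{L^2} \leq \|\partial_x B_j\|_{L^4}\|w\|_{L^4} \leq C(M)\|w\|_{H^1}$ and $\|B_j\partial_x w\|_{L^2} \leq \|B_j\|_{L^\infty}\|\partial_x w\|_{L^2} \leq C(M)\|w\|_{H^1}$, with the conjugated terms handled identically; summing over $j$ and adding the $L^2$-bound from \cref{lem:dG_L2} completes the estimate. (When $\sigma = 1$ the claim is immediate, since then $G(v) = \beta v^2$ and $dG(v)[w] = 2\beta vw$.) I do not anticipate a genuine analytic obstacle; the only point requiring care is the bookkeeping of the differentiated $f$-factors — verifying that after each application of the chain rule the surviving power of $v$ is nonnegative for every $\sigma \geq 1$, so that the singular behaviour of $f^{(k)}$ near $v = 0$ is always absorbed.
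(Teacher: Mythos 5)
Your proposal is correct and follows essentially the same route as the paper: reduce to the $L^2$-bound of $\partial_x(dG(v)[w])$ via \cref{lem:dG_L2}, apply the product rule to the explicit formula \cref{eq:dG_def}, bound the differentiated coefficient factors pointwise by $|v|^{2\sigma-2}$ and the undifferentiated ones by $|v|^{2\sigma-1}$ (both nonnegative powers for $\sigma\geq 1$), and close with H\"older ($L^4$--$L^4$ for the $(\partial_x B_j)w$ terms, $L^\infty$--$L^2$ for the $B_j\partial_x w$ terms) and the embeddings $H^2\hookrightarrow W^{1,4}$, $H^1\hookrightarrow L^4$. The paper merely organizes the same bounds as a single pointwise estimate rather than splitting into the coefficients $B_j$.
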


\begin{proof}
	By \cref{lem:dG_L2}, it suffices to prove
	\begin{equation}\label{reduced_dG}
		\| \partial_x \left (dG(v)[w]\right ) \|_{L^2} \leq C(M) \| w \|_{H^1}. 
	\end{equation}
	Recalling \cref{eq:dG_def}, we have the point-wise estimate
	\begin{align}\label{eq:par_dG}
		\left |\partial_x \left (dG(v)[w]\right ) \right | 
		&\lesssim \left ( \left |f^{\prime\prime\prime}(|v|^2) |v|^4 \right | +  \left |f''(|v|^2) |v|^2 \right| + \left | f^\prime(|v|^2) \right | \right )|\partial_x v||w| \notag\\
		&\quad + \left ( \left |f^{\prime\prime}(|v|^2)|v|^3\right | + \left |f^\prime(|v|^2)v \right | \right )|\partial_x w|. 
	\end{align} 
	From \cref{eq:par_dG}, noting \cref{f_bound2} and 
	\begin{equation}
		\left |f^{\prime\prime\prime}(|z|^2)|z|^4 \right | + \left |f^{\prime\prime}(|z|^2)|z|^2 \right | + \left | f^\prime(|z|^2) \right | \lesssim |z|^{2\sigma-2}, \quad z \in \C, \quad \sigma \geq 1, 
	\end{equation}
	and using Sobolev embedding and H\"older's inequality, we have
	\begin{equation*}
		\| \partial_x \left (dG(v)[w]\right ) \|_{L^2} \lesssim \| v \|_{L^\infty}^{2\sigma - 2} \| \partial_x v \|_{L^4}\| w \|_{L^4} + \| v \|_{L^\infty}^{2\sigma -1} \| \partial_x w\|_{L^2} \leq C(M) \| w \|_{H^1}, 
	\end{equation*}
	which proves \cref{reduced_dG} and completes the proof. 
\end{proof}

With \cref{lem:f_H3,lem:B4,lem:diff_dB_H1,lem:dB_H3,lem:dG_H1} and the product estimate
\begin{equation}\label{eq:product_est}
	\| vw \|_{H^1} \lesssim \| v \|_{W^{1, 4}} \| w \|_{H^1}, \quad v \in W^{1, 4}(\Omega), \quad w \in H^1(\Omega), 
\end{equation}
similar to the proof of \cref{prop:dominant_error_ST_L2}, we can obtain the following local error decomposition. 
\begin{proposition}\label{prop:dominant_error_ST_H1}
	Assuming that $ V \in H^3_\text{per}(\Omega) $, $ \sigma \geq 3/2 $ and $\psi \in C([0, T]; H_\text{per}^5(\Omega)) \cap C^1([0, T]; H^3(\Omega)) \cap C^2([0, T]; H^1(\Omega))$, for the local truncation error defined in \cref{eq:barE_def}, we have
	\begin{equation*}
		\mcalL^n = \mcalL_1^n + \mcalL_2^n, \quad 0 \leq  n \leq T/\tau -1, 
	\end{equation*}
	where 
	\begin{equation*}
		\| \mcalL_1^n \|_{H^1} \lesssim \tau^3 + \tau h^4, \quad \mcalL_2^n = \tau^3 \Delta^2 \int_0^1 \ker(\theta) e^{i (1-\theta)\tau\Delta} P_N B(e^{i\theta \tau \Delta} P_N \psi(t_n)) \rmd \theta. 
	\end{equation*}
\end{proposition}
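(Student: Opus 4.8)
The plan is to repeat the proof of \cref{prop:dominant_error_ST_L2} essentially verbatim, upgrading every $L^2$-estimate there to an $H^1$-estimate by means of the higher-regularity lemmas \cref{lem:f_H3,lem:B4,lem:diff_dB_H1,lem:dB_H3,lem:dG_H1} and the product estimate \cref{eq:product_est}. I start from the modified Duhamel identity \cref{eq:duhamel_modify}, where the projection remainder $e_h$ now satisfies $\|e_h\|_{H^1}\lesssim\tau h^4$: this follows from the $H^1$-Lipschitz bounds \cref{lem:diff_B2,lem:diff_dB_H1} on $B$ and $dB$ applied along the Duhamel iterates, together with the standard spectral projection estimate $\|(I-P_N)\phi\|_{H^1}\lesssim h^4\|\phi\|_{H^5}$ and $\psi\in C([0,T];H^5_\text{\rm per})$. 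Subtracting the Taylor expansion \cref{eq:S2_expan} of $\mathcal{S}_2^\tau$ gives, as in \cref{eq:error_decomp_S2}, the decomposition $\mcalL^n=e_1+e_2+e_3+e_h$ with $e_1,e_2,e_3$ as before.

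Next I establish the $H^1$-analogue of \cref{lem:r_j}, namely $\|r_j\|_{H^1}\lesssim\tau^3$ for $j=1,2,3$, where $r_j$ is as in \cref{eq:r_j}. For $r_1$ and $r_3$ this is the argument of \cref{lem:r_j} with \cref{lem:diff_B1,lem:diff_dB} replaced by \cref{lem:diff_B2,lem:diff_dB_H1}; one uses that the Duhamel correction $\psi(t_n+\vsigma)-e^{i\vsigma\Delta}\psi(t_n)$ is $O(\tau)$ in $H^2$ (hence in $H^1$) thanks to $\|B(\psi(\cdot))\|_{H^2}\le C(M_5)$ from \cref{lem:B_H2}, so that the hypotheses of \cref{lem:diff_dB_H1} are met. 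For $r_2$ one needs $\|\nabla D(s,\vsigma)\|_{H^1}\le C(M_5)$ uniformly on $0\le\vsigma\le s\le\tau$; differentiating $D$ in $s$ as in \cref{par_s_D} and, symmetrically, in $\vsigma$, the $H^1$-norm of every resulting term is controlled by combining \cref{lem:dB_H3,lem:B4,lem:dG_H1}, the $H^3$ algebra property and the product estimate \cref{eq:product_est}, using the uniform bound $\|w(s)\|_{H^3}\le\|\psi(t_n)\|_{H^3}\le M_5$.

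The new part is $e_3$. Exactly as in \cref{eq:e3_S2,eq:e3_decomp_S2}, I write $e_3=\tau^3\int_0^1\ker(\theta)\,\partial_{ss}g^n(\theta\tau)\,\rmd\theta$ with $\partial_{ss}g^n(s)=e^{i(\tau-s)\Delta}\bigl(\partial_{ss}v(s)-i\Delta\partial_sv(s)-\Delta^2v(s)\bigr)$, split off the $\Delta^2v$ term as $e_3^2$, and set $\mcalL_2^n:=e_3^2$; since $\Delta^2$, $P_N$ and $e^{it\Delta}$ commute on $X_N$, this is exactly the claimed $\mcalL_2^n$. It then remains to prove $\|\partial_sv(s)\|_{H^3}\le C(M_5)$ and $\|\partial_{ss}v(s)\|_{H^1}\le C(M_5)$ uniformly for $s\in[0,\tau]$, which together with the isometry of $e^{it\Delta}$ on $H^1_\text{\rm per}$ gives $\|e_3^1\|_{H^1}\lesssim\tau^3$. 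The first follows from \cref{eq:par_v}, the $H^3$ algebra property, $V\in H^3_\text{\rm per}$, \cref{lem:f_H3} and $\psi\in C([0,T];H^5_\text{\rm per})$, giving $\|\partial_sv(s)\|_{H^3}\lesssim\|V\|_{H^3}\|w\|_{H^5}+C(\|w\|_{H^3})\|w\|_{H^5}\le C(M_5)$. For the second, I estimate in $H^1$ each of the five terms of $\partial_{ss}v(s)$ in \cref{eq:parpar_v} using the product rule \cref{eq:product_est}, the pointwise bounds \cref{fbound,f_bound2,eq:f_G_bound} on $f,f',f'',G$, \cref{lem:dG_H1} for the $dG(w)[i\Delta w]\Delta\overline{w}$ term, $w\in C([0,T];H^5_\text{\rm per})$, and Sobolev embeddings for the $L^4,L^6,L^\infty$ factors. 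Setting $\mcalL_1^n:=r_1+r_2+r_3+e_3^1+e_h$ then yields $\|\mcalL_1^n\|_{H^1}\lesssim\tau^3+\tau h^4$.

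I expect the main obstacle to be the $H^1$-bound of $\partial_{ss}v(s)$ in \cref{eq:parpar_v}: the terms $f'(|w|^2)\bigl(i\Delta w\,\overline{w}-iw\,\Delta\overline{w}\bigr)\Delta w$ and $dG(w)[i\Delta w]\,\Delta\overline{w}$ couple a first derivative of the nonlinearity to two factors carrying a Laplacian, so controlling them in $H^1$ genuinely forces $w\in H^5$, the sharp $\sigma$-dependent bounds on $f',f'',f''',G$, and the product estimate $\|uv\|_{H^1}\lesssim\|u\|_{W^{1,4}}\|v\|_{H^1}$; this is exactly where the hypothesis $\sigma\ge3/2$ is needed (with $\sigma=1$ handled separately, as noted before the statement). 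The bookkeeping behind $\|\nabla D(s,\vsigma)\|_{H^1}\le C(M_5)$ for $r_2$ is similarly lengthy but is a routine adaptation of the $L^2$ computation in \cref{lem:r_j}.
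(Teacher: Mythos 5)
Your proposal is correct and follows exactly the route the paper intends: the paper gives no separate proof of this proposition, stating only that it follows ``similar to the proof of \cref{prop:dominant_error_ST_L2}'' once \cref{lem:f_H3,lem:B4,lem:diff_dB_H1,lem:dB_H3,lem:dG_H1} and the product estimate \cref{eq:product_est} are available, which is precisely the $L^2\to H^1$ upgrade you carry out. Your additional detail on bounding $\partial_s v$ in $H^3$ and $\partial_{ss}v$ in $H^1$ (and $\nabla D$ in $H^1$ for $r_2$) is a faithful filling-in of what the paper leaves implicit.
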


\begin{proposition}\label{prop:stability_ST_H1}
	Let $ v, w \in X_N $ such that $ \| e^{i \frac{\tau}{2}\Delta} w \|_{L^\infty} \leq M $ and $ \| v \|_{H^2} \leq M_1 $ Then we have
	\begin{align*}
		&\left \| \left( \mathcal{S}_2^\tau(v) - e^{i \tau \Delta} v \right) - \left(\mathcal{S}_2^\tau (w) -  e^{i \tau \Delta} w \right) \right \|_{H^1} \\
		&\leq C(\| V \|_{W^{1, 4}}, \| v \|_{H^2}, \| e^{i \frac{\tau}{2} \Delta} w \|_{L^\infty}) \tau \| v - w \|_{H^1}. 
	\end{align*}
\end{proposition}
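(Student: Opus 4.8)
The plan is to mirror the proof of \cref{prop:stability_ST_L2}, replacing the $L^2$-stability estimate \cref{prop:stability_L2} of the nonlinear flow by its $H^1$-counterpart \cref{prop:stability_LT_H1}. First I would rewrite the quantity in question using the definition \cref{eq:S1S2_S2} of $\mathcal{S}_2^\tau$ together with the facts that $P_N$ is the identity on $X_N$ and $e^{i\tau\Delta}v = e^{i\frac{\tau}{2}\Delta}P_N(e^{i\frac{\tau}{2}\Delta}v)$ for $v \in X_N$. Writing $\tilde v = e^{i\frac{\tau}{2}\Delta}v$ and $\tilde w = e^{i\frac{\tau}{2}\Delta}w$, this gives
\begin{equation*}
	\left( \mathcal{S}_2^\tau(v) - e^{i\tau\Delta}v \right) - \left( \mathcal{S}_2^\tau(w) - e^{i\tau\Delta}w \right)
	= e^{i\frac{\tau}{2}\Delta}\left[ P_N\left(\Phi_B^\tau(\tilde v) - \tilde v\right) - P_N\left(\Phi_B^\tau(\tilde w) - \tilde w\right) \right],
\end{equation*}
which reduces the claim to an application of \cref{prop:stability_LT_H1} to the pair $\tilde v, \tilde w \in X_N$.

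Next I would verify the hypotheses of \cref{prop:stability_LT_H1} for $\tilde v, \tilde w$. By the isometry property of $e^{i\frac{\tau}{2}\Delta}$ on $H^2_\text{per}(\Omega)$ we have $\|\tilde v\|_{H^2} = \|v\|_{H^2} \leq M_1$, and then the Sobolev embedding $H^2(\Omega) \hookrightarrow L^\infty(\Omega)$, valid for $d=1,2,3$, yields $\|\tilde v\|_{L^\infty} \lesssim \|v\|_{H^2}\leq M_1$; the bound $\|\tilde w\|_{L^\infty}\leq M$ is assumed directly. Hence \cref{prop:stability_LT_H1} gives
\begin{equation*}
	\left\| P_N\left(\Phi_B^\tau(\tilde v) - \tilde v\right) - P_N\left(\Phi_B^\tau(\tilde w) - \tilde w\right) \right\|_{H^1}
	\leq C(\|V\|_{W^{1,4}}, M, M_1)\,\tau\,\|\tilde v - \tilde w\|_{H^1},
\end{equation*}
and using once more that $e^{i\frac{\tau}{2}\Delta}$ is an isometry on $H^1_\text{per}(\Omega)$ (so that the outer factor can be dropped when taking the $H^1$-norm and $\|\tilde v - \tilde w\|_{H^1} = \|v-w\|_{H^1}$) produces the stated estimate, with the constant depending only on $\|V\|_{W^{1,4}}$, $\|v\|_{H^2}$ and $\|e^{i\frac{\tau}{2}\Delta}w\|_{L^\infty}$. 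Note that $V \in H^3_\text{per}(\Omega) \hookrightarrow W^{1,4}(\Omega)$ in the ambient setting of this subsection, so \cref{prop:stability_LT_H1} is indeed applicable.

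I do not expect a genuine obstacle here: the analytic content is entirely contained in \cref{prop:stability_LT_H1}, and the present statement is its transcription through the two half-step propagators. The only points requiring a little care are (i) ensuring that $P_N$ is the identity on the relevant arguments so that no spurious projection error is introduced, and (ii) observing that the $L^\infty$-control in the "$v$-slot" is furnished by the $H^2$-bound rather than by a direct $L^\infty$-assumption, which is exactly why the constant is permitted to depend on $\|v\|_{H^2}$.
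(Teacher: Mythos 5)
Your proposal is correct and follows essentially the same route as the paper's proof: rewrite the difference via \cref{stab_S2} using that $P_N$ is the identity on $X_N$, apply \cref{prop:stability_LT_H1} to $e^{i\frac{\tau}{2}\Delta}v$ and $e^{i\frac{\tau}{2}\Delta}w$, and conclude with the isometry of $e^{i\frac{\tau}{2}\Delta}$ on $H^1_{\rm per}$ and $H^2_{\rm per}$ together with the Sobolev embedding to convert the $H^2$-bound on $v$ into the needed $L^\infty$-control. The verification of the hypotheses of \cref{prop:stability_LT_H1} is exactly as in the paper.
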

\begin{proof}
	Recalling \cref{stab_S2}, using \cref{prop:stability_LT_H1}, the isometry property of $e^{i t \Delta}$ and Sobolev embedding, we have
	\begin{align*}
		&\left \| \left( \mathcal{S}_2^\tau(v) - e^{i \tau \Delta} v \right) - \left(\mathcal{S}_2^\tau (w) -  e^{i \tau \Delta} w \right) \right \|_{H^1} \\
		&\leq \left \| P_N \left (\Phi_B^\tau( e^{i \frac{\tau}{2}\Delta} v) - e^{i \frac{\tau}{2}\Delta} v \right) - P_N \left (\Phi_B^\tau( e^{i \frac{\tau}{2}\Delta} w) - e^{i \frac{\tau}{2}\Delta} w \right) \right \|_{H^1} \\
		&\leq C(\| V \|_{W^{1, 4}}, \| e^{i \frac{\tau}{2}\Delta} v \|_{L^\infty}, \| e^{i \frac{\tau}{2}\Delta} w \|_{L^\infty}, \| e^{i \frac{\tau}{2}\Delta} v \|_{H^2}) \| e^{i \frac{\tau}{2}\Delta} v - e^{i \frac{\tau}{2}\Delta} w \|_{H^1} \\
		&\leq C(\| V \|_{W^{1, 4}}, \| v \|_{H^2}, \| e^{i \frac{\tau}{2}\Delta} w \|_{L^\infty}) \| v -  w \|_{H^1}, 
	\end{align*}
	which completes the proof. 
\end{proof}	

By \cref{prop:dominant_error_ST_H1,prop:stability_ST_H1}, we can establish the proof for \cref{eq:ST_H1} in a manner analogous to the proof of \cref{eq:ST_L2} in \cref{sec:proof_S2_L2}. To maintain brevity, we will not detail this process here.

\section{Numerical results}\label{sec:numerical experiments}
In this section, we present some numerical results for the NLSE with either low regularity potential or nonlinearity. In the following, we fix $ \Omega = (-16, 16) $, $ T=1 $ and $ d=1 $. Here, we only focus on temporal errors. Standard convergence orders of the Fourier spectral method can be observed as in \cite{bao2023_EWI}, and we omit them for brevity. To quantify the error, we introduce the following error functions:
\begin{align*}
	e_{L^2}(t_n) := \| \psi(\cdot, t_n) - \psi^n  \|_{L^2}, \quad  e_{H^1}(t_n) := \| \psi(\cdot, t_n) - \psi^n \|_{H^1}, \quad 0 \leq n \leq T/\tau. 
\end{align*}

\subsection{For the NLSE with low regularity potential}
In this subsection, we only consider the cubic NLSE with low regularity potential and a Gaussian initial datum as
\begin{equation}\label{eq:NLSE_low_reg_poten}
	\begin{aligned}
		&i \partial_t \psi(x, t) = -\Delta \psi(x, t) + V(x)\psi(x, t) - |\psi(x, t)|^2 \psi(x, t), \quad x \in \Omega, \quad t>0, \\
		&\psi(x, 0) = e^{-x^2/2}, \quad x \in \Omega, 
	\end{aligned}
\end{equation}
where $ V $ is chosen as $V_j$ with $ j  = 1, 2, 3, 4 $ defined as
\begin{equation}\label{eq:poten}
	\begin{aligned}
		&V_1(x) = \left\{
		\begin{aligned}
			&-4, &x \in (-2, 2) \\
			&0, &\text{otherwise}
		\end{aligned}
		\right.
		, \quad &&V_2(x) = |x|^{0.76}, \\
		&V_3(x) = |x|^{1.51} \left (1- \frac{x^2}{16^2} \right )^2, \quad &&V_4(x) = |x|^{2.51} \left (1- \frac{x^2}{16^2} \right )^3, 
	\end{aligned}
	\qquad x \in \Omega. 
\end{equation}
Note that the potential functions $V_j (1 \leq j \leq 4)$ defined in \cref{eq:poten} satisfy $ V_1 \in L^\infty(\Omega) $, $V_2 \in W^{1, 4}(\Omega) \cap H^1_\text{per}(\Omega)$, $ V_3 \in H^2_\text{per}(\Omega) $ and $ V_4 \in H^3_\text{per}(\Omega) $. 

We shall test the convergence orders of the LTFS \cref{full_discrete_LT} and the STFS \cref{full_discrete_ST} for the NLSE \cref{eq:NLSE_low_reg_poten} with $V = V_j (1 \leq j \leq 4)$. 
We remark here that the results shown in \cref{fig:LT_poten,fig:ST_poten} below cannot be observed if one use the standard Fourier pseudospectral method for spatial discretization since the spatial errors will become dominant and thus hide the temporal errors when $\tau \lesssim h^2$. The 'exact' solutions are computed by the STFS with $ \tau = \tau_\text{e} := 10^{-6} $ and $ h = h_\text{e} := 2^{-9} $. 

We start with the LTFS method and choose $V = V_1$ and $V=V_2$ for optimal first-order $L^2$- and $H^1$-norm error bounds, respectively. The numerical results are shown in \cref{fig:LT_poten}, where the marker on the line corresponding to $h = 2^{-2}/2^k$ is placed at $\tau = \tau_0/4^k$ for $k = 0, 1, \cdots, 4$ to highlight the data points satisfying $\tau \sim h^2$. We can observe that the optimal first-order $L^2$- and $H^1$-norm error bounds are only valid when $\tau \lesssim h^2$, and there is order reduction when $ \tau \gg h^2 $ (approximate half order as observed in the numerical results). This confirms our error bounds in \cref{thm:LT} for the NLSE with low regularity potential and indicates that the step size restriction $\tau \lesssim h^2$ is necessary and optimal. 
\begin{figure}[htbp]
	\centering
	{\includegraphics[width=0.475\textwidth]{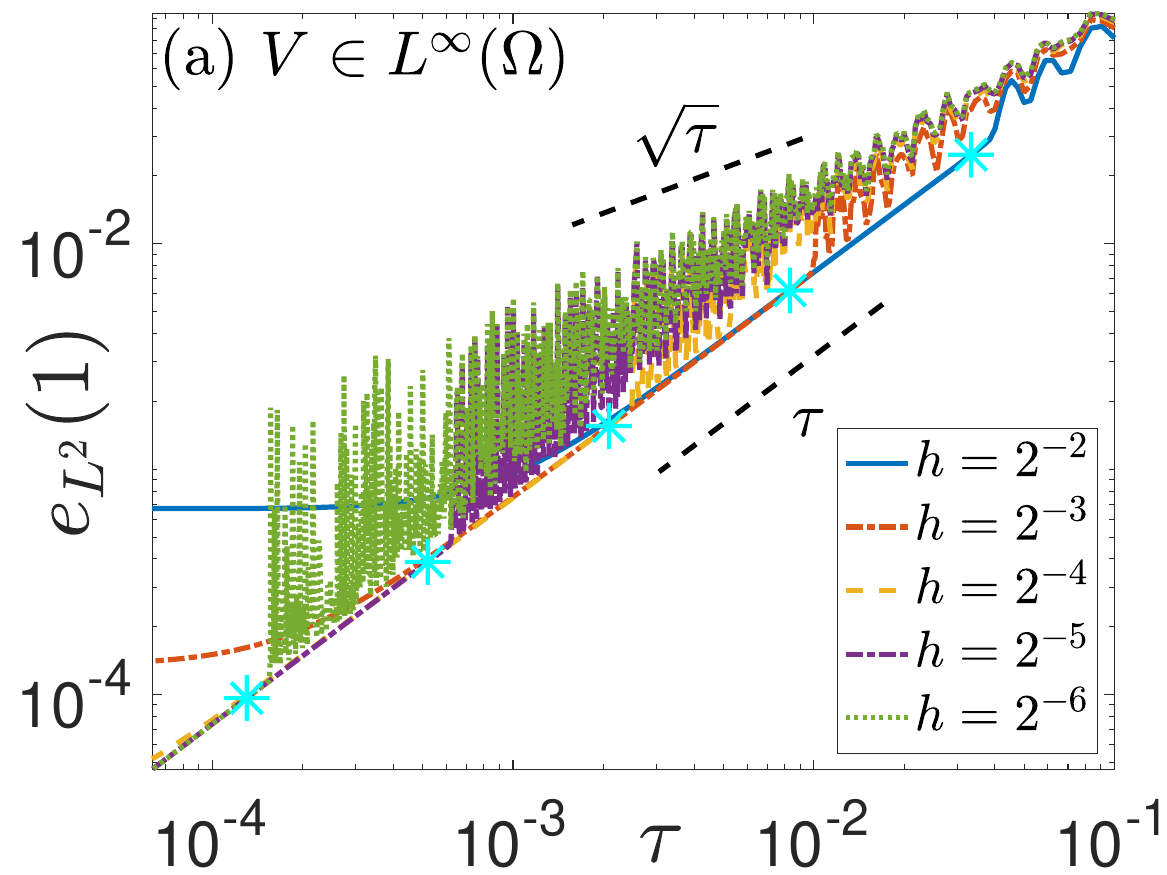}}\hspace{1em}
	{\includegraphics[width=0.475\textwidth]{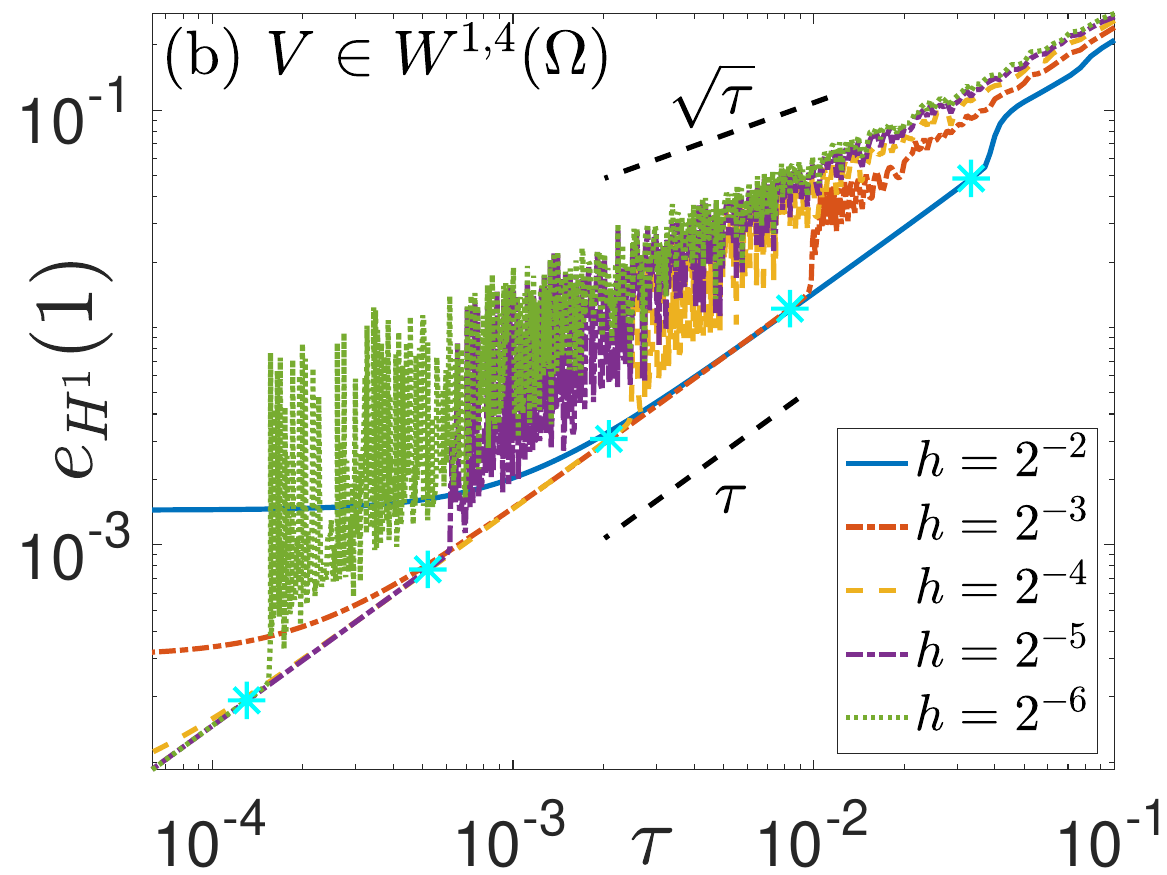}}
	\caption{Temporal errors of the LTFS method with different mesh sizes for \cref{eq:NLSE_low_reg_poten}: (a) $ V=V_1 \in L^\infty(\Omega) $ and (b) $ V = V_2 \in W^{1, 4}(\Omega) \cap H^1_\text{per}(\Omega) $.}
	\label{fig:LT_poten}
\end{figure}

Then we present the results of the STFS method with $V = V_3$ and $V = V_4$ for optimal second-order $L^2$- and $H^1$-norm error bounds, respectively. \cref{fig:ST_poten} exhibits that the optimal second-order $L^2$- and $H^1$-norm error bounds can be observed only when $\tau \lesssim h^2$, and there is order reduction when $\tau \gg h^2$ (approximately first order as observed in the numerical results). This observation validates our error bounds in \cref{thm:ST} for the NLSE with low regularity potential and indicates that the step size restriction $\tau \lesssim h^2$ is necessary and optimal. 

\begin{figure}[htbp]
	\centering
	{\includegraphics[width=0.475\textwidth]{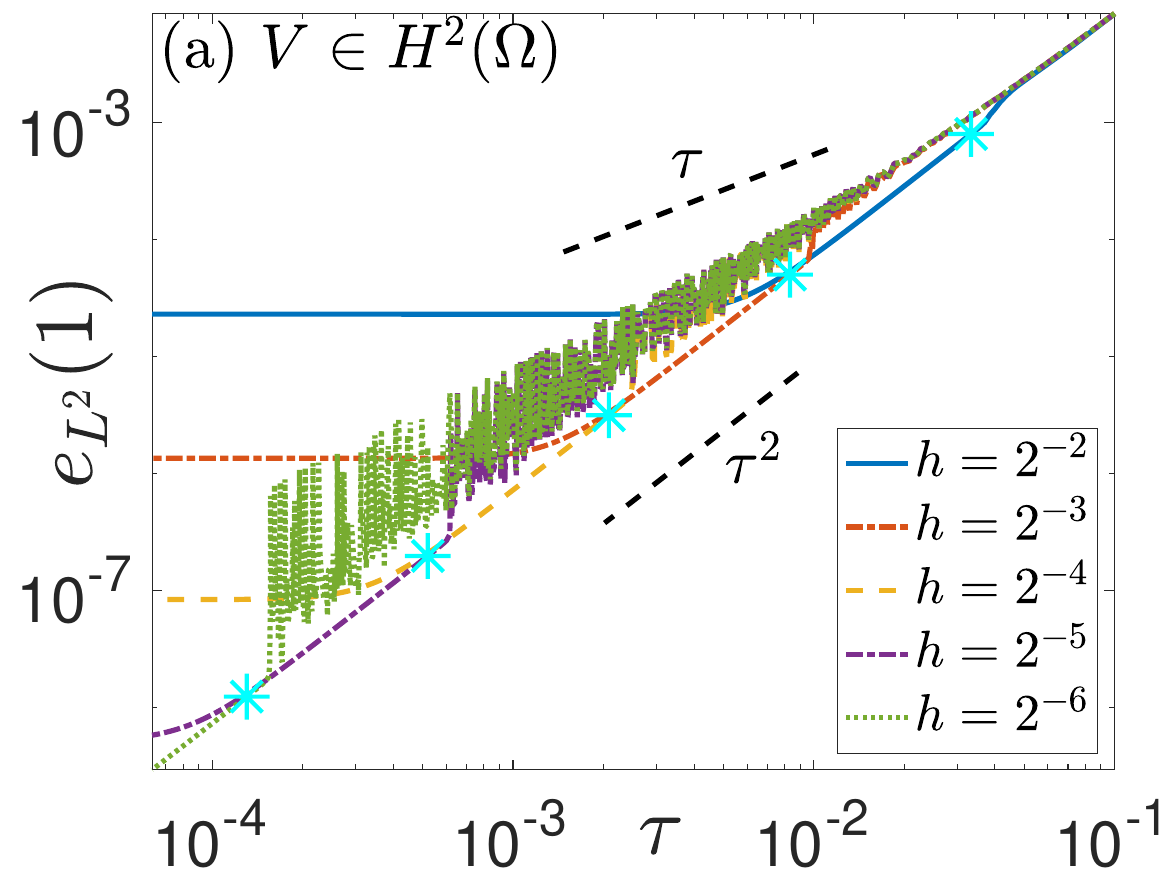}}\hspace{1em}
	{\includegraphics[width=0.475\textwidth]{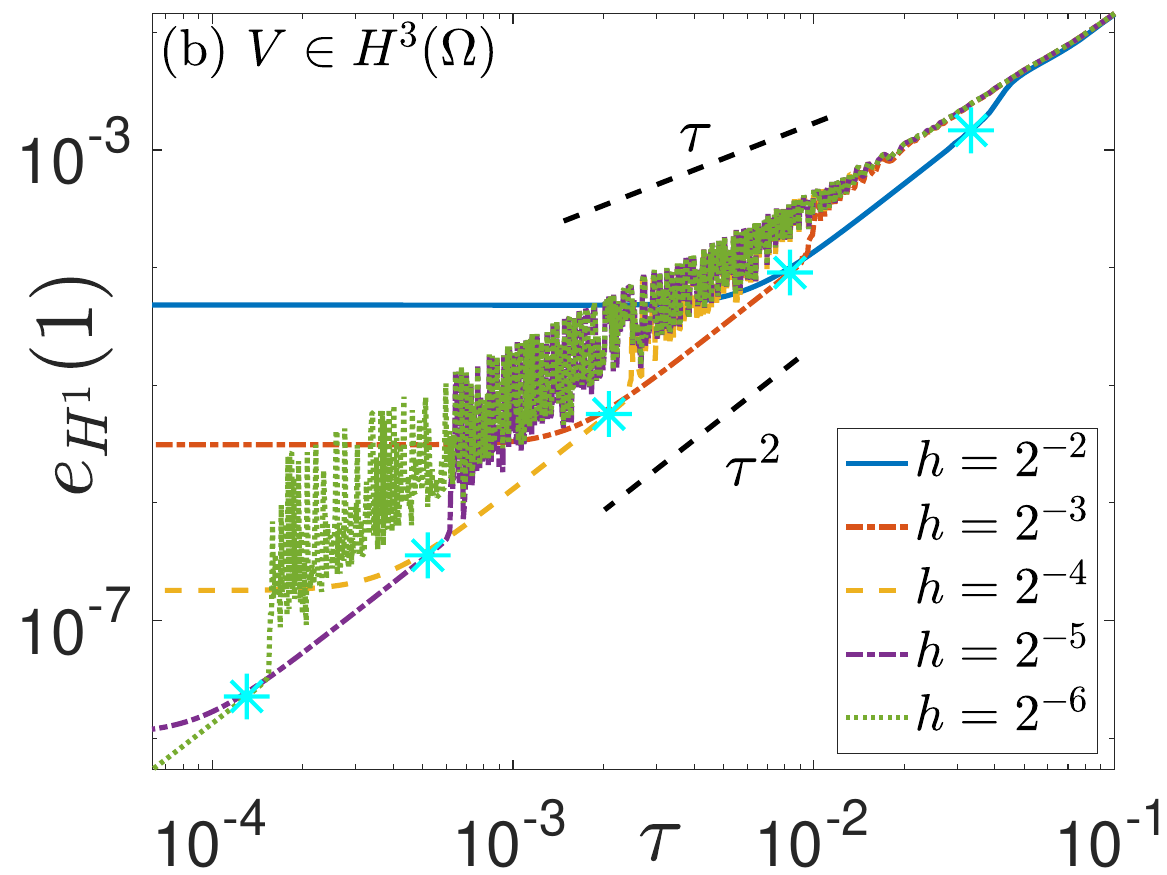}}
	\caption{Temporal errors of the STFS method with different mesh sizes for \cref{eq:NLSE_low_reg_poten}: (a) $ V=V_3 \in H^2_\text{per}(\Omega) $ and (b) $ V = V_4 \in H^3_\text{per}(\Omega) $.}
	\label{fig:ST_poten}
\end{figure}

\subsection{For the NLSE with low regularity nonlinearity}
In this subsection, we consider the NLSE with power-type nonlinearity and without potential as
\begin{equation}\label{eq:NLSE_low_reg_nonl}
	\begin{aligned}
		&i \partial_t \psi(x, t) = -\Delta \psi(x, t) - |\psi(x, t)|^{2\sigma} \psi(x, t), \quad x \in \Omega, \quad t>0, \\
		&\psi_0(x) = xe^{-\frac{x^2}{2}}, \quad x \in \Omega. 
	\end{aligned}
\end{equation}
The initial data is chosen as an odd function such that the solution to \cref{eq:NLSE_low_reg_nonl} will satisfy $\psi(0, t) \equiv 0$ for all $t \geq 0$ to demonstrate the influence of the low regularity of the nonlinearity at the origin. Moreover, we numerically check that the exact solution $ \psi(t) \in H^{3.5+2\sigma}(\Omega) $ and thus the assumptions on the exact solution in \cref{thm:LT,thm:ST} are satisfied. 

In this subsection, we shall test the convergence orders of the LTFS \cref{full_discrete_LT} and the STFS \cref{full_discrete_ST} for the NLSE \cref{eq:NLSE_low_reg_nonl} with different $\sigma > 0$. The 'exact' solutions are computed by the STFS \cref{full_discrete_ST} with $ \tau = \tau_\text{e} := 10^{-6} $ and $ h = h_\text{e} := 2^{-9} $. 

\cref{fig:LT_nonl_diff_h} exhibits the temporal errors in $L^2$- and $H^1$-norm of the LTFS method with different mesh size $h$ for $\sigma = 0.1\ (\sigma>0)$ and $\sigma = 0.5 \ (\sigma \geq 1/2)$, respectively. \cref{fig:LT_nonl_diff_h} (a) shows that the temporal convergence is first order in $L^2$-norm when $\sigma = 0.1$ and \cref{fig:LT_nonl_diff_h} (b) shows that the temporal convergence is first order in $H^1$-norm when $\sigma = 0.5$. The results in \cref{fig:LT_nonl_diff_h} confirm our optimal error bounds in \cref{thm:LT} for the NLSE with low regularity nonlinearity. 

\cref{fig:ST_nonl_diff_h} displays the temporal errors in $L^2$- and $H^1$-norm of the STFS method with different mesh size $h$ for $\sigma = 1.1 \ (\sigma \geq 1)$ and $\sigma = 1.5 \ (\sigma \geq 3/2)$, respectively. \cref{fig:ST_nonl_diff_h} (a) shows that the temporal convergence is second order in $L^2$-norm when $\sigma = 1.1$ and \cref{fig:ST_nonl_diff_h} (b) shows that the temporal convergence is second order in $H^1$-norm when $\sigma = 1.5$. The results in \cref{fig:ST_nonl_diff_h} confirm our optimal error bounds in \cref{thm:ST} for the NLSE with low regularity nonlinearity. 

Nevertheless, the numerical results in \cref{fig:LT_nonl_diff_h,fig:ST_nonl_diff_h} indicate that the temporal convergence order seems to be independent of the mesh size $h$, which suggests that the step size restriction $\tau \lesssim h^2$ may be relaxed in cases of purely low regularity nonlinearity. This phenomenon will be further investigated in our future work. It is worth noting that this step size restriction remains necessary and optimal in the presence of low regularity potential, as discussed in the preceding subsection. 

\begin{figure}[htbp]
	\centering
	{\includegraphics[width=0.475\textwidth]{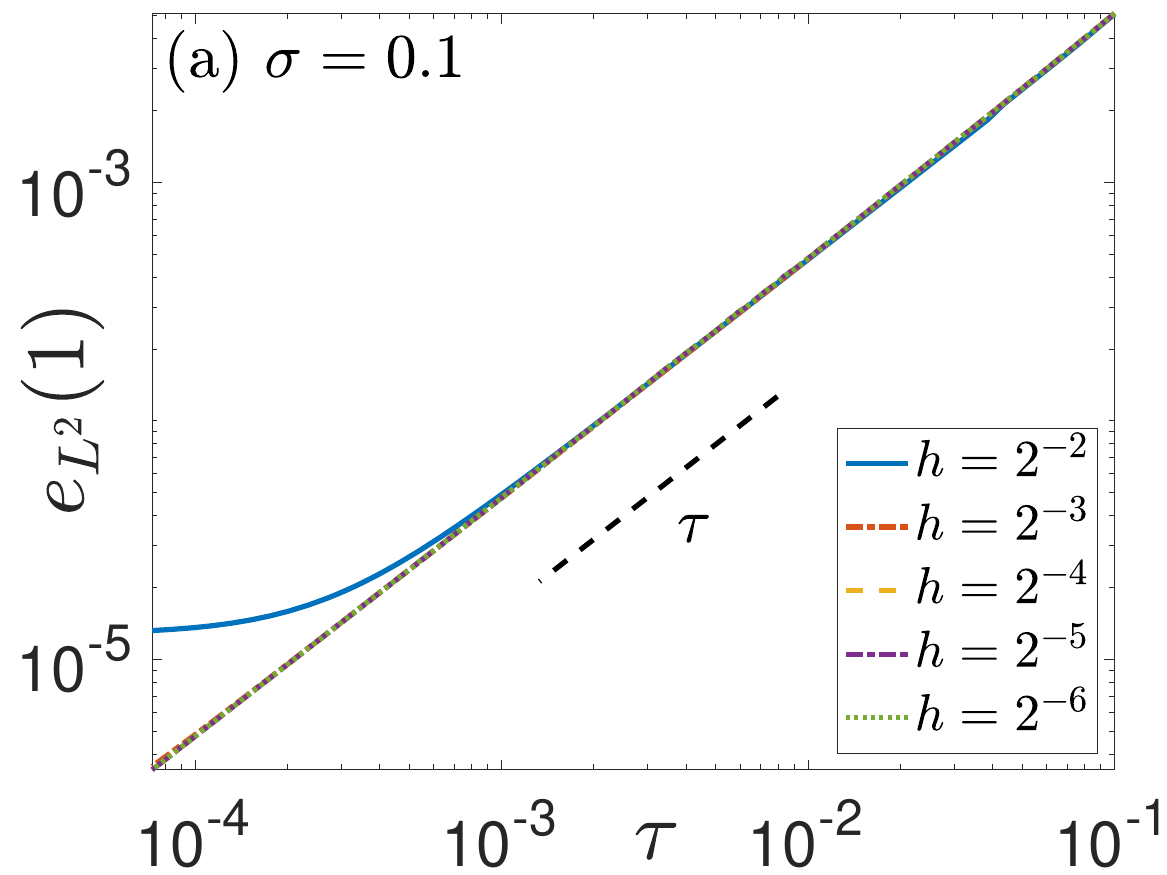}}\hspace{1em}
	{\includegraphics[width=0.475\textwidth]{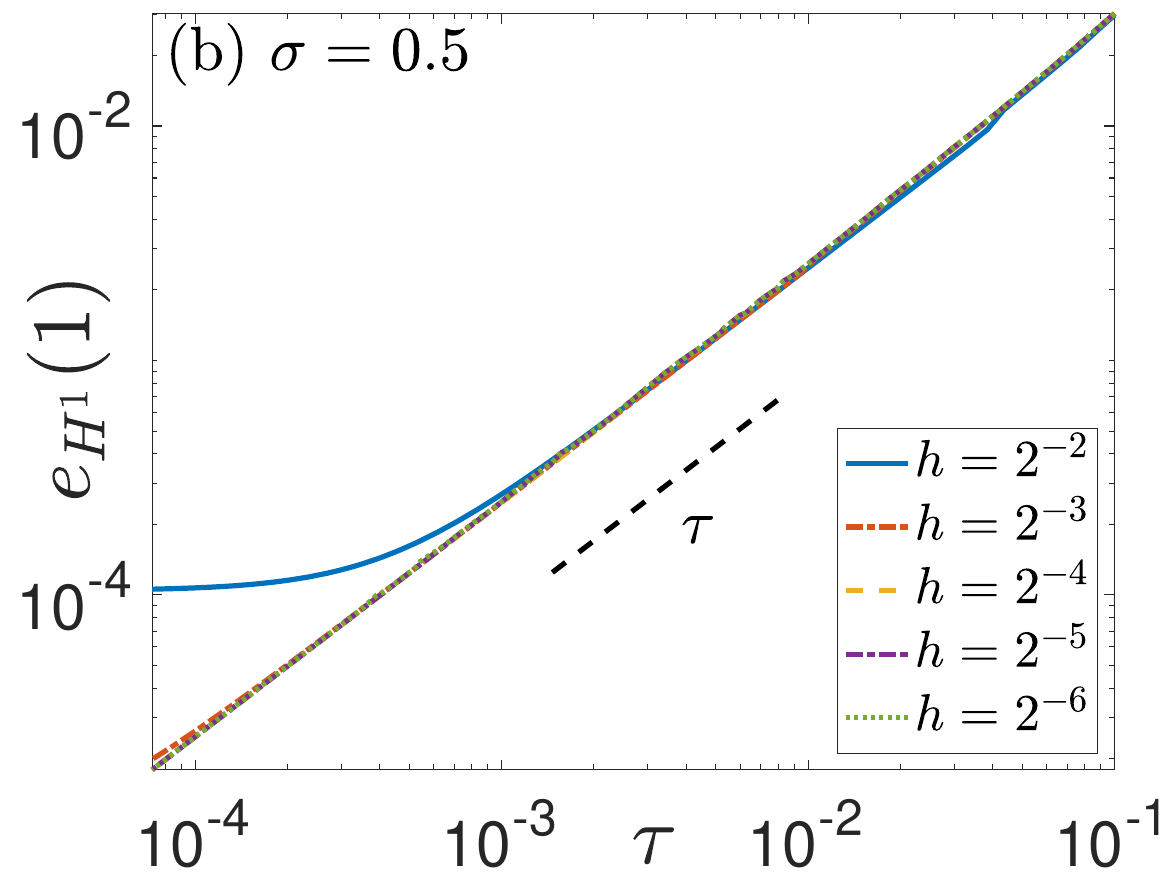}}
	\caption{Temporal errors of the LTFS method with different mesh sizes for \cref{eq:NLSE_low_reg_nonl}: (a) $\sigma = 0.1$ and (b) $ \sigma = 0.5 $. }
	\label{fig:LT_nonl_diff_h}
\end{figure}

\begin{figure}[htbp]
	\centering
	{\includegraphics[width=0.475\textwidth]{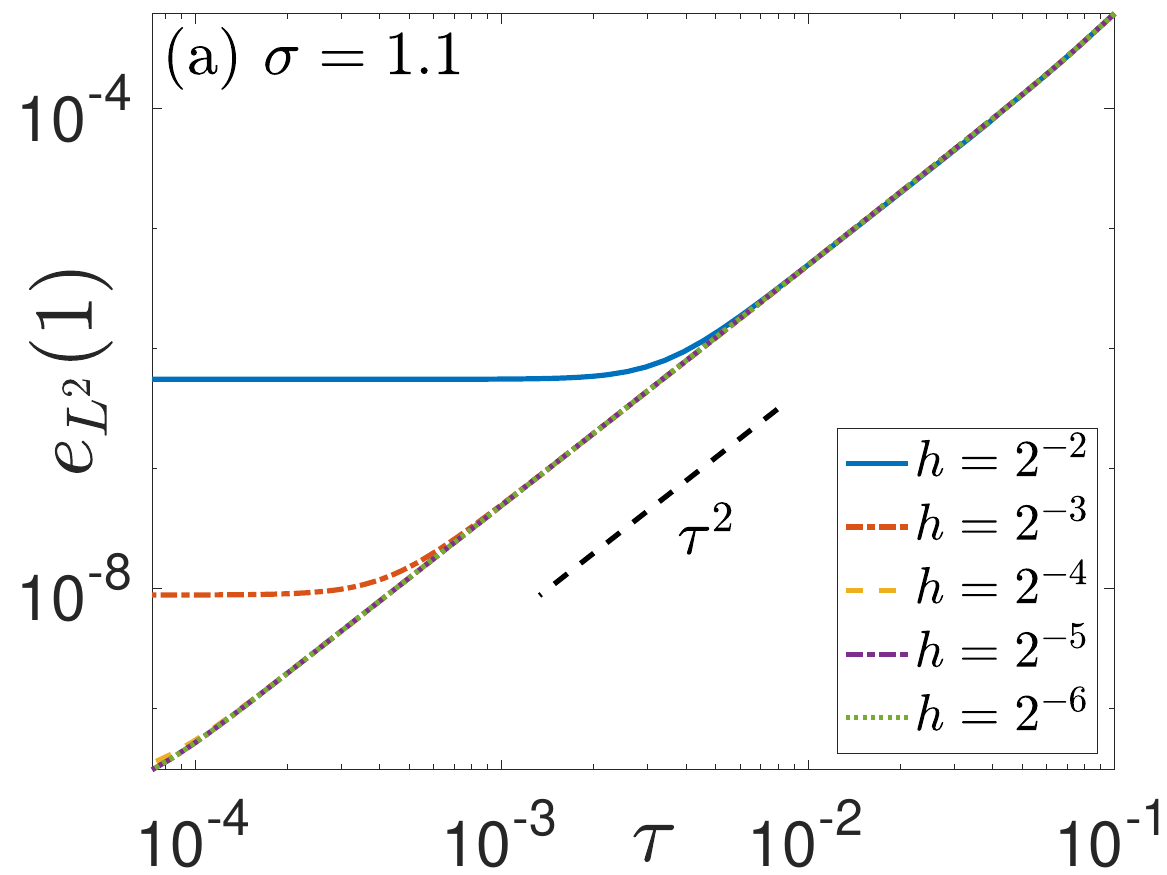}}\hspace{1em}
	{\includegraphics[width=0.475\textwidth]{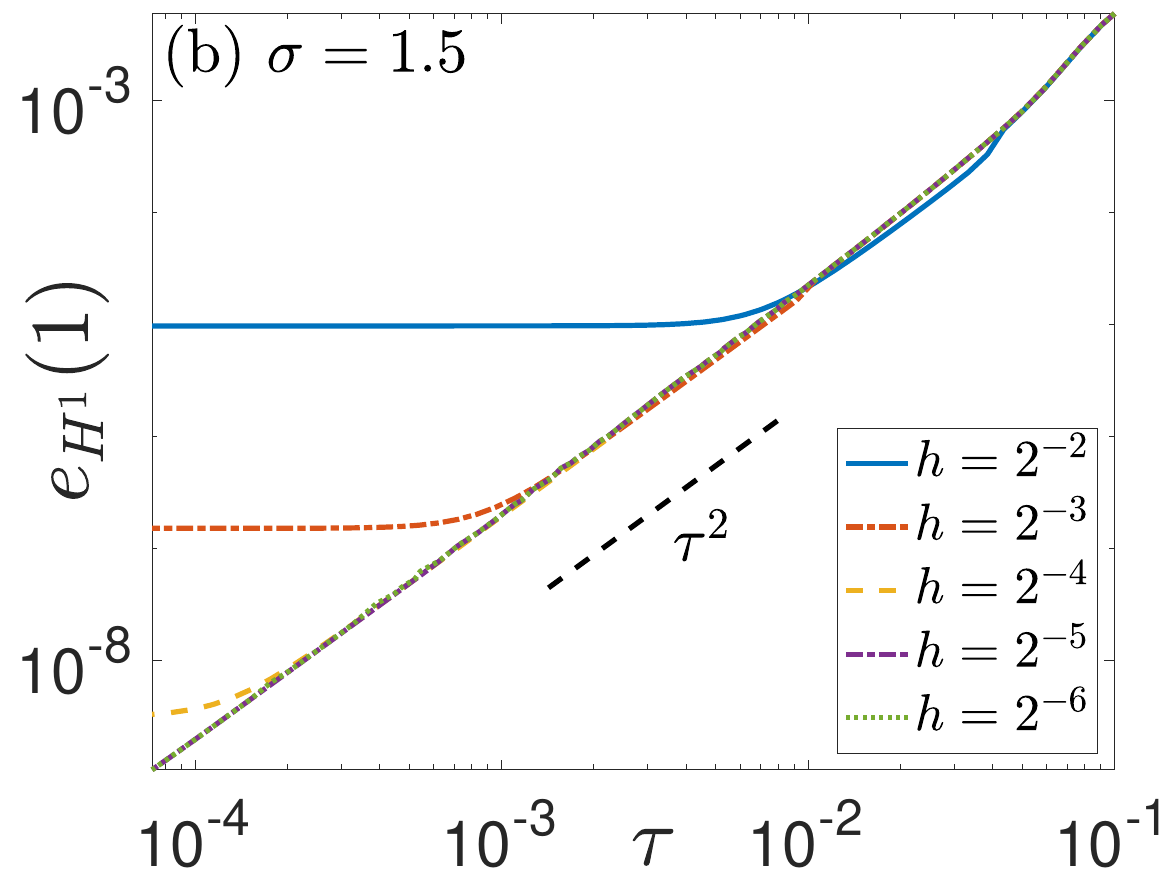}}
	\caption{Temporal errors of the STFS method with different mesh sizes for \cref{eq:NLSE_low_reg_nonl}: (a) $ \sigma = 1.1 $ and (b) $ \sigma = 1.5 $. }
	\label{fig:ST_nonl_diff_h}
\end{figure}

Then we show the temporal errors of the LTFS and the STFS methods for different $\sigma>0$ with a fixed $h = h_e = 2^{-9}$, which can be regarded as testing the convergence at the time semi-discrete level. We choose several time steps $\tau$ from $10^{-4}$ to $10^{-1}$. From \cref{fig:LT_no_step_size_restriction,fig:ST_no_step_size_restriction}, we observe that: when there is no step size restriction, first-order $L^2$-norm error bound holds for any $\sigma>0$ while the first-order $H^1$-norm error bound only holds for $\sigma \geq 1/2$; and second-order $L^2$- and $H^1$-norm error bounds can only hold for $\sigma\geq1$ and $\sigma \geq 3/2$, respectively. This observation implies that the threshold values of $\sigma$ in \cref{thm:LT,thm:ST} for optimal convergence orders are indeed sharp at semi-discrete level. 
\begin{figure}[htbp]
	\centering
	{\includegraphics[width=0.475\textwidth]{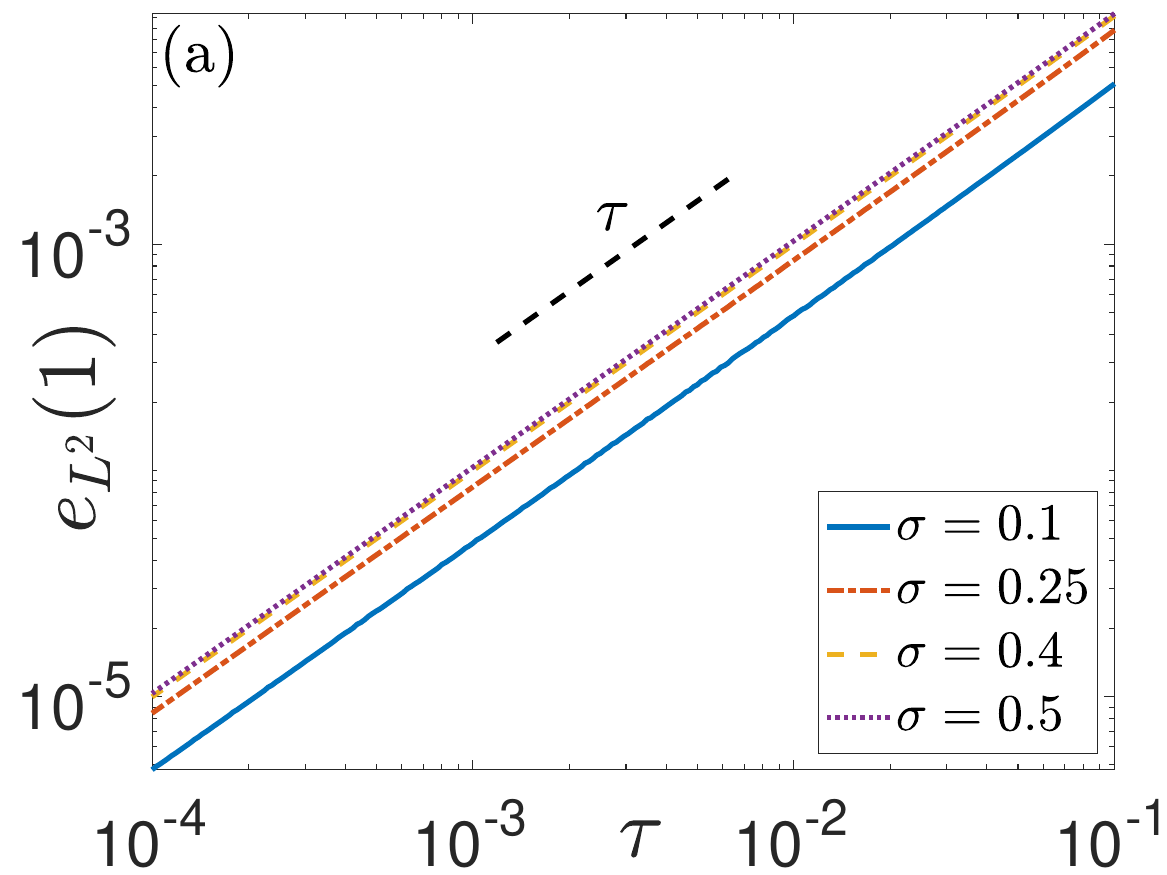}}\hspace{1em}
	{\includegraphics[width=0.475\textwidth]{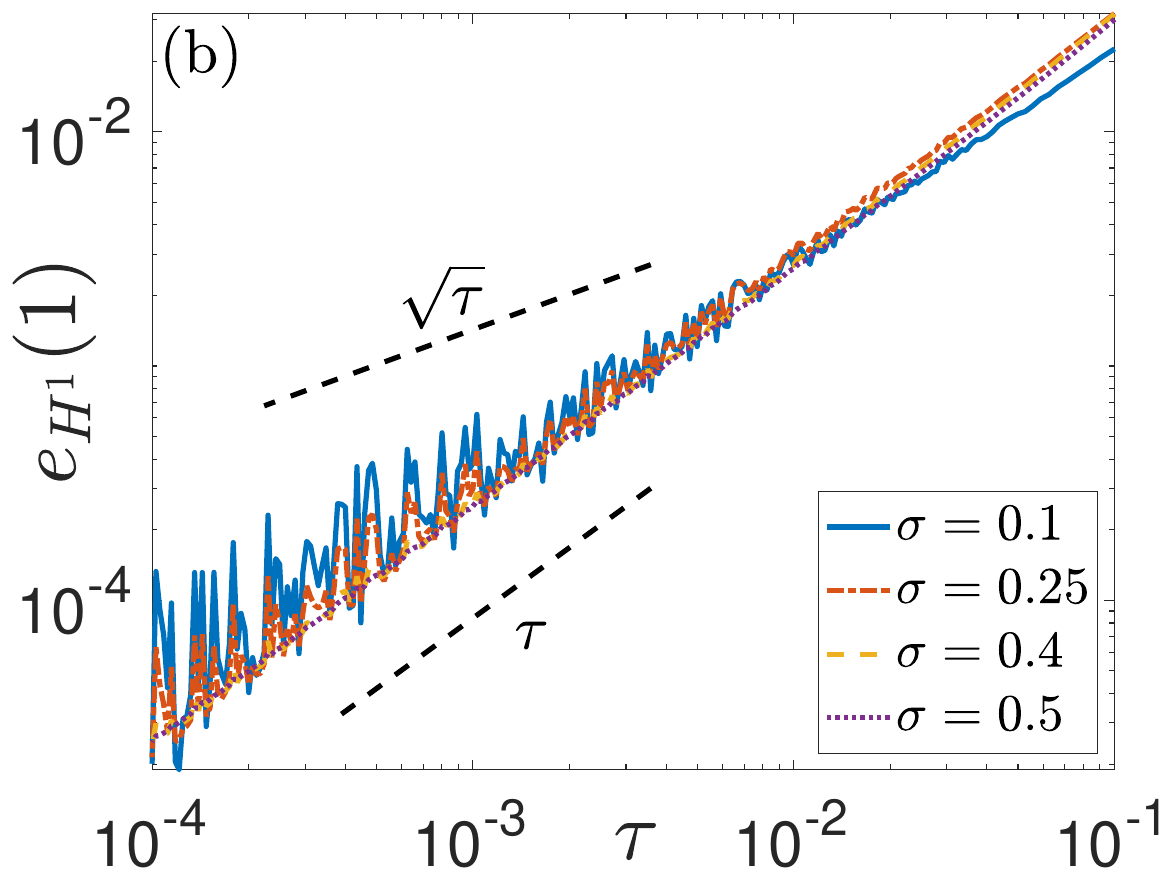}}
	\caption{Temporal errors of the LTFS method for \cref{eq:NLSE_low_reg_nonl} with fixed mesh size $ h=h_e $ and different values of $\sigma$: (a) $L^2$-error and (b) $H^1$-error. }
	\label{fig:LT_no_step_size_restriction}
\end{figure}

\begin{figure}[htbp]
	\centering
	{\includegraphics[width=0.475\textwidth]{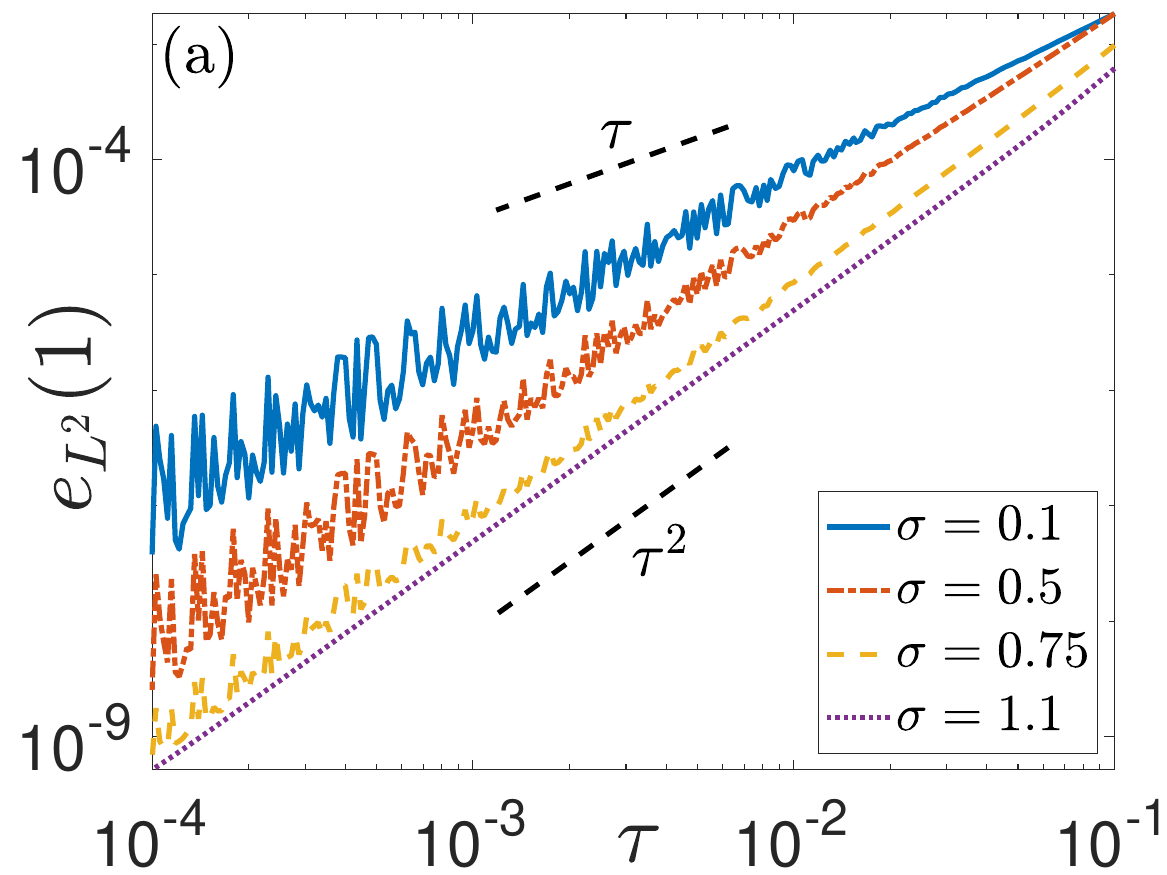}}\hspace{1em}
	{\includegraphics[width=0.475\textwidth]{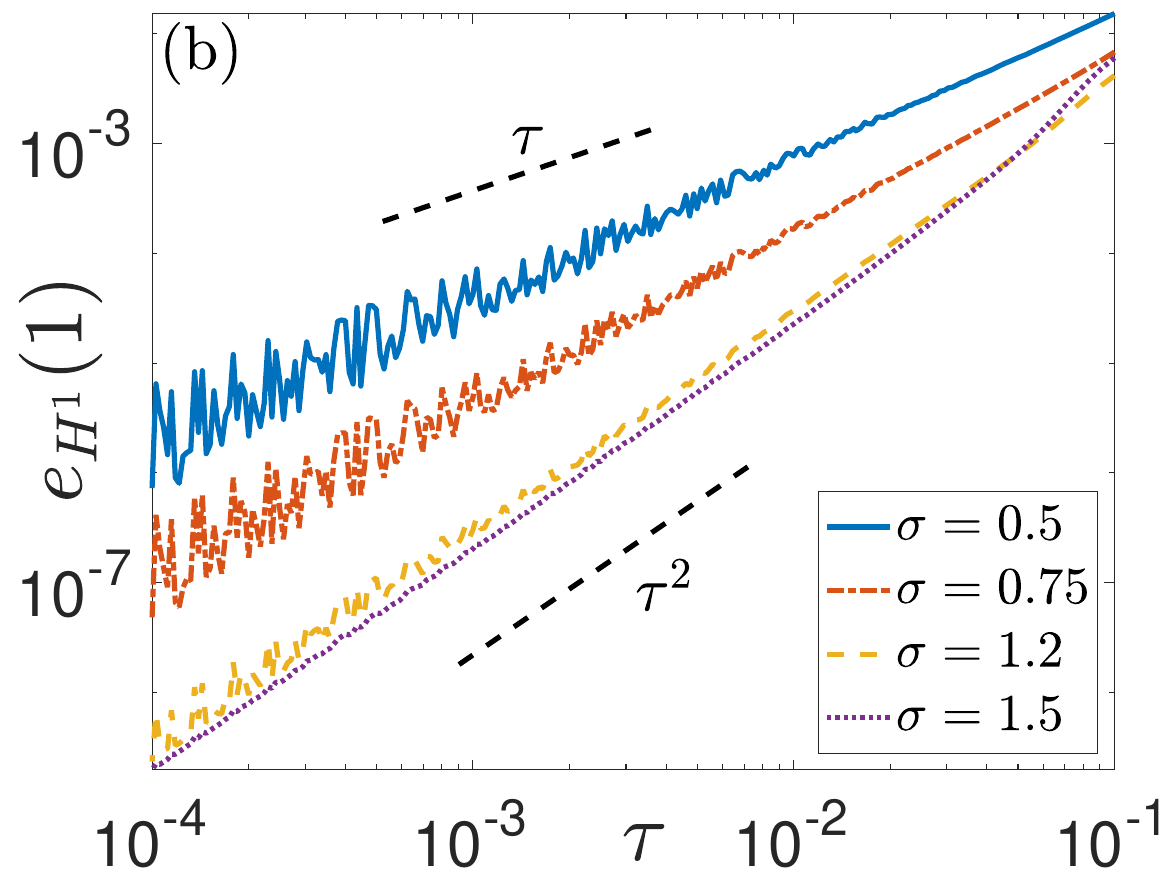}}
	\caption{Temporal errors of the STFS method for \cref{eq:NLSE_low_reg_nonl} with fixed mesh size $ h=h_e $ and different values of $\sigma$: (a) $L^2$-error and (b) $H^1$-error. }
	\label{fig:ST_no_step_size_restriction}
\end{figure}

\subsection{Comparison with the first-order Gautschi-type EWI}
In this subsection, we shall compare the performance of the first-order Lie-Trotter time-splitting method and the first-order Gautschi-type EWI (also known as the exponential Euler scheme \cite{ExpInt}) analyzed in \cite{bao2023_EWI} when applied to the NLSE with low regularity potential and nonlinearity. Since the EWI is a first-order scheme, here, we only present the comparison between the EWI and the first-order LTFS method \cref{full_discrete_LT}. 

According to our analysis before, for the time-splitting method, we only present the results computed at the fully discrete level by using Fourier spectral method for spatial discretization subject to the CFL-type time step size restriction $\tau < h^2/\pi$. While for the EWI, we present the results computed with fixed $h = h_e$ since the mesh size $h$ has almost no influence on the performance of the EWI as long as it is small enough (such that the spatial error is smaller than the temporal error). The comparison between the time-splitting method and the EWI both at the semi-discrete level (i.e. $\tau \gg h^2$) can be found in Section 5.3 of \cite{bao2023_EWI}. 

In \cref{fig:comp_poten} (a), we show the $L^2$-errors of using the LTFS and the EWI to solve the NLSE \cref{eq:NLSE_low_reg_poten} with $V(x) = V_1(x)$ defined in \cref{eq:poten}. In \cref{fig:comp_poten} (b), we show the $H^1$-errors of using the LTFS and the EWI to solve the NLSE \cref{eq:NLSE_low_reg_poten} with $V(x) = V_2(x)$ defined in \cref{eq:poten}. We can observe that both the LTFS and the EWI show optimal first-order convergence in each case, while the value of errors of the LTFS is smaller than those of the EWI. 

In \cref{fig:comp_nonl} (a), we show the $L^2$-errors of using the LTFS and the EWI to solve the NLSE \cref{eq:NLSE_low_reg_nonl} with $\sigma = 0.1$. In \cref{fig:comp_nonl} (b), we show the $H^1$-errors of using the LTFS and the EWI to solve the NLSE \cref{eq:NLSE_low_reg_nonl} with $\sigma = 0.5$ defined in \cref{eq:poten}. We can observe that both the LTFS and the EWI show optimal first-order convergence in each case, while the value of errors of the LTFS is smaller than those of the EWI. 

Based on the observation above, we can conclude that the time-splitting methods perform better than the EWI at fully discrete level under the CFL-type time step size restriction $\tau < h^2/\pi$. 

\begin{figure}[htbp]
	\centering
	{\includegraphics[width=0.475\textwidth]{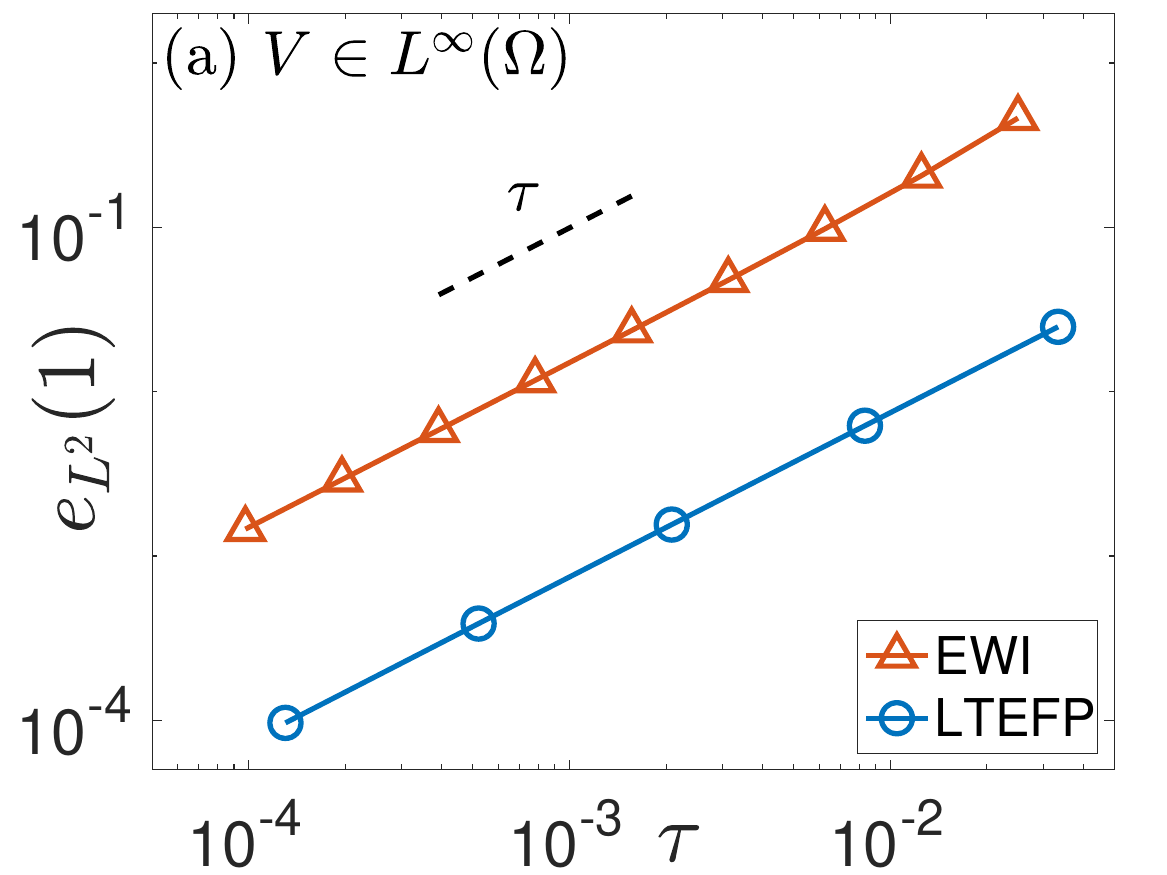}}\hspace{1em}
	{\includegraphics[width=0.475\textwidth]{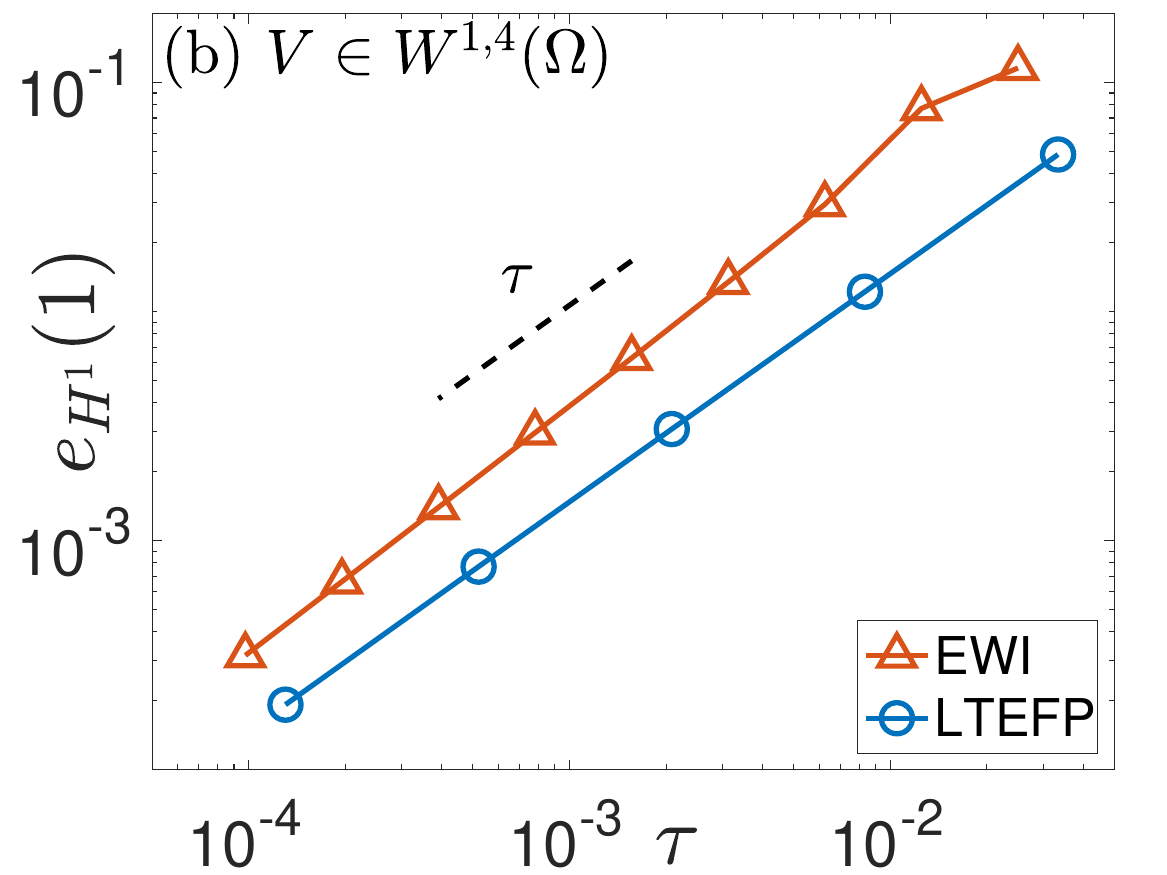}}
	\caption{Temporal errors of the LTFS method and the EWI for solving \cref{eq:NLSE_low_reg_poten}: (a) $V = V_1 \in L^\infty(\Omega)$ and (b) $V = V_2 \in W^{1, 4} (\Omega)$. }
	\label{fig:comp_poten}
\end{figure}

\begin{figure}[htbp]
	\centering
	{\includegraphics[width=0.475\textwidth]{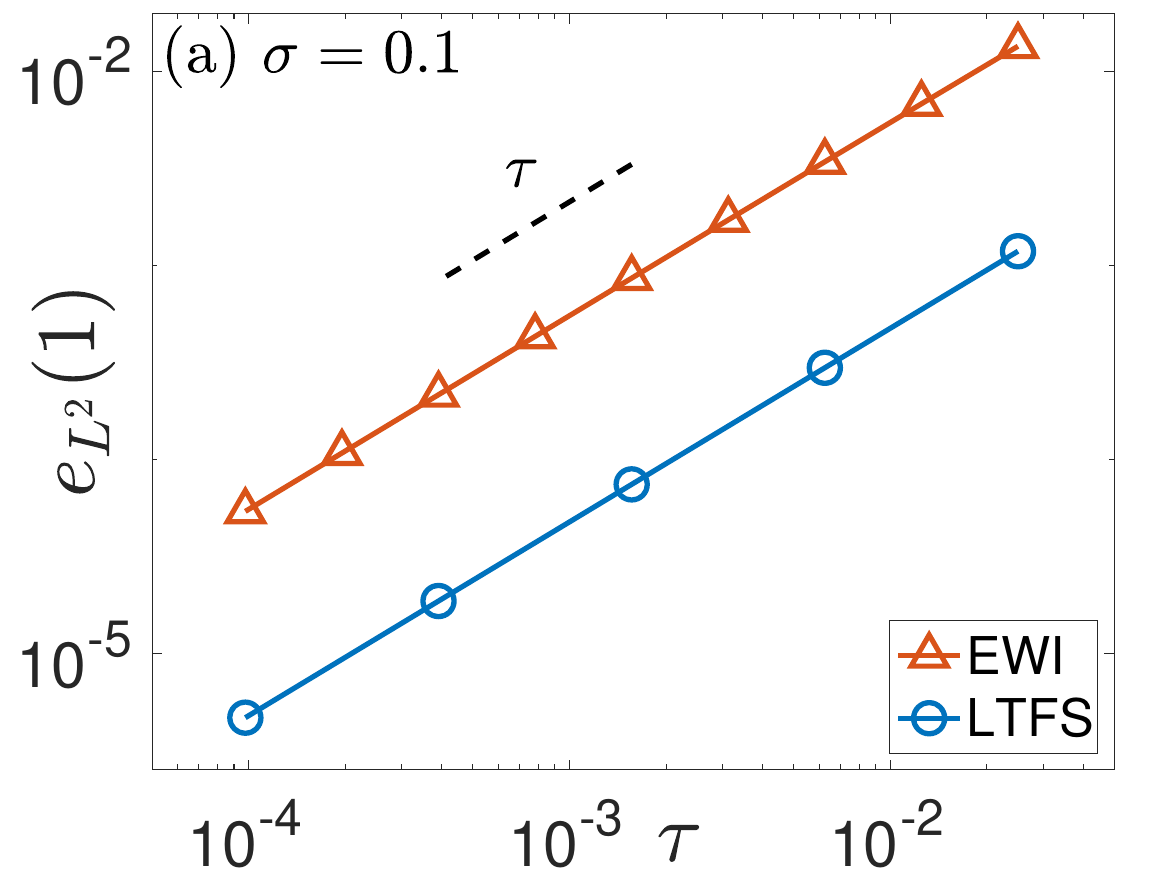}}\hspace{1em}
	{\includegraphics[width=0.475\textwidth]{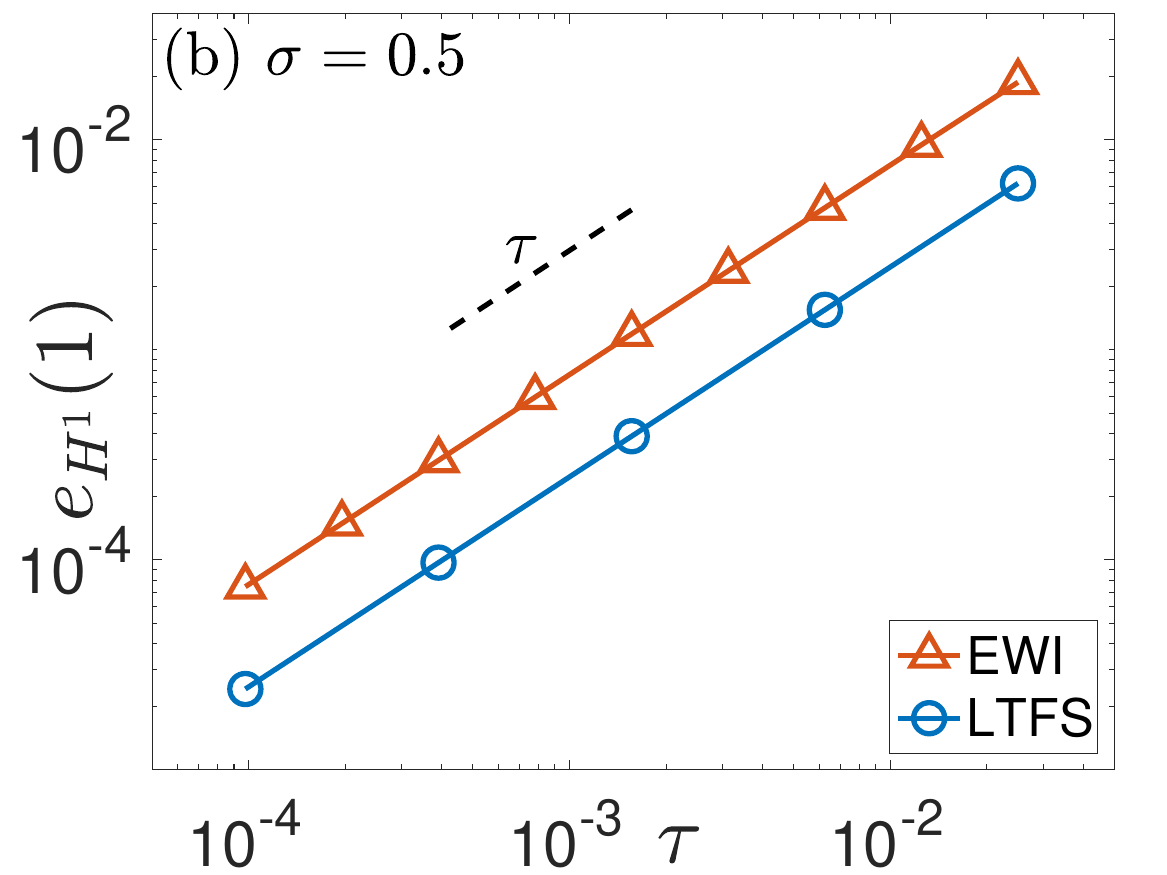}}
	\caption{Temporal errors of the LTFS method and the EWI for solving \cref{eq:NLSE_low_reg_nonl}: (a) $\sigma =  0.1$ and (b) $\sigma = 0.5$. }
	\label{fig:comp_nonl}
\end{figure}

\section{Conclusion}
We established optimal error bounds on time-splitting Fourier spectral methods for the nonlinear Schr\"odinger equation with low regularity potential $V$ and typical power-type nonlinearity $ f(\rho) = \rho^\sigma(\sigma>0) $. For the first-order Lie-Trotter time-splitting method, optimal $ L^2 $- and $ H^1 $-norm error bounds were established for $V \in L^\infty, \sigma > 0 $ and $ V \in W^{1, 4}, \sigma \geq 1/2 $, respectively. For the second-order Strang time-splitting method, optimal $ L^2 $- and $H^1$-norm error bounds were established for $V \in H^2, \sigma \geq 1 $ and $V \in H^3, \sigma \geq 3/2 (\text{or } \sigma=1) $, respectively. Compared to the error estimates of the time-splitting methods in the literature, our optimal error bounds require much weaker regularity on potential and nonlinearity for optimal convergence rates. 
Extensive numerical results were reported to validate our error estimates and to show that they are sharp. In particular, a CFL-type time step size restriction which is needed in our proof can be observed numerically in the presence of low regularity potential, indicating that it cannot be removed or improved. Furthermore, numerical results for various values of $\sigma>0$ suggest that the above threshold values of $ \sigma $ for optimal convergence orders are also sharp.


\begin{thebibliography}{10}
	\newcommand{\enquote}[1]{#1}
	
	\bibitem{FD}
	G.~D. Akrivis, \enquote{Finite difference discretization of the cubic
		{S}chr\"{o}dinger equation}, {\it IMA J. Numer. Anal.} \textbf{13} (1993)
	115--124.
	
	\bibitem{bronsard2022}
	Y.~Alama~Bronsard, \enquote{Error analysis of a class of semi-discrete schemes
		for solving the {G}ross-{P}itaevskii equation at low regularity}, {\it J.
		Comput. Appl. Math.} \textbf{418} (2023) 114632.
	
	\bibitem{tree2}
	Y.~Alama~Bronsard, Y.~Bruned and K.~Schratz, \enquote{Low regularity
		integrators via decorated trees}, arXiv: 2202.01171 (2022).
	
	\bibitem{Ant}
	X.~Antoine, W.~Bao and C.~Besse, \enquote{Computational methods for the
		dynamics of the nonlinear {S}chr\"{o}dinger/{G}ross-{P}itaevskii equations},
	{\it Comput. Phys. Commun.} \textbf{184} (2013) 2621--2633.
	
	\bibitem{QD3}
	G.~E. Astrakharchik and B.~A. Malomed, \enquote{Dynamics of one-dimensional
		quantum droplets}, {\it Phys. Rev. A} \textbf{98} (2018) 013631.
	
	\bibitem{review_2013}
	W.~Bao and Y.~Cai, \enquote{Mathematical theory and numerical methods for
		{B}ose-{E}instein condensation}, {\it Kinet. Relat. Models} \textbf{6} (2013)
	1--135.
	
	
	
	
	
	\bibitem{RCO_SE}
	W.~Bao, Y.~Cai and Y.~Feng, \enquote{Improved uniform error bounds of the
		time-splitting methods for the long-time (nonlinear) {S}chr\"{o}dinger
		equation}, {\it Math. Comp.} \textbf{92} (2023) 1109--1139.
	
	\bibitem{sinum2019}
	W.~Bao, R.~Carles, C.~Su and Q.~Tang, \enquote{Error estimates of a regularized
		finite difference method for the logarithmic {S}chr\"{o}dinger equation},
	{\it SIAM J. Numer. Anal.} \textbf{57} (2019) 657--680.
	
	\bibitem{bao2019}
	W.~Bao, R.~Carles, C.~Su and Q.~Tang, \enquote{Regularized numerical methods
		for the logarithmic {S}chr\"{o}dinger equation}, {\it Numer. Math.}
	\textbf{143} (2019) 461--487.
	
	\bibitem{bao2022}
	W.~Bao, R.~Carles, C.~Su and Q.~Tang, \enquote{Error estimates of local energy
		regularization for the logarithmic {S}chr\"{o}dinger equation}, {\it Math.
		Models Methods Appl. Sci.} \textbf{32} (2022) 101--136.
	
	
	
	
	\bibitem{bao2023_semi_smooth}
	W.~Bao and C.~Wang, \enquote{Error estimates of the time-splitting methods for
		the nonlinear schr\"odinger equation with semi-smooth nonlinearity}, 
	{\it Math. Comp.}, DOI: 10.1090/mcom/3900.
	
	\bibitem{bao2023_EWI}
	W.~Bao and C.~Wang, \enquote{Optimal error bounds on the exponential wave
		integrator for the nonlinear schr\"{o}dinger equation with low regularity
		potential and nonlinearity}, 
	{\it SIAM J. Numer. Anal.} (2023), to appear (arXiv: 2302.09262).
	
	\bibitem{BBD}
	C.~Besse, B.~Bid\'{e}garay and S.~Descombes, \enquote{Order estimates in time
		of splitting methods for the nonlinear {S}chr\"{o}dinger equation}, {\it SIAM
		J. Numer. Anal.} \textbf{40} (2002) 26--40.
	
	\bibitem{tree1}
	Y.~Bruned and K.~Schratz, \enquote{Resonance-based schemes for dispersive
		equations via decorated trees}, {\it Forum Math. Pi} \textbf{10} (2022) 1--76.
	
	\bibitem{QD2}
	C.~R. Cabrera, L.~Tanzi, J.~Sanz, B.~Naylor, P.~Thomas, P.~Cheiney and
	L.~Tarruell, \enquote{Quantum liquid droplets in a mixture of Bose-Einstein
		condensates}, {\it Science} \textbf{359} (2018) 301--304.
	
	\bibitem{cazenave2003}
	T.~Cazenave, {\em Semilinear {S}chr\"{o}dinger equations}, volume~10 of
	Courant Lecture Notes in Mathematics (New York University Courant Institute
	of Mathematical Sciences, 2003).
	
	\bibitem{SymEWI}
	E.~Celledoni, D.~Cohen and B.~Owren, \enquote{Symmetric exponential integrators
		with an application to the cubic {S}chr\"{o}dinger equation}, {\it Found.
		Comput. Math.} \textbf{8} (2008) 303--317.
	
	\bibitem{choi2021}
	W.~Choi and Y.~Koh, \enquote{On the splitting method for the nonlinear
		{S}chr\"{o}dinger equation with initial data in {$H^1$}}, {\it Discrete
		Contin. Dyn. Syst.} \textbf{41} (2021) 3837--3867.
	
	\bibitem{schratz2016}
	J.~Eilinghoff, R.~Schnaubelt and K.~Schratz, \enquote{Fractional error
		estimates of splitting schemes for the nonlinear {S}chr\"{o}dinger equation},
	{\it J. Math. Anal. Appl.} \textbf{442} (2016) 740--760.
	
	\bibitem{ESY}
	L.~Erd\H{o}s, B.~Schlein and H.-T. Yau, \enquote{Derivation of the cubic
		non-linear {S}chr\"{o}dinger equation from quantum dynamics of many-body
		systems}, {\it Invent. Math.} \textbf{167} (2007) 515--614.
	
	\bibitem{henning2017}
	P.~Henning and D.~Peterseim, \enquote{Crank-{N}icolson {G}alerkin
		approximations to nonlinear {S}chr\"{o}dinger equations with rough
		potentials}, {\it Math. Models Methods Appl. Sci.} \textbf{27} (2017)
	2147--2184.
	
	\bibitem{ExpInt}
	M.~Hochbruck and A.~Ostermann, \enquote{Exponential integrators}, {\it Acta
		Numer.} \textbf{19} (2010) 209--286.
	
	\bibitem{ignat2011}
	L.~I. Ignat, \enquote{A splitting method for the nonlinear {S}chr\"{o}dinger
		equation}, {\it J. Differential Equations} \textbf{250} (2011) 3022--3046.
	
	\bibitem{ji2023}
	L.~Ji, A.~Ostermann, F.~Rousset and K.~Schratz, \enquote{Low regularity error 
		estimates for the time integration of 2D NLS}, 2301.10639 (2023). 
	
	\bibitem{QD1}
	H.~Kadau, M.~Schmitt, M.~Wenzel, C.~Wink, T.~Maier, I.~Ferrier-Barbut and
	T.~Pfau, \enquote{Observing the rosensweig instability of a quantum
		ferrofluid}, {\it Nature} \textbf{530} (2016) 194--197.
	
	\bibitem{kato1987}
	T.~Kato, \enquote{On nonlinear {S}chr\"{o}dinger equations}, {\it Ann. Inst. H.
		Poincar\'{e} Phys. Th\'{e}or.} \textbf{46} (1987) 113--129.
	
	\bibitem{LRI_sinum}
	M.~Kn\"{o}ller, A.~Ostermann and K.~Schratz, \enquote{A {F}ourier integrator
		for the cubic nonlinear {S}chr\"{o}dinger equation with rough initial data},
	{\it SIAM J. Numer. Anal.} \textbf{57} (2019) 1967--1986.
	
	\bibitem{LHY}
	T.~D. Lee, K.~Huang and C.~N. Yang, \enquote{Eigenvalues and eigenfunctions of
		a Bose system of hard spheres and its low-temperature properties}, {\it Phys.
		Rev.} \textbf{106} (1957) 1135--1145.
	
	\bibitem{lubich2008}
	C.~Lubich, \enquote{On splitting methods for {S}chr\"{o}dinger-{P}oisson and
		cubic nonlinear {S}chr\"{o}dinger equations}, {\it Math. Comp.} \textbf{77}
	(2008) 2141--2153.
	
	\bibitem{splitting_low_reg}
	A.~Ostermann, F.~Rousset and K.~Schratz, \enquote{Error estimates at low
		regularity of splitting schemes for {NLS}}, {\it Math. Comp.} \textbf{91}
	(2021) 169--182.
	
	\bibitem{LRI_error}
	A.~Ostermann, F.~Rousset and K.~Schratz, \enquote{Error estimates of a
		{F}ourier integrator for the cubic {S}chr\"{o}dinger equation at low
		regularity}, {\it Found. Comput. Math.} \textbf{21} (2021) 725--765.
	
	\bibitem{LRI}
	A.~Ostermann and K.~Schratz, \enquote{Low regularity exponential-type
		integrators for semilinear {S}chr\"{o}dinger equations}, {\it Found. Comput.
		Math.} \textbf{18} (2018) 731--755.
	
	\bibitem{LRI_sec}
	A.~Ostermann, Y.~Wu and F.~Yao, \enquote{A second-order low-regularity
		integrator for the nonlinear {S}chr\"{o}dinger equation}, {\it Adv. Contin.
		Discrete Models} \textbf{23} (2022).
	
	\bibitem{LRI_fulldisc}
	A.~Ostermann and F.~Yao, \enquote{A fully discrete low-regularity integrator
		for the nonlinear {S}chr\"{o}dinger equation}, {\it J. Sci. Comput.}
	\textbf{91} (2022) 9--14.
	
	\bibitem{QD4}
	D.~S. Petrov and G.~E. Astrakharchik, \enquote{Ultradilute low-dimensional
		liquids}, {\it Phys. Rev. Lett.} \textbf{117} (2016) 100401.
	
	\bibitem{LRI_general}
	F.~Rousset and K.~Schratz, \enquote{A general framework of low regularity
		integrators}, {\it SIAM J. Numer. Anal.} \textbf{59} (2021) 1735--1768.
	
	\bibitem{poten_anderson}
	L.~Sanchez-Palencia, D.~Cl\'ement, P.~Lugan, P.~Bouyer, G.~V. Shlyapnikov and
	A.~Aspect, \enquote{Anderson localization of expanding Bose-Einstein
		condensates in random potentials}, {\it Phys. Rev. Lett.} \textbf{98} (2007)
	210401.
	
	\bibitem{book_spectral}
	J.~Shen, T.~Tang and L.-L. Wang, {\em Spectral Methods: Algorithms, Analysis
		and Applications}, volume~41 of {Springer Series in Computational
		Mathematics} (Springer, Heidelberg, 2011).
	
	\bibitem{NLS}
	C.~Sulem and P.-L. Sulem, {\em The Nonlinear {S}chr\"{o}dinger Equation:
		Self-Focusing and Wave Collapse}, Applied Mathematical Sciences (Springer,
	New York, 1999).
	
	\bibitem{poten_Josephson}
	I.~Zapata, F.~Sols and A.~J. Leggett, \enquote{Josephson effect between trapped
		Bose-Einstein condensates}, {\it Phys. Rev. A} \textbf{57} (1998) 28--31.
	
	\bibitem{zhao2021}
	X.~Zhao, \enquote{Numerical integrators for continuous disordered nonlinear
		{S}chr\"{o}dinger equation}, {\it J. Sci. Comput.} \textbf{89} (2021) 40--27.
	
\end{thebibliography}
\end{document}